\numberwithin{equation}{section}
\newcommand{\tp}{$(2{+})$}
\newcommand{\nat}{\mathbb{N}}
\newcommand{\intn}{\mathbb{Z}}
\newcommand{\realn}{\mathbb{R}}
\newcommand{\ratn}{\mathbb{Q}}
\newcommand{\divides}[2]{#1\vert #2}
\newcommand{\Gp}{\Gamma_0(2)^+}
\newcommand{\gindex}[1]{\hbox{\rm Index}\left( #1 \right)}
\newtheorem{Theorem}{Theorem}[section]
\newtheorem{Corollary}[Theorem]{Corollary}
\newtheorem{Lemma}[Theorem]{Lemma}
\newtheorem{Proposition}[Theorem]{Proposition}
 { \theoremstyle{definition}
\newtheorem{Definition}[Theorem]{Definition}

\newtheorem{Remark}[Theorem]{Remark} }
\begin{document}
\allowdisplaybreaks

\newcommand{\arXivNumber}{1710.01071}

\renewcommand{\thefootnote}{}

\renewcommand{\PaperNumber}{060}

\FirstPageHeading

\ShortArticleName{$(2+)$-Replication and the Baby Monster}

\ArticleName{$\boldsymbol{(2+)}$-Replication and the Baby Monster\footnote{This paper is a~contribution to the Special Issue on Moonshine and String Theory. The full collection is available at \href{https://www.emis.de/journals/SIGMA/moonshine.html}{https://www.emis.de/journals/SIGMA/moonshine.html}}}

\Author{Chris CUMMINS~$^\dag$ and Rodrigo MATIAS~$^\ddag$}

\AuthorNameForHeading{C.~Cummins and R.~Matias}

\Address{$^\dag$~Department of Mathematics and Statistics, Concordia University,\\
\hphantom{$^\dag$}~1455 de Maisonneuve Blvd Ouest, Montr\'eal, H3G 1M8, Qu\'ebec, Canada}
\EmailD{\href{mailto:chris.cummins@concordia.ca}{chris.cummins@concordia.ca}}

\Address{$^\ddag$~Departamento de Matem\'atica, Faculdade de Ci\^{e}ncias e Tecnologia,\\
\hphantom{$^\ddag$}~Universidade de Coimbra, Portugal}
\EmailD{\href{mailto:rmatias@mat.uc.pt}{rmatias@mat.uc.pt}}

\ArticleDates{Received October 04, 2017, in final form May 31, 2018; Published online June 16, 2018}

\Abstract{The definitions of replicable and completely replicable functions are intimately related to the Hecke operators for the modular group. We define the notions of ``$(2+)$-replicable'' and ``completely $(2+)$-replicable'' functions by considering the Hecke operators for $\Gamma_0(2)^+$. We prove that the McKay--Thompson series for $2\cdot\mathbb{B}$, as computed by H\"ohn, are completely $(2+)$-replicable.}

\Keywords{moonshine; baby monster; replication}

\Classification{11F22; 11F25}

\renewcommand{\thefootnote}{\arabic{footnote}}
\setcounter{footnote}{0}

\section{Introduction}

The monstrous moonshine conjectures of Conway and Norton \cite{CN}, led, via rapid developments in VOA theory \cite{B2, FLM} and generalized Kac--Moody algebras \cite{B3}, to Borcherds' proof of the conjectures \cite{B}. In this paper our aim is to generalize one aspect of this proof to the case of the baby monster, $\mathbb{B}$, namely the connection between an appropriate generalization of ``complete replication'' of modular functions and the power map structure of $2\cdot \mathbb{B}$.

To explain this connection, recall the monstrous case.
For this, Norton \cite{N2}, introduced the idea of replicable and completely replicable functions. A formal $q$-series $f=q^{-1}+c_1q+ c_2q^2 + \cdots$ with rational coefficients is said to be replicable if and only if there exist formal $q$-series $f^{(a)} = q^{-1} + c_1^{(a)}q + c_2^{(a)}q^2 + \cdots $, $a=1,2,3,\dots$ such that
\begin{gather}
\sum_{\substack{ad=n \\ 0\leq b<d}} f^{(a)}\left(\frac{a\tau+b}{d}\right)=P_{n,f}(f(q)), \label{repeqns}
\end{gather}
where $P_{n,f}(X)$ is the $n$-th Faber polynomial of~$f$. This is the unique polynomial such that $P_{n,f}(f(q))-\frac{1}{q^n}$ has only positive powers of~$q$.

The series $f^{(a)}$ are called the replicates of $f$. Norton conjectured that $f$ is replicable if and only if it either has the form $q^{-1}+ cq$ or is the $q$ expansion of a Haupmodul of certain congruence subgroups. A completely replicable $f$ is a replicable function for which $f^{(a)}$ is also replicable for all positive integers $a$. Koike showed that a certain set of modular functions, which includes those occurring in monstrous moonshine, are completely replicable and that their replicates agree with those predicted by the monstrous moonshine conjectures.

The Hauptmoduls for genus zero groups with rational integer coefficients are known to be replicable functions~\cite{CuN}. The genus zero congruence subgroups of ${\rm PSL}(2,\realn)$ have been classified in \cite{ChCm} and the completely replicable functions (with rational integer coefficients) were computed in~\cite{ACMS}.

The connection with the monster, $\mathbb{M}$, is that the trace functions of the action of $\mathbb{M}$ on $V^\natural$ are completely replicable as a consequence of the twisted denominator formula for the monster Lie algebra \cite{B, CN,FLM2}.

Thus we have the surprising fact that the relatively simple characterization of completely replicable
functions in a sense captures the power map structure of the monster.

These facts have been generalized. In particular Norton \cite{N} introduced generalized moonshine
by associating functions to pairs of commuting elements in $\mathbb{M}$. There has been considerable
progress in understanding this phenomenon and Carnahan has announced a proof of the
generalized moonshine conjectures \cite{carn,carn2,carn4}.

\looseness=-1 Here we take a complementary approach which aims to understand and extend complete replicability to the case of the baby monster. The idea is that the replication equations (\ref{repeqns}) were discovered by Conway and Norton by modifying the Hecke operators for ${\rm PSL}(2,\intn)$ to take into account the conjectured trace functions on classes of $\mathbb{M}$ other than the class of the identity. Instead we start here with the Hecke operators for $\Gp$ and then look for a natural way to introduce replication and complete replication in this context (see \cite{CN} for notation). The motivation is that $2\cdot\mathbb{B}$ is the centralizer in $\mathbb{M}$ of an element from the conjugacy class labeled $2A$ in the atlas and in monstrous moonshine the class $2A$ is associated with the Hauptmodul for $\Gamma_0(2)^+$.

This is done in Section \ref{heckeandrep} and we find a different form of replicability and prove that it reflects the power map structure in $2\cdot\mathbb{B}$. We call it \tp-replicability as it is motivated by the Hecke operators of $\Gp$ (which is denoted by \tp in Conway and Norton's notation). Although the functional equations we use are implicit in the work of Carnahan \cite{carn,carn2, carn4} and Borcherds \cite[Section~10]{B}, we find that they have a natural interpretation as \tp-replication identities.

Our main motivation for this approach is that H\"ohn \cite{H} has given a proof of the baby moonshine conjectures which closely parallels Borcherds' monstrous proof, but with a method which uses replicability rather than complete replicability. Introducing complete \tp-replicability allows us to modify part of H\"ohn's proof so as to clone Borcherds' use of complete replicability in his monstrous proof. To do this, in Section \ref{CRHauptmoduls} we show that a large class of Hauptmoduls are completely \tp-replicable by working out candidate replicates. This class includes all but~13 of the Hauptmoduls which occur in moonshine for the baby monster. In Section~\ref{mahler} we prove that a completely \tp-replicable function is completely determined by the first $5$ coefficients of the function and its replicates. In the final Section \ref{BMandrep} we prove that the baby monster McKay--Thompson series are completely \tp-replicable with \tp-replicability respecting the power map structure in $2\cdot\mathbb{B}$. This is sufficient to establish that McKay--Thompson series are Hauptmoduls once their first 5 coefficients are calculated, except for the 13 exceptions noted above. For these 13 cases our proof is similar that of H\"ohn, but simplified somewhat since we have additional recurrence relations satisfied by the McKay--Thompson series. This establishes, incidentally, that these exceptions are also \tp-replicable~-- however we have not found a way to include them in the cases covered in Section~\ref{CRHauptmoduls}.

More generally, it is our belief that the notions of \tp-replicability and complete \tp-replicability will lead to a better understanding of the connections between moonshine and Hecke operators.

\section{Hecke operators and replication}\label{heckeandrep}
Some references for the background material needed for this section are: Chapter~5 in~\cite{DSh} for Hecke operators and~\cite{CN} for the basics of moonshine and the concept of replicability.

\looseness=-1 Let $G$ be a discrete subgroup of ${\rm PSL}(2,\realn)$ which is commensurable with ${\rm PSL}(2,\intn)$. We consider $\mathcal{F}(G)$ the set of functions meromorphic on the upper half plane and at cusps that are invariant under the action of $G$. For each element $\alpha\in {\rm PSL}(2,\realn)$ in the commensurator of $G$ we define a~Hecke operator $ T_\alpha\colon \mathcal{F}(G)\longrightarrow \mathcal{F}(G)$ in the following way. Consider a decomposition $G\alpha G= \bigcup\limits_{j=1}^n G\gamma_j$ as a disjoint union, guaranteed to be finite as $\alpha$ is in the commensurator of $G$. Define
\begin{gather*}
T_\alpha f(z)= \sum_{j=1}^n f(\gamma_j z).
\end{gather*}

An important example is given by the Hecke operators of $G=\Gamma = {\rm PSL}(2,\intn)$ (see, for example, \cite[pp.~60--63]{Shi}. Let $\Delta = \{ \alpha \in M_2(\intn) \,|\, \det(\alpha) > 0\}$ and let ${ \tilde{\mathbb T}}_m$ be the sum over all double cosets $\Gamma \alpha \Gamma$ with $\alpha\in \Delta$ and $\det(\alpha) = m$. Also define ${\mathbb T}_{a,d}$ to be the double coset $\Gamma \left[\begin{smallmatrix} a&0\\0&d\end{smallmatrix}\right]\Gamma $ and set ${\mathbb T}_{m} = {\mathbb T}_{1,m}$. From the theory of elementary divisors we then have formula ${\mathbb {\tilde T}}_m = \sum {\mathbb T}_{a,d}$ where the sum is over all positive $a$ and $d$ such that $ad = m$ such that $a$ divides $d$. As explained above, the Hecke action of these (distinct) double cosets on modular functions is obtained by expressing the double cosets as a union of left cosets (by a slight abuse of notation we use the same symbol for both a sum of double cosets and the corresponding operator). For example, if~$f$ is a modular function for $\Gamma$ then
\begin{gather}
 \tilde{\mathbb T}_4f(\tau) = {\mathbb T}_{1,4}f(\tau) + {\mathbb T}_{2,2}f(\tau)= f(4\tau) + f\left(\frac{2\tau}{2}\right) + f\left(\frac{2\tau+1}{2}\right) \nonumber\\
\hphantom{\tilde{\mathbb T}_4f(\tau) =}{} + f\left(\frac{\tau}{4}\right) + f\left(\frac{\tau+1}{4}\right) + f\left(\frac{\tau+2}{4}\right) + f\left(\frac{\tau+3}{4}\right). \label{t4example}
\end{gather}

The relationship to the Conway--Norton replication formulas of ``classical'' moonshine is as follows. When $f=f_g$ is a Thompson--McKay series for an element $g$ in the monster, the Conway--Norton replication formula corresponding to (\ref{t4example}) is
\begin{gather}
 f^{(4)}(4\tau) + f^{(2)}\left(\frac{2\tau}{2}\right) + f^{(2)}\left(\frac{2\tau+1}{2}\right) + f\left(\frac{\tau}{4}\right)\nonumber\\
 \qquad{} + f\left(\frac{\tau+1}{4}\right) + f\left(\frac{\tau+2}{4}\right) + f\left(\frac{\tau+3}{4}\right)=P_{4,f}(f) , \label{t4rep}
\end{gather}
where $P_{4,f}(f)$ is the 4th.~Faber polynomial of $f$ and the functions $f^{(4)}=f_{g^4}$ and $f^{(2)}=f_{g^2}$ are the Thompson--McKay series for the monstrous elements $g^4$ and $g^2$ respectively. Thus the left side of this replication formula is a modification of the Hecke action of $ {\tilde{\mathbb T}}_4$. So in this sense the Conway--Norton replication identities say that the result of a modified Hecke operator acting on a Thompson--McKay series is equal to a Faber polynomial in that series.

We will show that a similar phenomenon occurs if we replace the monster by $2\cdot\mathbb{B}$ and ${\rm PSL}(2,\intn)$ by $\Gp$. For the $\Gp$ case we set
\begin{gather*}
\Delta_2 = \left\{ \begin{pmatrix}a & b\\ 2c & d\end{pmatrix} |\, a,b,c,d\in\intn,\, ad-2bc>0 \right\}
\end{gather*}
and let
\begin{gather}\label{Tm}
 {\bf \tilde{T}}_m = \sum_{\substack{\alpha \in \Delta_2\\ \det(\alpha) = m}} \Gp \alpha \Gp.
\end{gather}
Also define ${\bf T}_{a,d}$ to be the double coset $\Gp \left[\begin{smallmatrix} a&0\\0&d\end{smallmatrix}\right]\Gp $ and ${\bf T}_{m} = {\bf T}_{m,1}$. As we will see ${\bf \tilde{T}}_m = \sum {\bf T}_{a,d}$ where in this case the sum is over all positive integers~$a$ and~$d$ such that~$a$ divides~$d$ and either $ad=m$ or $ad=m/2$ (the second option being absent if $m$ is odd).

As for the modular group we have an action on functions. For example
\begin{gather}
 {\bf \tilde{T}}_2f(\tau) = {\bf T}_{1,2}f(\tau) + {\bf T}_{1,1}f(\tau)\nonumber \\
\hphantom{{\bf \tilde{T}}_2f(\tau)}{} = f(2\tau) + f(\tau) + f\left(\tau +\frac{1}{2}\right) + f\left(\frac{\tau}{2}\right) + f\left(\frac{\tau+1}{2}\right), \label{BabyT2}
\end{gather}
where we have used Proposition \ref{cosetdecomp} below. As we will see, there is a ``replication formula'' for $2\cdot\mathbb{B}$ which involves a modification of (\ref{BabyT2}) in the same way that (\ref{t4rep}) involves a modification of~(\ref{t4example}). It is
\begin{gather}
 f^{[2]}(2\tau) + f^{[\sqrt{2}]}(\tau) + f^{[\sqrt{2}]}\left(\tau +\frac{1}{2}\right) + f\left(\frac{\tau}{2}\right) + f\left(\frac{\tau+1}{2}\right)=P_{2,f}(f). \label{Babyrep}
\end{gather}

The novelty here is not in the existence of these identities, as mentioned above they are implicit in previous work. Nor is it the existence of the \tp-replicates, which, as we shall see, are trace functions on appropriate VOA-modules. Rather we believe that the key points are firstly that a form of complete replication is restored by introducing ``half step'' replicates as in (\ref{Babyrep}). So, for example, $\big(f^{[\sqrt{2}]}\big)^{[\sqrt{2}]} = f^{[2]}$ (which explains our chosen normalization for the exponents of \tp-replicates). The absence of complete replicability is a complicating factor for groups other than the monster and introducing complete \tp-replicability means that we can give a modified version of H\"ohn's $2\cdot\mathbb{B}$ proof which is closer to Borcherds' monstrous proof. The second key point is that this approach emphasizes a close connection between replication identities and Hecke algebras of groups other than ${\rm PSL}(2,\intn)$. A connection which, we believe, may prove fruitful.

The purpose of the next section is to define \tp-replicability based on the Hecke operators for $\Gp$. We then define complete \tp-replicability and we will see that the set of completely \tp-replicable functions includes the McKay--Thompson series for the baby monster group.

\subsection[Hecke operators for $\Gp$]{Hecke operators for $\boldsymbol{\Gp}$}\label{gamma2}

In this subsection we find expressions for the Hecke operators of $\Gp$. These will be used in Section~\ref{2Arepl} to motivate the definition of \tp-replication.

Let $m$ be a positive integer. We define the following sets
\begin{gather}
M_1^m = \left\{ \begin{bmatrix} x & y \\ 0 & z\end{bmatrix} |\,
 xz=m,\, 0\leq y < z,\, \gcd(x,y,z) = 1,\, x\ \hbox{odd} \right\}, \nonumber\\
S_1^m = \left\{ \begin{bmatrix} x & y \\ 0 & z\end{bmatrix} |\,
 xz=m,\, 0\leq y < z,\, \gcd(x,y,z) = 1,\, z\ \hbox{odd} \right\}, \nonumber \\
S_2^m = \left\{ 2^{-1/2}\begin{bmatrix} x & y \\ 0 & z\end{bmatrix} |\,
 xz=2m,\, 0\leq y < z,\, \gcd(x,y,z) = 1,\, x,z\ \hbox{even} \right\},\nonumber \\
 M_2^m = S_1^m \cup S_2^m, \qquad
 M^m = M_1^m \cup M_2^m.\label{MSdefinitions}
\end{gather}

The main result of this subsection is the following

\begin{Proposition}\label{cosetdecomp}
 Let $m$ be an even positive integer then we have the following
\begin{enumerate}\itemsep=0pt
 \item[$1)$] $ \Gp\left[\begin{smallmatrix} 1 & 0 \\ 0 & m \end{smallmatrix}\right] \Gamma_0(2)
 = \bigcup_{\gamma\in M^m_1}\Gp \gamma $,
 \item[$2)$] $ \Gp\left[\begin{smallmatrix} m & 0 \\ 0 & 1 \end{smallmatrix}\right] \Gamma_0(2)
 = \bigcup_{\gamma\in M^m_2}\Gp \gamma $,
 \item[$3)$] $ \Gp\left[\begin{smallmatrix} m & 0 \\ 0 & 1 \end{smallmatrix}\right] \Gp
 =
 \Gp\left[\begin{smallmatrix} 1 & 0 \\ 0 & m \end{smallmatrix}\right] \Gp =
 \bigcup_{\gamma\in M^m}\Gp \gamma$.
\end{enumerate}
\end{Proposition}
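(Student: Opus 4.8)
The plan is to establish parts 1) and 2) directly and then obtain part 3) from them by means of the Fricke involution $w=\tfrac{1}{\sqrt{2}}\left[\begin{smallmatrix}0&-1\\2&0\end{smallmatrix}\right]$, the element adjoined to $\Gamma_0(2)$ to form $\Gp$, so that $\Gp=\Gamma_0(2)\cup\Gamma_0(2)\,w$ and $w$ normalizes $\Gamma_0(2)$. The first observation is that in ${\rm PSL}(2,\realn)$ one has $w\left[\begin{smallmatrix}1&0\\0&m\end{smallmatrix}\right]w=\left[\begin{smallmatrix}m&0\\0&1\end{smallmatrix}\right]$; since $w\in\Gp$ this gives at once the asserted equality $\Gp\left[\begin{smallmatrix}m&0\\0&1\end{smallmatrix}\right]\Gp=\Gp\left[\begin{smallmatrix}1&0\\0&m\end{smallmatrix}\right]\Gp$ of the two double cosets in 3). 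For the right-coset decomposition in 3) I would expand the right-hand factor $\Gp=\Gamma_0(2)\cup\Gamma_0(2)\,w$ and use the same relation together with $w\Gamma_0(2)=\Gamma_0(2)w$ to rewrite $\Gp\left[\begin{smallmatrix}1&0\\0&m\end{smallmatrix}\right]\Gamma_0(2)\,w=\Gp\left[\begin{smallmatrix}m&0\\0&1\end{smallmatrix}\right]\Gamma_0(2)$, giving
\begin{gather*}
\Gp\left[\begin{smallmatrix}1&0\\0&m\end{smallmatrix}\right]\Gp
=\Gp\left[\begin{smallmatrix}1&0\\0&m\end{smallmatrix}\right]\Gamma_0(2)\ \cup\ \Gp\left[\begin{smallmatrix}m&0\\0&1\end{smallmatrix}\right]\Gamma_0(2).
\end{gather*}
Thus part 3) reduces to parts 1) and 2) together with the fact that $M_1^m$ and $M_2^m$ index disjoint families of right $\Gp$-cosets, which is clear since $M_1^m$ consists of integral matrices with $x$ odd, while $M_2^m$ consists either of integral matrices with $x$ even or of genuinely $2^{-1/2}$-scaled matrices.

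For parts 1) and 2) I would reduce an arbitrary element of the double coset to a normal form. Every member is a rational matrix of determinant $m$ (after the $2^{-1/2}$ normalization). First, using left multiplication by $\Gp$ I would clear the lower-left entry, bringing the matrix to upper-triangular shape $\left[\begin{smallmatrix}x&y\\0&z\end{smallmatrix}\right]$ or $2^{-1/2}\left[\begin{smallmatrix}x&y\\0&z\end{smallmatrix}\right]$; this is possible because $\Gp$ acts transitively on primitive bottom rows, the Fricke involution identifying the two cusps $\infty$ and $0$ of $\Gamma_0(2)$. Then right multiplication by powers of $\left[\begin{smallmatrix}1&1\\0&1\end{smallmatrix}\right]\in\Gamma_0(2)$ normalizes the upper-right entry to the range $0\le y<z$. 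The condition $\gcd(x,y,z)=1$ records that the Smith normal form is $\left[\begin{smallmatrix}1&0\\0&m\end{smallmatrix}\right]$ rather than $\left[\begin{smallmatrix}g&0\\0&m/g\end{smallmatrix}\right]$ with $g>1$, i.e.\ that the reduced matrix genuinely lies in the double coset of $\left[\begin{smallmatrix}1&0\\0&m\end{smallmatrix}\right]$ and not in one with larger elementary divisors. Distinctness of the resulting cosets I would check by verifying that if two normalized representatives $\gamma_1,\gamma_2$ satisfy $\gamma_1\gamma_2^{-1}\in\Gp$ then $\gamma_1=\gamma_2$, using that an upper-triangular element of $\Gp$ is a scalar together with the range $0\le y<z$ and the $\gcd$/parity constraints.

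The parity conditions and the split $M_2^m=S_1^m\cup S_2^m$ are where the level-$2$ structure enters, and I expect them to be the main obstacle. They track where the factor of $2$ contributed by the even integer $m$ can be placed: whether the triangularization of the previous step can be performed within $\Gamma_0(2)$, keeping the representative integral, or genuinely requires $w$, introducing the factor $2^{-1/2}$. This is governed by the $2$-adic direction of the relevant column, i.e.\ by which cusp class of $\Gamma_0(2)$ it represents. For $\left[\begin{smallmatrix}1&0\\0&m\end{smallmatrix}\right]$ the even part of $m$ is forced into the lower-right entry $z$, so that $x$ is odd, and one obtains exactly the integral family $M_1^m$; for $\left[\begin{smallmatrix}m&0\\0&1\end{smallmatrix}\right]$ the cosets bifurcate, those admitting an integral upper-triangular representative having $z$ odd and giving $S_1^m$, while those whose triangularization requires $w$ become scaled by $2^{-1/2}$, with $x,z$ both even and $xz=2m$, and give $S_2^m$. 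The bulk of the work is then precisely this $2$-adic bookkeeping: proving that the stated parity conditions are necessary and sufficient, that $S_1^m$ and $S_2^m$ are exhaustive and disjoint, and that, with the normalization $0\le y<z$, each right $\Gp$-coset of the double coset is hit exactly once.
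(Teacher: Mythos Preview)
Your reduction of part 3) to parts 1) and 2) via the Fricke involution $w$ and the splitting $\Gp=\Gamma_0(2)\cup\Gamma_0(2)\,w$ is exactly what the paper does.

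For parts 1) and 2), your approach differs from the paper's and contains a slip. You propose to triangularize via left multiplication by $\Gp$ and then normalize $y$ by ``right multiplication by powers of $\left[\begin{smallmatrix}1&1\\0&1\end{smallmatrix}\right]$''. This is wrong on two counts: right multiplication by a translation sends $y\mapsto y+kx$, reducing $y$ modulo $x$ rather than $z$; and, more seriously, right multiplication changes the left $\Gp$-coset, so it cannot be used to normalize a coset representative. You need \emph{left} multiplication by $\left[\begin{smallmatrix}1&k\\0&1\end{smallmatrix}\right]\in\Gp$, which sends $y\mapsto y+kz$ while staying in the same coset. (Relatedly, an upper-triangular element of $\Gp$ is a translation, not a scalar.) Your Smith-normal-form justification for the $\gcd$ condition is also not quite apt, since the right-hand group is $\Gamma_0(2)$ rather than ${\rm SL}_2(\mathbb Z)$.

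These slips aside, the paper takes a different and cleaner route. For part 1) it avoids your ``2-adic bookkeeping'' entirely by a counting argument: it shows (i) the cosets $\Gp\gamma$ for $\gamma\in M_1^m$ are distinct; (ii) they are closed under right multiplication by $\Gamma_0(2)$, checked on the two generators $\left[\begin{smallmatrix}1&1\\0&1\end{smallmatrix}\right]$ and $\left[\begin{smallmatrix}1&0\\2&1\end{smallmatrix}\right]$; and (iii) $|M_1^m|$ equals the index $\big[\Gamma_0(2):\Gamma_0(2)\cap\alpha^{-1}\Gp\alpha\big]$ with $\alpha=\left[\begin{smallmatrix}1&0\\0&m\end{smallmatrix}\right]$, which is the number of left $\Gp$-cosets in the double coset. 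Since $\alpha\in M_1^m$, the union $\bigcup_{\gamma\in M_1^m}\Gp\gamma$ contains the double coset and by the count must equal it. For part 2), the paper does not argue directly: it shows $|M_1^m|=|M_2^m|$ by an elementary count, then proves $\bigcup_{\gamma\in M_1^m}\Gp\gamma=\bigcup_{\gamma\in M_2^m}\Gp\gamma\, w$ by explicitly reducing each $\gamma w$, $\gamma\in M_2^m$, to $M_1^m$ via left multiplication by a single element of $\Gamma_0(2)$. Part 2) then follows from part 1) and $w^{-1}\alpha w=\left[\begin{smallmatrix}m&0\\0&1\end{smallmatrix}\right]$. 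Your plan to handle $S_1^m$ and $S_2^m$ separately by a direct normal-form analysis is workable in principle, but you have only identified where the difficulty lies without resolving it; the paper's detour through $w$ is shorter.
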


The proof will proceed by several lemmas. Until Proposition \ref{oddcosetdecomp}, $m$ will always be a positive even integer.

\begin{Lemma} \label{distinctlemma} The cosets $\Gp\gamma$ for $\gamma$ in $M^m$ are distinct.
\end{Lemma}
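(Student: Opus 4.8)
The plan is to translate the statement into a question about single group elements: the left cosets $\Gp\gamma_1$ and $\Gp\gamma_2$ coincide exactly when $\gamma_2\gamma_1^{-1}\in\Gp$, so it suffices to show that $g\gamma_1=\gamma_2$ with $g\in\Gp$ and $\gamma_1,\gamma_2\in M^m$ forces $\gamma_1=\gamma_2$. The first step is to record the explicit shape of the two families of elements of $\Gp=\Gamma_0(2)\cup\Gamma_0(2)W_2$, where $W_2=\tfrac1{\sqrt2}\left[\begin{smallmatrix}0&-1\\2&0\end{smallmatrix}\right]$. A \emph{type-1} element is an integer matrix $\left[\begin{smallmatrix}a&b\\2c&d\end{smallmatrix}\right]$ with $ad-2bc=1$, while a \emph{type-2} element has the form $\tfrac1{\sqrt2}\left[\begin{smallmatrix}2b&-a\\2d&-2c\end{smallmatrix}\right]$ with $ad-2bc=1$; here $ad=1+2bc$ is odd, so $a$ and $d$ are both odd and in particular the lower-left entry $2d$ of a type-2 element is \emph{nonzero}. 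I would also note at the outset that $M_1^m$, $S_1^m$, $S_2^m$ are pairwise disjoint as sets of matrices: a matrix in $M_1^m\cap S_1^m$ would have $x$ and $z$ both odd, whence $m=xz$ is odd, contradicting that $m$ is even; and a nonzero $\tfrac1{\sqrt2}$-scaled matrix can never be integral. Thus $M^m$ is genuinely indexed by distinct matrices, and one really must compare cosets.

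The argument then splits according to the type of $g$. The crucial simplification is the case $g$ type-2: for any upper-triangular $\gamma_1=\left[\begin{smallmatrix}x_1&y_1\\0&z_1\end{smallmatrix}\right]$ the lower-left entry of $g\gamma_1$ equals $\sqrt2\,d\,x_1$ (and $d\,x_1$ when $\gamma_1$ itself carries the $\tfrac1{\sqrt2}$ factor), which is nonzero since $d$ and $x_1=m/z_1$ are nonzero. But every $\gamma_2\in M^m$ is a scalar times an upper-triangular matrix and so has vanishing lower-left entry. Hence $g\gamma_1\ne\gamma_2$, and \emph{no} type-2 element of $\Gp$ can identify two of the listed representatives. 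This single observation disposes of all the Fricke contributions at once.

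It remains to treat $g$ type-1, which is the classical situation. Comparing lower-left entries in $g\gamma_1=\gamma_2$ forces the lower-left entry $2c$ of $g$ to vanish, so $c=0$, $ad=1$, and hence $g=\left[\begin{smallmatrix}1&b\\0&1\end{smallmatrix}\right]$ up to sign. Then $\gamma_2=\left[\begin{smallmatrix}x_1&y_1+bz_1\\0&z_1\end{smallmatrix}\right]$ (times the common scalar), and the normalization $0\le y<z$ gives $b=0$, so $\gamma_1=\gamma_2$ as matrices, exactly as in the $\mathrm{PSL}(2,\intn)$ computation. Because $\gamma_1$ and $\gamma_2$ are then literally equal, they lie in the same one of $M_1^m,S_1^m,S_2^m$ by the disjointness above; in particular an $M_1^m$ representative can never be $\Gp$-equivalent to an $S_1^m$ one, since equality would place the common matrix in the empty set $M_1^m\cap S_1^m$. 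Combining the type-1 and type-2 analyses covers every pair $(\gamma_1,\gamma_2)$.

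I expect the main obstacle to be the clean handling of the Fricke (type-2) elements: one must pin down the exact parity pattern of the matrices in $\Gamma_0(2)W_2$ and verify that their lower-left entry is always nonzero, since it is precisely this fact — together with the evenness of $m$, which separates $M_1^m$ from $S_1^m$ — that prevents the three families from collapsing into one another under $\Gp$. The within-type comparisons are routine adaptations of the standard Hecke coset argument; the genuine work is the bookkeeping showing that $W_2$ cannot be used to identify two of the listed representatives.
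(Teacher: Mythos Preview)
Your argument is correct and is essentially the same as the paper's, just unpacked into explicit matrix computations. The paper compresses your type-1/type-2 case split into a single observation: since every $\gamma\in M^m$ is a scalar multiple of an upper-triangular matrix, $r=\gamma'\gamma^{-1}$ fixes $\infty$, and the stabiliser of $\infty$ in $\Gp$ is exactly $\big\{\pm\left[\begin{smallmatrix}1&k\\0&1\end{smallmatrix}\right]\big\}$ --- which at once excludes the Fricke coset and reduces to the translation calculation you carry out.
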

\begin{proof} For any $\gamma$ and $\gamma'$ in $M^m$ we have that $r = \gamma'\gamma^{-1}$ fixes $\infty$. So if $r$ is in $\Gp$ it must be of the form $\pm\left[\begin{smallmatrix} 1 & k \\ 0 & 1\end{smallmatrix}\right]$. A~short calculation now shows that $\gamma=\gamma'$.
 \end{proof}

In what follows, for a positive integer $n$ we define $\psi(n)=n \prod\limits_{\substack{p|n \\ p\ \text{prime}}} \big(1-\frac{1}{p}\big)$.

 \begin{Lemma} \label{indexlemma} Let $m = 2^ab$ with $a> 0$ and $b$ odd. Then we have
 \begin{gather*}
 \big\vert M_1^m \big \vert = 2^a\psi(b)
 = \gindex{\Gamma_0(2),\Gamma_0(2)\cap
 \begin{bmatrix} 1 & 0 \\0 & m\end{bmatrix}^{-1}
 \Gp
 \begin{bmatrix} 1 & 0 \\0 & m\end{bmatrix}}.
 \end{gather*}
 \end{Lemma}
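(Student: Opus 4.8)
I would prove the two asserted equalities, $|M_1^m| = 2^a\psi(b)$ and $2^a\psi(b) = [\Gamma_0(2):\Gamma_0(2)\cap\beta^{-1}\Gp\beta]$, separately, where I abbreviate $\beta = \left[\begin{smallmatrix}1&0\\0&m\end{smallmatrix}\right]$. Since this lemma is one of the ingredients used to prove Proposition~\ref{cosetdecomp}, I cannot appeal to that proposition (which would be circular); both the cardinality of $M_1^m$ and the group index must be computed from scratch.

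For the cardinality, note that a matrix $\left[\begin{smallmatrix}x&y\\0&z\end{smallmatrix}\right]\in M_1^m$ satisfies $xz = m = 2^a b$ with $x$ odd, so the full power $2^a$ divides $z$; equivalently $x$ runs over the divisors of $b$ and $z = 2^a(b/x)$. For a fixed admissible pair $(x,z)$ the constraint $\gcd(x,y,z)=1$ reads $\gcd(g,y)=1$ with $g=\gcd(x,z)$, and since $g\mid z$ the admissible residues in $[0,z)$ number
\begin{gather*}
\#\big\{\, y : 0\le y<z,\ \gcd(x,y,z)=1 \,\big\} = \frac{z}{g}\,\varphi(g),
\end{gather*}
where $\varphi$ is Euler's totient. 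Summing over the factorizations and extracting the common factor $2^a$ expresses $|M_1^m|$ as $2^a$ times a sum over the divisors of $b$; this sum is multiplicative in $b$, and on a prime power $b = p^k$ an elementary computation (separating the two terms with $g=1$ from the rest) gives $p^k + p^{k-1}$. Hence $|M_1^m| = 2^a\psi(b)$.

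For the index I would identify the subgroup explicitly. Using the Atkin--Lehner description $\Gp = \Gamma_0(2)\cup\Gamma_0(2)w_2$ with $w_2 = \tfrac1{\sqrt2}\left[\begin{smallmatrix}0&-1\\2&0\end{smallmatrix}\right]$, conjugation by $\beta$ sends $\gamma = \left[\begin{smallmatrix}a&b\\c&d\end{smallmatrix}\right]\in\Gamma_0(2)$ to $\beta^{-1}\gamma\beta = \left[\begin{smallmatrix}a&bm\\c/m&d\end{smallmatrix}\right]$, which lies in $\Gamma_0(2)$ exactly when $2m\mid c$; thus $\Gamma_0(2)\cap\beta^{-1}\Gamma_0(2)\beta = \Gamma_0(2m)$. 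The remaining point is that the Fricke coset contributes nothing to the intersection: conjugating any $\gamma\in\Gamma_0(2)w_2$ by $\beta$ retains the scalar $\tfrac1{\sqrt2}$, so $\beta^{-1}\gamma\beta$ has an irrational entry unless the corresponding integer numerators all vanish, and the determinant condition rules out that degenerate possibility. Therefore $\Gamma_0(2)\cap\beta^{-1}\Gp\beta = \Gamma_0(2m)$, and by the classical formula for the index of $\Gamma_0(N)$ in $\mathrm{SL}(2,\intn)$ we get $[\Gamma_0(2):\Gamma_0(2m)] = \psi(2m)/\psi(2) = 2^a\psi(b)$.

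The main obstacle is this last vanishing claim for the Fricke coset: one must argue carefully that no Atkin--Lehner element can be conjugated back into $\Gamma_0(2)$ by $\beta$, which is exactly where the hypothesis $a>0$ (so that $m$ is even) and the $\sqrt2$-normalization of $w_2$ enter. By contrast, the two multiplicative/prime-power verifications in the counting step and the final application of the $\Gamma_0(N)$ index formula are routine.
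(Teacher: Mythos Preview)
Your approach matches the paper's: reduce the count of $M_1^m$ to $2^a$ times the familiar $\psi(b)$ count of primitive upper-triangular matrices of determinant $b$, and identify $\Gamma_0(2)\cap\beta^{-1}\Gp\beta$ with (a conjugate of) $\Gamma_0(2m)$ so that the index becomes $\psi(2m)/\psi(2)=2^a\psi(b)$. The paper cites standard Hecke theory for the first step where you verify multiplicativity by hand, and it states the biconditional ``$\beta^{-1}w\beta\in\Gamma_0(2)$ iff $w\in\Gamma_0(2m)$ for $w\in\Gp$'' in one line where you split off the Fricke coset explicitly; these are cosmetic differences.

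There is, however, a small bookkeeping slip. When you take $\gamma\in\Gamma_0(2)$ and ask when $\beta^{-1}\gamma\beta=\left[\begin{smallmatrix}a&bm\\c/m&d\end{smallmatrix}\right]$ lies in $\Gamma_0(2)$, the condition $2m\mid c$ picks out those $\gamma\in\Gamma_0(2m)$, so what you have shown is $\Gamma_0(2)\cap\beta^{-1}\Gp\beta=\beta^{-1}\Gamma_0(2m)\beta$, not $\Gamma_0(2m)$ itself. (Equivalently: membership of $\gamma$ in $\beta^{-1}\Gp\beta$ means $\beta\gamma\beta^{-1}\in\Gp$, the opposite conjugation from the one you wrote.) This does not affect the numerical index, but you need one more sentence to justify $[\Gamma_0(2):\beta^{-1}\Gamma_0(2m)\beta]=[\Gamma_0(2):\Gamma_0(2m)]$. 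The paper does exactly this, observing that conjugation preserves areas of fundamental domains and hence indices; you should insert the same remark.
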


\begin{proof} First observe that each element of $M^m_1$ has the form $\left[\begin{smallmatrix} d & y \\ 0 & 2^a(b/d)\end{smallmatrix}\right]$ where~$d$ is a divisor of~$b$ (and hence odd) and $0\leq y < 2^a(b/d)$. The number of such matrices is $2^a$ times the number of matrices of the form $\left[\begin{smallmatrix} d & y \\ 0 & (b/d)\end{smallmatrix}\right]$, but where $0\leq y < (b/d)$. By the standard theory of Hecke operators for the modular groups, the number of these matrices is $\psi(b)$, the index of $\Gamma_0(b)$ in the modular group, and so the first equation follows.

For the second equation, if $w$ is in $\Gp$ we have that $\left[\begin{smallmatrix} 1 & 0 \\0 & m\end{smallmatrix}\right]^{-1} w \left[\begin{smallmatrix} 1 & 0 \\0 & m\end{smallmatrix}\right]$ is in $\Gamma_0(2)$ if and only if $w$ is in $\Gamma_0(2m)$. Since conjugation preserves areas of fundamental domains, it preserves indexes and so the required index is that of $\Gamma_0(2m)$ in $\Gamma_0(2)$ which is $\psi(2m)/3 = \psi(2^{a+1}b)/3 = 2^a\psi(b)$ as required.
 \end{proof}

\begin{Lemma} \label{closurelemma} The cosets $\Gp\gamma$ for $\gamma$ in $M_m^1$ are closed under right multiplication by elements of $\Gamma_0(2)$.
 \end{Lemma}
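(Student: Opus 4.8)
The plan is to show that for every $\gamma\in M_1^m$ and every $w\in\Gamma_0(2)$ there is a $\gamma'\in M_1^m$ with $\Gp\gamma w=\Gp\gamma'$. Since $\Gamma_0(2)\subset\Gp$, it is enough to produce an $r\in\Gamma_0(2)$ and an integer $k$ so that $\pm\left[\begin{smallmatrix}1&k\\0&1\end{smallmatrix}\right]r\,\gamma w$ lies in $M_1^m$. First I would record the parities of $\gamma w$. Writing $\gamma=\left[\begin{smallmatrix}x&y\\0&z\end{smallmatrix}\right]$ with $x$ odd and $w=\left[\begin{smallmatrix}a_w&b_w\\2c_w&d_w\end{smallmatrix}\right]$, the product $\gamma w$ is an integer matrix of determinant $m$ whose upper-left entry $xa_w+2yc_w$ is odd (both $x$ and $a_w$ are odd) and whose lower-left entry $2zc_w$ is even (in fact divisible by $4$, since $z$ is even because $m$ is even while $x$ is odd). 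These two parity facts are exactly what will keep the argument inside $\Gamma_0(2)$.

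The key step is to clear the lower-left entry by a single left multiplication by an element of $\Gamma_0(2)$. Let $p$ and $2\ell$ be the upper-left and lower-left entries of $\gamma w$, and set $e=\gcd(p,2\ell)$; since $p$ is odd, $e$ is odd, and as $e\mid 2\ell$ with $e$ odd we also get $e\mid\ell$. I would then choose the bottom row of $r$ to be $(2c,d)=(-2\ell/e,\,p/e)$. This is a legitimate bottom row for a matrix of $\Gamma_0(2)$: the entry $d=p/e$ is odd, $2c=-2\ell/e$ is even, and $\gcd(2c,d)=\gcd(2\ell,p)/e=1$, so the top row $(a,b)$ can be completed to give $ad-2bc=1$; a direct check shows $(r\gamma w)_{21}=-2\ell p/e+2\ell p/e=0$, so $r\gamma w$ is upper triangular. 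The oddness of $p$ is essential here: it forces $e$, and hence $d=p/e$, to be odd, which is precisely the condition needed for $r$ to lie in $\Gamma_0(2)$ rather than in the full modular group, so that no Fricke involution enters.

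Finally I would check that $r\gamma w$, after normalization, satisfies the defining conditions of $M_1^m$. Its determinant is still $m$, and its content (the gcd of all its entries) equals that of $\gamma$, namely $1$, because the smaller elementary divisor is invariant under multiplication by ${\rm SL}(2,\intn)$ on either side and $r,w\in\Gamma_0(2)\subset{\rm SL}(2,\intn)$. Its upper-left entry $ap+2b\ell$ is odd, again because $a$ is odd. A suitable power of $\left[\begin{smallmatrix}1&1\\0&1\end{smallmatrix}\right]\in\Gamma_0(2)$ then brings the upper-right entry into the range $0\le y'<z'$, and a factor $-I$ arranges both diagonal entries to be positive; the resulting $\gamma'$ lies in $M_1^m$ and $\Gp\gamma w=\Gp\gamma'$. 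I expect the only delicate point to be the parity bookkeeping of the second paragraph, which is what guarantees that the reducing matrix $r$ can be found in $\Gamma_0(2)$; the rest is the standard Hermite-normal-form reduction together with the invariance of elementary divisors.
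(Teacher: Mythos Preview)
Your argument is correct and rests on the same key observation as the paper's proof: the upper-left entry of $\gamma w$ is odd, which forces the gcd $e$ to be odd and therefore allows the reducing matrix $r$ to be chosen in $\Gamma_0(2)$ rather than merely in ${\rm SL}(2,\intn)$. The elementary-divisor argument for the content condition is a clean way to handle the $\gcd(x,y,z)=1$ requirement.

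The paper takes a slightly more economical route: rather than treating a general $w\in\Gamma_0(2)$, it observes that $\Gamma_0(2)$ is generated by $\left[\begin{smallmatrix}1&1\\0&1\end{smallmatrix}\right]$ and $\left[\begin{smallmatrix}1&0\\2&1\end{smallmatrix}\right]$, so it suffices to check closure under right multiplication by these two. The translation case is immediate, and for the second generator one has the explicit product $\left[\begin{smallmatrix}x+2y&y\\2z&z\end{smallmatrix}\right]$, whose reduction is then carried out exactly as you do. Your direct treatment of an arbitrary $w$ avoids invoking a generating set but requires tracking parities in the general product; the paper's version trades this for the need to know generators of $\Gamma_0(2)$. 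The underlying mechanism is the same in both. One small remark: your observation that the lower-left entry is divisible by $4$ (from $z$ even) is correct in context but not actually used; evenness alone suffices, and indeed your argument would go through verbatim for odd $m$ as well.
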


\begin{proof} The group $\Gamma_0(2)$ is generated by $\left[\begin{smallmatrix} 1 & 1 \\ 0 & 1 \end{smallmatrix}\right]$ and $\left[\begin{smallmatrix} 1 & 0 \\ 2 & 1 \end{smallmatrix}\right]$. Closure under right multiplication by the former is easy to verify.

 For the latter we have
 \begin{gather*}
 \begin{bmatrix} x & y \\ 0 & z \end{bmatrix}
 \begin{bmatrix} 1 & 0 \\ 2 & 1 \end{bmatrix} =
 \begin{bmatrix} x+2y & y \\ 2z & z \end{bmatrix}.
 \end{gather*}

If $g=\gcd(x+2y,2z)$ then $g$ is odd since $x$ is odd. Let $a$ and $b$ be such that $a2z+b(x+2y)=g$. Then $r =
\left[\begin{smallmatrix} b & a \\ -(2z/g) & (x+2y)/g \end{smallmatrix}\right]$ is in $\Gamma_0(2)$ and we have
 \begin{gather*}
 r\begin{bmatrix} x+2y & y \\ 2z & z \end{bmatrix} = \begin{bmatrix} g & (g-bx)/2 \\ 0 & m/g \end{bmatrix}.
 \end{gather*}

Up to left multiplication by a translation this last element is in $M_1^m$, as required.
 \end{proof}

 \begin{Lemma} \label{part1lemma}
 Part~$1$ of Proposition~{\rm \ref{cosetdecomp}} holds:
 \begin{gather*}
 \Gp\begin{bmatrix} 1 & 0 \\ 0 & m \end{bmatrix} \Gamma_0(2)
 = \bigcup_{\gamma\in M^m_1}\Gp \gamma.
 \end{gather*}
 \end{Lemma}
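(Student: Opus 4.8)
The plan is to prove the stated identity as two inclusions and then promote the easy inclusion to an equality by a counting argument drawing on the three preceding lemmas. The starting point is the standard double coset decomposition: since $\left[\begin{smallmatrix} 1 & 0 \\ 0 & m\end{smallmatrix}\right]$ lies in the commensurator, the double coset $\Gp\left[\begin{smallmatrix} 1 & 0 \\ 0 & m\end{smallmatrix}\right]\Gamma_0(2)$ is a \emph{finite} disjoint union of left cosets $\Gp\gamma_j$, and the number of these equals $[\Gamma_0(2) : \Gamma_0(2)\cap \left[\begin{smallmatrix} 1 & 0 \\ 0 & m\end{smallmatrix}\right]^{-1}\Gp\left[\begin{smallmatrix} 1 & 0 \\ 0 & m\end{smallmatrix}\right]]$. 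By Lemma~\ref{indexlemma} this index is exactly $\big|M_1^m\big|$.

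First I would establish the inclusion $\Gp\left[\begin{smallmatrix} 1 & 0 \\ 0 & m\end{smallmatrix}\right]\Gamma_0(2)\subseteq \bigcup_{\gamma\in M_1^m}\Gp\gamma$. Note that $\left[\begin{smallmatrix} 1 & 0 \\ 0 & m\end{smallmatrix}\right]$ itself belongs to $M_1^m$ (taking $x=1$, $y=0$, $z=m$ satisfies $xz=m$, $0\le y<z$, $\gcd(1,0,m)=1$ and $x$ odd), so it lies in the right-hand union. That union is trivially closed under left multiplication by $\Gp$, and by Lemma~\ref{closurelemma} it is also closed under right multiplication by $\Gamma_0(2)$. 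Hence it contains every element of the form $w\left[\begin{smallmatrix} 1 & 0 \\ 0 & m\end{smallmatrix}\right]w'$ with $w\in\Gp$ and $w'\in\Gamma_0(2)$, which is precisely the claimed inclusion.

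To upgrade this to equality I would use cardinalities. By Lemma~\ref{distinctlemma} the cosets $\Gp\gamma$, $\gamma\in M_1^m$, are pairwise distinct, so the right-hand side is a disjoint union of exactly $\big|M_1^m\big|$ left cosets. The double coset on the left is itself a disjoint union of full left $\Gp$-cosets; since it is contained in the disjoint union on the right, each of its constituent cosets must coincide with one of the $\Gp\gamma$. Thus the left-hand double coset is a union of cosets chosen from the $\big|M_1^m\big|$ distinct cosets on the right, and by the opening count it consists of exactly $\big|M_1^m\big|$ of them. Two disjoint unions of the same finite number of cosets, one contained in the other, must be equal, which gives the proposition.

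I expect the only delicate point to be bookkeeping rather than computation: I must be sure that the double coset really is a finite disjoint union of left $\Gp$-cosets with the cardinality supplied by the commensurator index formula, and that matching this number against $\big|M_1^m\big|$ from Lemma~\ref{indexlemma} — together with the one-sided containment — legitimately forces equality. Once the three preceding lemmas are in hand, no further explicit matrix manipulation is needed.
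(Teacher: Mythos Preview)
Your argument is correct and matches the paper's proof essentially line for line: both use that $\left[\begin{smallmatrix} 1 & 0 \\ 0 & m\end{smallmatrix}\right]\in M_1^m$ together with Lemma~\ref{closurelemma} to get the inclusion, then invoke the commensurator index formula (the paper cites \cite[Proposition~3.1]{Shi}) with Lemma~\ref{indexlemma} and the distinctness from Lemma~\ref{distinctlemma} to force equality by counting.
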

\begin{proof} By Lemma \ref{closurelemma} and the fact that $\left[\begin{smallmatrix} 1 & 0 \\ 0 & m \end{smallmatrix}\right]$ is in $M_1^m$ we can conclude that $\bigcup\limits_{\gamma\in M^m_1}\Gp \gamma$ is a union of $\Gp - \Gamma_0(2)$ double cosets which includes the double coset $ \Gp\left[\begin{smallmatrix} 1 & 0 \\ 0 & m \end{smallmatrix}\right] \Gamma_0(2)$. According to \cite[Proposition~3.1]{Shi}, if $G_1$ and $G_2$ are commensurable subgroups of a group $G$ then for each $g$ in $G$ the double coset $G_1gG_2$ is a union of disjoint single cosets $\bigcup\limits_{i=1}^d G_1 g_i$ where $d$ is the index of $G_2\cap g^{-1}G_1g$ in $G_2$. Using this result and Lemma~\ref{indexlemma}, the double coset $ \Gp\left[\begin{smallmatrix} 1 & 0 \\ 0 & m \end{smallmatrix}\right] \Gamma_0(2)$ is a union of $\vert M_1^m\vert$ distinct left $\Gp$ cosets. By Lemma~\ref{distinctlemma} the left cosets defined by elements of $M_1^m$ are distinct and so we have equality.
 \end{proof}

\begin{Lemma} \label{equalsizelemma} $\vert M_1^m\vert = \vert M_2^m\vert$.
 \end{Lemma}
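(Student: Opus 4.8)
The plan is to prove the two cardinalities are equal by computing $|M_2^m|$ directly and comparing it with the value $|M_1^m| = 2^a\psi(b)$ already established in Lemma~\ref{indexlemma}, where throughout $m = 2^a b$ with $a > 0$ and $b$ odd. Since the matrices in $S_1^m$ have integer entries while those in $S_2^m$ are $2^{-1/2}$ times an integer matrix whose diagonal entries are both even (and so are not integer matrices), the union $M_2^m = S_1^m \cup S_2^m$ is disjoint and $|M_2^m| = |S_1^m| + |S_2^m|$. The underlying tool I would use is the standard fact (as in Lemma~\ref{indexlemma}) that the number of primitive upper-triangular matrices $\left[\begin{smallmatrix} x & y \\ 0 & z\end{smallmatrix}\right]$ with $xz = N$, $0 \le y < z$ and $\gcd(x,y,z)=1$ is $\psi(N)$, together with the fact that this count is multiplicative, factoring by the Chinese remainder theorem into independent contributions from the prime power $2^{v_2(N)}$ and from the odd part of $N$.

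First I would compute $|S_1^m|$. Here $xz = m = 2^a b$ with $z$ odd, so $z$ divides $b$ and all the powers of $2$ are forced into $x$; writing $x = 2^a x'$ we get $x' z = b$, and since $z$ is odd we have $\gcd(2^a x', y, z) = \gcd(x', y, z)$. Hence $\left[\begin{smallmatrix} x & y \\ 0 & z\end{smallmatrix}\right] \mapsto \left[\begin{smallmatrix} x' & y \\ 0 & z\end{smallmatrix}\right]$ is a bijection from $S_1^m$ onto the primitive upper-triangular matrices of determinant $b$ with $0 \le y < z$, so $|S_1^m| = \psi(b)$.

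Next I would compute $|S_2^m|$. Writing the integer matrix as $\left[\begin{smallmatrix} x & y \\ 0 & z\end{smallmatrix}\right]$ with $xz = 2m = 2^{a+1}b$ and both $x,z$ even, the constraint that $x$ and $z$ be even concerns only the $2$-part of the matrix, so by multiplicativity $|S_2^m|$ equals the number of admissible $2$-parts times $\psi(b)$. An admissible $2$-part is a primitive $\left[\begin{smallmatrix} 2^i & y \\ 0 & 2^{a+1-i}\end{smallmatrix}\right]$ with $1 \le i \le a$ (forcing both diagonal entries to be even); primitivity then forces $y$ odd, giving $2^{a-i}$ choices of $y$ in $[0, 2^{a+1-i})$. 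Summing the geometric series $\sum_{i=1}^{a} 2^{a-i} = 2^a - 1$, so $|S_2^m| = (2^a - 1)\psi(b)$.

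Combining the two counts yields $|M_2^m| = |S_1^m| + |S_2^m| = \psi(b) + (2^a-1)\psi(b) = 2^a\psi(b) = |M_1^m|$, where the last equality is Lemma~\ref{indexlemma}. The only delicate point, and the step I expect to require the most care, is the bookkeeping of the primitivity condition $\gcd(x,y,z)=1$ under the prime-by-prime factorization: one must check that at the prime $2$ exactly the middle exponents $1 \le i \le a$ correspond to the "both even" matrices and that the number of admissible $y$ is precisely $2^{a-i}$. Once this is set up, the bijection for $S_1^m$ and the finite geometric sum for $S_2^m$ are routine.
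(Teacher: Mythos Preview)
Your proof is correct and follows the same overall strategy as the paper: compute $|S_1^m|$ and $|S_2^m|$ separately, using for $|S_1^m|$ the same bijection with primitive determinant-$b$ matrices to get $\psi(b)$. The only difference is in the bookkeeping for $|S_2^m|$: the paper uses inclusion--exclusion (all $\psi(2m)$ primitive matrices of determinant $2m$, minus the $2^{a+1}\psi(b)$ with $x$ odd and the $\psi(b)$ with $z$ odd), whereas you factor the count multiplicatively via the Chinese remainder theorem and sum the geometric series $\sum_{i=1}^{a}2^{a-i}=2^a-1$ directly at the prime~$2$; both yield $(2^a-1)\psi(b)$.
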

\begin{proof} With $m=2^ab$ as above, we have to show $\vert M_2^m\vert = 2^a\psi(b)$. Recall that $M_2^m = S_1^m\cup S_2^m$ with $S_1^m$ and $S_2^m$ defined as in (\ref{MSdefinitions}). We will find the sizes of these two sets.

A typical element of $S_1^m$ has the form $\left[\begin{smallmatrix} 2^a(b/s) & y \\ 0 & s \end{smallmatrix}\right] $ where $s$ is a divisor of $b$ and hence odd. The elements of this matrix are coprime if and only if the elements of the matrix $\left[\begin{smallmatrix} (b/s) & y \\ 0 & s \end{smallmatrix}\right] $ are coprime and there is a one to one correspondence between these two sets of matrices. But the number of matrices of the latter form is $\psi(b)$ and so this is the size of $S_1^m$.

To count the elements of $S_2^m$ consider a typical element, which has the form $2^{-1/2}\left[\begin{smallmatrix} (2m/z) & y \\ 0 & z \end{smallmatrix}\right] $ with $z$ an even divisor of $2m$ such that $2m/z$ is even and~$y$ is such that $0\leq y < z$ and the entries are coprime.

If we ignore the condition that $z$ and $2m/z$ are even, there are $\psi(2m)$ such matrices. But this over-counts by matrices of the form $2^{-1/2}\left[\begin{smallmatrix} b/s & y \\ 0 & 2^{a+1}s \end{smallmatrix}\right] $ of
which there are $2^{a+1}\psi(b)$ (as in the proof of Lemma \ref{indexlemma}), and also matrices of the form $2^{-1/2}\left[\begin{smallmatrix} 2^{a+1}b/s & y \\ 0 & s \end{smallmatrix}\right] $ of which there are $\psi(b)$ (as in case of $S_1^M$ above). This gives $\vert S_2^m\vert = \psi(2^{a+1}b) - 2^{a+1}\psi(b)-\psi(b) = 3\times2^{a}\psi(b) - 2^{a+1}\psi(b) - \psi(b) = 2^a\psi(b)-\psi(b)$. So finally $\vert M_2^m\vert = 2^a\psi(b)$ as required.
 \end{proof}

 Let $w_2 = 2^{-1/2}\left[\begin{smallmatrix} 0 & -1 \\ 2 & 0\end{smallmatrix}\right]$.
 \begin{Lemma}\label{w2lemma}
 \begin{gather*}
 \bigcup_{\gamma\in M_1^m} \Gp\gamma = \bigcup_{\gamma\in M_2^m} \Gp\gamma w_2.
 \end{gather*}
 \end{Lemma}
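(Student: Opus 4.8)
The plan is to cut the statement down to a single inclusion by counting cosets and then to verify that inclusion by a direct computation with $w_2$. Both sides are disjoint unions of distinct left cosets: by Lemma~\ref{distinctlemma} the cosets $\Gp\gamma$ with $\gamma\in M_1^m$ are distinct, so the left-hand side consists of exactly $\vert M_1^m\vert$ of them; on the right, right multiplication by the fixed element $w_2$ is a bijection of the space of left cosets, so the cosets $\Gp\gamma w_2$ with $\gamma\in M_2^m$ are distinct, and there are $\vert M_2^m\vert$ of them by Lemma~\ref{distinctlemma}. Since $\vert M_1^m\vert=\vert M_2^m\vert$ by Lemma~\ref{equalsizelemma}, the two sides are unions of the same finite number of distinct cosets; as distinct cosets are either equal or disjoint, it suffices to prove the single inclusion $\bigcup_{\gamma\in M_2^m}\Gp\gamma w_2\subseteq\bigcup_{\gamma\in M_1^m}\Gp\gamma$.

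For this inclusion I would invoke Lemma~\ref{part1lemma} to replace the right-hand side by the single double coset $\Gp\left[\begin{smallmatrix}1&0\\0&m\end{smallmatrix}\right]\Gamma_0(2)$ (this is the inclusion worth proving precisely because the left-hand side already has a clean double-coset description, whereas $\bigcup_{\gamma\in M_2^m}\Gp\gamma$ does not yet). The task becomes showing $\gamma w_2\in\Gp\left[\begin{smallmatrix}1&0\\0&m\end{smallmatrix}\right]\Gamma_0(2)$ for every $\gamma\in M_2^m$. Because $w_2\in\Gp$ we have $\Gp w_2=\Gp$ and hence $\Gp\gamma w_2=\Gp\big(w_2^{-1}\gamma w_2\big)$, reducing the problem to locating the conjugate $w_2^{-1}\gamma w_2$. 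This is the natural move: a direct computation gives $w_2^{-1}\left[\begin{smallmatrix}1&0\\0&m\end{smallmatrix}\right]w_2=\left[\begin{smallmatrix}m&0\\0&1\end{smallmatrix}\right]$, so conjugation by $w_2$ is exactly the operation interchanging the two diagonal matrices appearing in Proposition~\ref{cosetdecomp}.

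I would then treat the two pieces of $M_2^m=S_1^m\cup S_2^m$ separately. For $\gamma=\left[\begin{smallmatrix}x&y\\0&z\end{smallmatrix}\right]\in S_1^m$ (so $z$ is odd and $x$ even) one computes the integral lower-triangular matrix $w_2^{-1}\gamma w_2=\left[\begin{smallmatrix}z&0\\-2y&x\end{smallmatrix}\right]\in\Delta_2$; its entries are coprime because $z$ is odd, and a suitable element of $\Gamma_0(2)$ multiplies it on the left into upper-triangular shape whose upper-left entry is $\gcd(y,z)$, hence odd, so that after a right translation by $\Gamma_0(2)$ it lands in $M_1^m$. For $\gamma=2^{-1/2}\left[\begin{smallmatrix}x&y\\0&z\end{smallmatrix}\right]\in S_2^m$ (so $x,z$ are even with $xz=2m$) the factor $2^{-1/2}$ makes the conjugate non-integral, and it is cleaner to use $\Gp w_2=\Gp$ once more and work directly with $\gamma w_2=\left[\begin{smallmatrix}y&-x/2\\z&0\end{smallmatrix}\right]$, an integral matrix of determinant $m$, which a suitable element of $\Gamma_0(2)$ again reduces to $M_1^m$. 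In either case membership in $\Gp\left[\begin{smallmatrix}1&0\\0&m\end{smallmatrix}\right]\Gamma_0(2)$ then follows from Lemma~\ref{part1lemma}, which completes the inclusion and, with the counting argument, the proof.

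The matrix multiplications are routine; the step that requires care is the final $\Gamma_0(2)$-reduction of the two triangular forms to genuine elements of $M_1^m$. The crux is parity bookkeeping: one must exploit the oddness of $z$ in the $S_1^m$ case, and the evenness of $x$ and $z$ together with the $2^{-1/2}$ scaling in the $S_2^m$ case, to be certain that the reduced matrix is primitive, has determinant $m$, and has an odd upper-left entry, so that it truly belongs to $M_1^m$ and not merely to the full $\mathrm{GL}_2(\intn)$-orbit. I expect this parity tracking, rather than the conjugation identity, to be the main obstacle.
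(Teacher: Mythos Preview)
Your approach is essentially the paper's: the same counting reduction via Lemmas~\ref{distinctlemma} and~\ref{equalsizelemma}, the same case split $M_2^m=S_1^m\cup S_2^m$, and the same key computations ($w_2^{-1}\gamma w_2=\left[\begin{smallmatrix}z&0\\-2y&x\end{smallmatrix}\right]$ in the first case, $\gamma w_2=\left[\begin{smallmatrix}y&-x/2\\z&0\end{smallmatrix}\right]$ in the second), followed by an explicit left $\Gamma_0(2)$-reduction to upper-triangular form with odd top-left entry. Two minor points: the detour through Lemma~\ref{part1lemma} is unnecessary, since the paper simply shows $\gamma w_2\in\Gp\cdot M_1^m$ directly using only left multiplication by $\Gp$; and your ``right translation by $\Gamma_0(2)$'' should be a \emph{left} translation by $\left[\begin{smallmatrix}1&k\\0&1\end{smallmatrix}\right]\in\Gp$, since that is what reduces the $(1,2)$-entry modulo the bottom-right entry and actually places the matrix in $M_1^m$.
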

 \begin{proof} By Lemmas \ref{distinctlemma} and \ref{equalsizelemma} it suffices to show that each $\gamma w_2$ with $\gamma$ in $M_2^m$ is equal, up to left multiplication by elements of $\Gp$, to an element of $M_1^m$.

 {\bf Case I:} Suppose $\gamma
 = \left[\begin{smallmatrix} x & y \\ 0 & z\end{smallmatrix}\right]$ is in $S_1^m$.
 Then $w_2^{-1}\gamma w_2 =
 \left[\begin{smallmatrix} z & 0 \\ -2y & x\end{smallmatrix}\right]$. Let
 $g=\gcd(z,2y)$ which is odd since $z$ is odd, and let $a$ and $b$ be such that
 $az-b2y =g$. Then $r=
\left[\begin{smallmatrix} a & b \\ 2y/g & z/g \end{smallmatrix}\right]$ is in
 $\Gamma_0(2)$ and $r
 \left[\begin{smallmatrix} z & 0 \\ -2y & x\end{smallmatrix}\right] =
 \left[\begin{smallmatrix} g & bx \\ 0 & m/g\end{smallmatrix}\right]$ which, up
 to a left translation, is in $M_1^m$.

 {\bf Case II:} Suppose $\gamma
 = 2^{-1/2}\left[\begin{smallmatrix} x & y \\ 0 & z\end{smallmatrix}\right]$ is in $S_2^m$.
 Then $\gamma w_2 =
 \left[\begin{smallmatrix} y & -x/2 \\ z & 0\end{smallmatrix}\right]$. Let
 $g=\gcd(z,y)$ which is odd since $y$ is odd, and let $a$ and $b$ be such that $az+by =g$ with $a$ even (which is possible since $y/g$
 is odd). Then $r=
 \left[\begin{smallmatrix} b & a \\ -z/g & y/g\end{smallmatrix}\right]$ is in
 $\Gamma_0(2)$ and $r
 \left[\begin{smallmatrix} y & -x/2 \\ z & 0\end{smallmatrix}\right] =
 \left[\begin{smallmatrix} g & -bx/2 \\ 0 & m/g\end{smallmatrix}\right]$ which again, up to a left translation, is in $M_1^m$.
 \end{proof}

 \begin{proof}[Proof of Proposition \ref{cosetdecomp}] Part 1 of the Proposition was proved in Lemma~\ref{part1lemma}.

Part 2 follows from
 \begin{gather*}
 \bigcup_{\gamma\in M_2^m} \Gp\gamma =
 \bigcup_{\gamma\in M_1^m} \Gp\gamma w_2 =
 \Gp\begin{bmatrix} 1 & 0 \\ 0 & m \end{bmatrix} \Gamma_0(2)w_2 \\
 \hphantom{\bigcup_{\gamma\in M_2^m} \Gp\gamma}{} =
 \Gp w_2^{-1}\begin{bmatrix} 1 & 0 \\ 0 & m \end{bmatrix}w_2 \Gamma_0(2) =
 \Gp\begin{bmatrix} m & 0 \\ 0 & 1 \end{bmatrix} \Gamma_0(2),
 \end{gather*}
where the first equality follows from Lemma \ref{w2lemma} and the fact that $w_2^2 = -1_2$.

 Finally, part 3 follows from
 \begin{gather*}
 \Gp\begin{bmatrix} 1 & 0 \\ 0 & m \end{bmatrix} \Gp =\left(\Gp\begin{bmatrix} 1 & 0 \\ 0 & m \end{bmatrix} \Gamma_0(2)\right)\bigcup \left(\Gp\begin{bmatrix} 1 & 0 \\ 0 & m \end{bmatrix} \Gamma_0(2)w_2\right) \\
\hphantom{\Gp\begin{bmatrix} 1 & 0 \\ 0 & m \end{bmatrix} \Gp }{}
=\left(\Gp\begin{bmatrix} 1 & 0 \\ 0 & m \end{bmatrix} \Gamma_0(2)\right)\bigcup\left(\Gp\begin{bmatrix} m & 0 \\ 0 & 1 \end{bmatrix} \Gamma_0(2)\right) \\
\hphantom{\Gp\begin{bmatrix} 1 & 0 \\ 0 & m \end{bmatrix} \Gp }{} =\bigg(\bigcup_{\gamma\in M_1^m} \Gp\gamma\bigg) \bigcup \bigg( \bigcup_{\gamma\in M_2^m} \Gp\gamma \bigg)
=\bigcup_{\gamma\in M^m} \Gp\gamma,
 \end{gather*}
where we have used parts~1 and~2.
 \end{proof}

For the case where $m$ is odd we have from the definition (\ref{MSdefinitions}) above that
\begin{gather*}\label{Modddefinition}
M^m = \left\{ \begin{bmatrix} x & y \\ 0 & z\end{bmatrix}\,|\,
xz=m,\, 0\leq y < z,\, \gcd(x,y,z) = 1\right\}.
\end{gather*}

We then have the following:

\begin{Proposition}\label{oddcosetdecomp}
 Let $m$ be an odd positive integer then
\begin{gather*}
\Gp\begin{bmatrix} 1 & 0 \\ 0 & m \end{bmatrix} \Gamma_0(2) =
\Gp\begin{bmatrix} m & 0 \\ 0 & 1 \end{bmatrix} \Gamma_0(2) = \Gp\begin{bmatrix} m & 0 \\ 0 & 1 \end{bmatrix} \Gp \\
\hphantom{\Gp\begin{bmatrix} 1 & 0 \\ 0 & m \end{bmatrix} \Gamma_0(2)}{} =
\Gp\begin{bmatrix} 1 & 0 \\ 0 & m \end{bmatrix} \Gp = \bigcup_{\gamma\in M^m}\Gp \gamma.
\end{gather*}
\end{Proposition}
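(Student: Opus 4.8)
The plan is to reproduce, in the simpler odd setting, the chain of lemmas that established Proposition~\ref{cosetdecomp}. The observation that makes the odd case collapse is twofold: when $m$ is odd every $\gamma=\left[\begin{smallmatrix} x & y \\ 0 & z\end{smallmatrix}\right]\in M^m$ has integer entries with $x$ and $z$ \emph{both} odd (as $xz=m$), and \emph{both} of the diagonal matrices $\left[\begin{smallmatrix} 1 & 0 \\ 0 & m\end{smallmatrix}\right]$ and $\left[\begin{smallmatrix} m & 0 \\ 0 & 1\end{smallmatrix}\right]$ already belong to $M^m$. Consequently there is no need for the $w_2$-twisting of Lemma~\ref{w2lemma} to pass between the two diagonal matrices: the two double cosets that were interchanged by $w_2$ in the even case will turn out to be literally equal.

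First I would check that the cosets $\Gp\gamma$ for $\gamma\in M^m$ are distinct. The proof of Lemma~\ref{distinctlemma} uses only that the $\gamma$ are upper triangular together with the description of the stabiliser of $\infty$ in $\Gp$, so it applies verbatim. Next I would show that $\bigcup_{\gamma\in M^m}\Gp\gamma$ is closed under right multiplication by $\Gamma_0(2)$, mimicking Lemma~\ref{closurelemma}: closure under $\left[\begin{smallmatrix} 1 & 1 \\ 0 & 1\end{smallmatrix}\right]$ is immediate after reducing the upper-right entry modulo $z$, while for $\left[\begin{smallmatrix} 1 & 0 \\ 2 & 1\end{smallmatrix}\right]$ the same $\gcd$ manipulation goes through because $x$ odd forces $g=\gcd(x+2y,2z)$ to be odd, so the reducing matrix $r$ lies in $\Gamma_0(2)$ and $r\left[\begin{smallmatrix} x+2y & y \\ 2z & z\end{smallmatrix}\right]$ is again upper triangular and primitive with odd diagonal. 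Finally I would count: $|M^m|=\psi(m)$ by the standard theory of Hecke representatives for the modular group, and the index of $\Gamma_0(2)\cap\left[\begin{smallmatrix} 1 & 0 \\ 0 & m\end{smallmatrix}\right]^{-1}\Gp\left[\begin{smallmatrix} 1 & 0 \\ 0 & m\end{smallmatrix}\right]$ in $\Gamma_0(2)$ is, exactly as in Lemma~\ref{indexlemma}, the index of $\Gamma_0(2m)$ in $\Gamma_0(2)$; since $\gcd(2,m)=1$ this is $\psi(2m)/3=\psi(m)$. Combining closure, the double-coset counting formula of~\cite{Shi}, distinctness, and $|M^m|=\psi(m)$ yields, just as in Lemma~\ref{part1lemma}, the equality $\Gp\left[\begin{smallmatrix} 1 & 0 \\ 0 & m\end{smallmatrix}\right]\Gamma_0(2)=\bigcup_{\gamma\in M^m}\Gp\gamma$.

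It then remains to collapse the other cosets onto this one. Because $\left[\begin{smallmatrix} m & 0 \\ 0 & 1\end{smallmatrix}\right]\in M^m$, it lies in the double coset $\Gp\left[\begin{smallmatrix} 1 & 0 \\ 0 & m\end{smallmatrix}\right]\Gamma_0(2)$ just identified; since two $\Gp$--$\Gamma_0(2)$ double cosets that meet are equal, we get $\Gp\left[\begin{smallmatrix} m & 0 \\ 0 & 1\end{smallmatrix}\right]\Gamma_0(2)=\Gp\left[\begin{smallmatrix} 1 & 0 \\ 0 & m\end{smallmatrix}\right]\Gamma_0(2)$. For the $\Gp$--$\Gp$ double cosets I would write $\Gp=\Gamma_0(2)\cup w_2\Gamma_0(2)$ and use $\left[\begin{smallmatrix} 1 & 0 \\ 0 & m\end{smallmatrix}\right]w_2=w_2\left[\begin{smallmatrix} m & 0 \\ 0 & 1\end{smallmatrix}\right]$ together with $\Gp w_2=\Gp$ to obtain $\Gp\left[\begin{smallmatrix} 1 & 0 \\ 0 & m\end{smallmatrix}\right]\Gp=\Gp\left[\begin{smallmatrix} 1 & 0 \\ 0 & m\end{smallmatrix}\right]\Gamma_0(2)\cup\Gp\left[\begin{smallmatrix} m & 0 \\ 0 & 1\end{smallmatrix}\right]\Gamma_0(2)$, which by the previous sentence is the single double coset $\bigcup_{\gamma\in M^m}\Gp\gamma$. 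The identity $\left[\begin{smallmatrix} m & 0 \\ 0 & 1\end{smallmatrix}\right]=w_2^{-1}\left[\begin{smallmatrix} 1 & 0 \\ 0 & m\end{smallmatrix}\right]w_2$ likewise shows the two $\Gp$--$\Gp$ double cosets coincide, giving all the asserted equalities.

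The step needing the most care is the index computation: as in the even case one must verify that no element of the $w_2\Gamma_0(2)$ part of $\Gp$ contributes to the relevant intersection and that the conjugated subgroup is exactly $\Gamma_0(2m)$. The reward is that here, unlike the even case, the coprimality $\gcd(2,m)=1$ gives $\psi(2m)=\psi(2)\psi(m)=3\psi(m)$, so the index reduces cleanly to $\psi(m)=|M^m|$; every other step is a routine specialisation of the even-case lemmas.
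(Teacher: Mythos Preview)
Your proposal is correct and is precisely the argument the paper has in mind: the paper's own proof of Proposition~\ref{oddcosetdecomp} consists of the single sentence ``The proof is similar, but simpler, than the proof of Proposition~\ref{cosetdecomp} and we omit it,'' and your write-up is exactly the specialisation of Lemmas~\ref{distinctlemma}--\ref{part1lemma} to odd $m$ that this sentence invites, together with the observation that $\left[\begin{smallmatrix} m & 0 \\ 0 & 1\end{smallmatrix}\right]\in M^m$ collapses the remaining double cosets without the $w_2$-twist of Lemma~\ref{w2lemma}.
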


\begin{proof} The proof is similar, but simpler, than the proof of Proposition \ref{cosetdecomp} and we omit it.
\end{proof}

We now wish to introduce Hecke operators for $\Gp$. In the introduction to this section, we defined Hecke operators by starting with double cosets and then writing these double cosets as single cosets. We will find it convenient here to reverse this ordering and start by defining operators ${\bf \tilde{T}}_m$ and ${\bf T}_m$ by their actions and then deducing their characterizations as double cosets given in~(\ref{Tm}). Moreover, to further simplify the presentation we only consider actions on functions (i.e., weight zero modular forms) so that the action of ${\bf T}_{a,d}$ is equal to that of ${\bf T}_{a/d,1}$ which, as mentioned earlier, we write as ${\bf T}_{a/d}$. Thus we make the following definitions:

\begin{Definition} \label{HeckeOps}Let $m$ be a positive integer
\begin{enumerate}\itemsep=0pt
 \item[1)]
 ${\bf {T}}_m f(\tau)=
\displaystyle \sum\limits_{\gamma \in M^m} f(\gamma(z))
 $,
 \item[2)]
 $
 {\bf \tilde{T}}_m f(\tau)=
 \begin{cases}
 \displaystyle \sum\limits_{\substack{xz =m\\0\leq y < z}} f\left(\frac{x\tau + y}{z}\right),& m \ \hbox{\rm odd},\\
 \displaystyle \sum_{\substack{xz =m\\0\leq y < z}} f\left(\frac{x\tau + y}{z}\right)+\sum_{\substack{xz =2m\\x,z\ {\rm even}\\0\leq y < z}} f\left(\frac{x\tau + y}{z}\right),& m \ \hbox{\rm even}.
 \end{cases}
$
\end{enumerate}

\end{Definition}By Proposition \ref{cosetdecomp} this definition of the action ${\bf T}_m$ is the same as the action of the double coset $\Gp \left[\begin{smallmatrix} m&0\\0&1\end{smallmatrix}\right]\Gp$ as required.
Moreover, if we observe that every element of~$\Delta_2$ is, up to left multiplication by an element of $\Gp$, equivalent to one of the matrices which occur on the right side of part~2 of Definition~\ref{HeckeOps} and that these matrices represent distinct left cosets of~$\Gp$, then it is straightforward to verify that this definition of~$\tilde{T}_m$ is equivalent to that give in~(\ref{Tm}).

The relationship between ${\bf T}_m$ and ${\bf \tilde{T}}_m$ is given by the following:

\begin{Proposition}\label{HeckeFormula}
 Let $m=2^\alpha \beta$ be a positive integer where $\beta$ is odd, then
\begin{gather*}
 {\bf \tilde{T}}_m = \sum_{\substack{\divides{r^2}{ m}\\r\ {\rm odd}}}
 \sum_{i=0}^\alpha {\bf T}_{m/(r^2 2^i)}.
\end{gather*}
\end{Proposition}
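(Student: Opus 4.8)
The plan is to prove the identity as an equality of operators acting on functions, using the explicit single-coset descriptions already in hand. By Definition~\ref{HeckeOps} both $\tilde{\bf T}_m f$ and each ${\bf T}_n f$ are sums of terms $f\big(\frac{x\tau+y}{z}\big)$, so each side is a finite multiset of weight-zero M\"obius maps $\tau\mapsto \frac{x}{z}\tau+\frac{y}{z}$. Since $f$ is $\Gp$-invariant and we work projectively, such a map depends only on its unique primitive upper-triangular representative $\left[\begin{smallmatrix}X&Y\\0&Z\end{smallmatrix}\right]$ with $\gcd(X,Y,Z)=1$, $Z>0$, $0\le Y<Z$; and if the two multisets of maps coincide then the two operators are literally equal. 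Hence it suffices to show that the two sides assign the same multiplicity to every such primitive representative $P$, and I will in fact show both multiplicities are always $0$ or $1$ and coincide.

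First I would compute the left-hand multiplicities. The representatives in Definition~\ref{HeckeOps} are not primitive, so I group them by their content $c=\gcd(x,y,z)$: a triple with $x=cX$, $y=cY$, $z=cZ$ reduces to $P$, and $0\le y<z$ is equivalent to $0\le Y<Z$. Writing $D=XZ=\det P$, the list $xz=m$ contributes $P$ exactly once, precisely when $m/D$ is a perfect square (forcing $c=\sqrt{m/D}$), and the list $xz=2m$ with $x,z$ even contributes $P$ exactly once, when $2m/D$ is a perfect square and $cX,cZ$ are both even. For the right-hand side I would read off from~(\ref{MSdefinitions}) that $M^n$ consists of the primitive determinant-$n$ matrices with $X$ or $Z$ odd, together with the scaled primitive determinant-$2n$ matrices with $X,Z$ both even. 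Thus each primitive $P$ sits in exactly one operator, namely ${\bf T}_{D}$ if $X$ or $Z$ is odd and ${\bf T}_{D/2}$ if $X,Z$ are both even, and its right-hand multiplicity is the number of pairs $(r,i)$ with $r$ odd, $r^2\mid m$, $0\le i\le\alpha$ solving $r^2 2^i = m/n$. Because $r$ is odd, such a pair, when it exists, is unique: $i=v_2(m/n)$ and $r^2$ is the odd part of $m/n$.

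It then remains to match the two counts, which I would do by a short $2$-adic bookkeeping after writing $m=2^\alpha\beta$ and $D=2^e d_{\mathrm{odd}}$ with $\beta,d_{\mathrm{odd}}$ odd. When $X$ or $Z$ is odd, at most one of $m/D$ and $2m/D$ can be a perfect square, and both sides collapse to the single indicator ``$d_{\mathrm{odd}}\mid\beta$, $\beta/d_{\mathrm{odd}}$ a square, and $e\le\alpha$''; when $X,Z$ are both even the same indicator appears, but now $e$ may reach $\alpha+1$, matching the fact that $P$ is then weighed against determinant $2m$. The combinatorial heart is that the $2$-exponents produced by the two determinant classes, of the shapes $\alpha-2s$ and $\alpha-1-2s$, together hit each of $0,1,\dots,\alpha$ exactly once, which is precisely what collapses the answer into the clean sum $\sum_{i=0}^{\alpha}$. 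I expect the main obstacle to be this reconciliation of the ``$X,Z$ both even'' representatives—those coming from the Atkin--Lehner part $S^n_2$ and from the $xz=2m$ list—since they shift the determinant by a factor of two and permit $e=\alpha+1$, so they must be tracked separately from the generic case to avoid an off-by-one in the power of two. The case $m$ odd is simply the specialization $\alpha=0$ and needs no separate treatment.
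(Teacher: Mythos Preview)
Your approach is correct and genuinely different from the paper's. You argue by computing, for each primitive upper-triangular representative $P=\left[\begin{smallmatrix}X&Y\\0&Z\end{smallmatrix}\right]$ with determinant $D=XZ$, its multiplicity on both sides, and matching these via a $2$-adic case split on the parities of $X,Z$. The paper instead introduces four auxiliary operators ${\bf R}_m$, ${\bf D}_m$, ${\bf U}_m$, ${\bf O}_m$ (summing over triples $(x,y,z)$ with $xz=m$ and various parity constraints), expresses both $\tilde{\bf T}_m$ and $\sum_{r^2\mid m,\ r\ \text{odd}}{\bf T}_{m/r^2}$ in terms of these, derives the telescoping identity $\tilde{\bf T}_{2m}-\tilde{\bf T}_m={\bf O}_{4m}+{\bf U}_{2m}+{\bf D}_{2m}$, and finishes by induction on $\alpha$.

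Your direct multiplicity count is more elementary and self-contained: it avoids the auxiliary operators entirely and makes transparent why the answer is $0$ or $1$ on both sides (uniqueness of $(r,i)$ follows immediately from $r$ odd). The paper's inductive/structural route, on the other hand, isolates the recursion $\tilde{\bf T}_{2m}-\tilde{\bf T}_m=\sum_{r}{\bf T}_{2m/r^2}$ as a reusable identity, which is arguably more suggestive of the underlying Hecke-algebra structure. One small remark: your appeal to $\Gp$-invariance of $f$ is unnecessary, since scaling $(x,y,z)\mapsto(cx,cy,cz)$ already leaves the map $\tau\mapsto(x\tau+y)/z$ unchanged; the multiset equality you establish therefore proves operator equality on all functions, not just modular ones.
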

\begin{proof}
 Define first the following operators
\begin{alignat*}{3}
 & {\bf R}_m f(\tau) = \sum_{\substack{xz = m\\0\leq y <z}} f\left(\frac{x\tau +y}{z}\right),\qquad &&
 {\bf D}_m f(\tau) = \sum_{\substack{xz = m\\x\ {\rm odd}\\0\leq y <z}} f\left(\frac{x\tau +y}{z}\right),&\\
& {\bf U}_m f(\tau) = \sum_{\substack{xz = m\\z\ {\rm odd}\\0\leq y <z}} f\left(\frac{x\tau +y}{z}\right),\qquad &&
 {\bf O}_m f(\tau) = \sum_{\substack{xz = m\\x,z\ {\rm even},\ y\ {\rm odd}\\0\leq y <z}} f\left(\frac{x\tau +y}{z}\right).&
\end{alignat*}
Then we have
\begin{gather}\label{tildeTFormula}
 {\bf \tilde{T}}_m=
 \begin{cases}
 {\bf R}_m +{\bf R}_{2m} -{\bf U}_{2m}-{\bf D}_{2m},& m\ \hbox{\rm even},\\
 {\bf R}_m,& m\ \hbox{\rm odd},
 \end{cases}
 \end{gather}
 since when $m$ is even the second term of ${\bf \tilde{T}}_m$ excludes the cases when $x$ is odd and when $z$ is odd which are disjoint.

We also have
\begin{gather}\label{OEven}
 {\bf O}_{4m} = {\bf R}_{4m} -{\bf R}_m -{\bf U}_{4m} - {\bf D}_{4m},
\end{gather}
since the cases when $x$, $y$ and $z$ are all even, when $x$ is odd and when $z$ is odd must be excluded from the sum in ${\bf R}_{4m}$ and these are disjoint.

If $m$ is odd then
\begin{gather}\label{OOdd}
 {\bf O}_{2m} = 0 = {\bf R}_{2m} - {\bf U}_{2m} - {\bf D}_{2m},
\end{gather}
since in this case either $x$ or $z$ is odd (but not both).

For ${\bf T}_m$ we have the following
\begin{gather*}
 \sum_{\substack{\divides{r^2}{m}\\r\ {\rm odd}}} {\bf T}_{m/r^2} =
 \begin{cases}
 {\bf O}_{2m}+{\bf U}_m + {\bf D}_m, & m\ {\rm even},\\
 {\bf R}_m, & m\ {\rm odd}.
 \end{cases}
 \end{gather*}

 This follows from the terms in $M^m$ in each case and the fact that the condition that the $gcd$ of the entries is 1 for each element of~$M^m$. In particular, this establishes the required result if~$m$ is odd.

 We also have
 \begin{gather}\label{tildeT2mFormula}
 {\bf \tilde{T}}_{2m}-{\bf \tilde{T}}_m = {\bf O}_{4m} + {\bf U}_{2m}+ {\bf D}_{2m}.
 \end{gather}

 This follows from (\ref{tildeTFormula}) as follows. If $m$ is even then
 \begin{gather*}
 {\bf \tilde{T}}_{2m} - {\bf \tilde{T}}_m = {\bf R}_{2m}+{\bf R}_{4m} - {\bf U}_{4m} - {\bf D}_{4m} -{\bf R}_{m}-{\bf R}_{2m} + {\bf U}_{2m} + {\bf D}_{2m} \\
 \hphantom{{\bf \tilde{T}}_{2m} - {\bf \tilde{T}}_m}{} = {\bf R}_{4m} -{\bf R}_m - {\bf U}_{4m} - {\bf D}_{4m} + {\bf U}_{2m} + {\bf D}_{2m} ={\bf O}_{4m} +{\bf U}_{2m} + {\bf D}_{2m}
 \end{gather*}
 using (\ref{OEven}). While if $m$ is odd then
\begin{gather*}
 {\bf \tilde{T}}_{2m} - {\bf \tilde{T}}_m = {\bf R}_{2m}+{\bf R}_{4m} - {\bf U}_{4m} - {\bf D}_{4m} -{\bf R}_m\\
 \hphantom{{\bf \tilde{T}}_{2m} - {\bf \tilde{T}}_m}{} = {\bf R}_{2m} +{\bf O}_{4m} ={\bf O}_{4m} +{\bf U}_{2m} + {\bf D}_{2m}
 \end{gather*}
 using (\ref{OOdd}).

 To establish the proposition if $m$ is even, we have from (\ref{tildeT2mFormula})
 \begin{gather*}
 {\bf \tilde{T}}_m = {\bf O}_{2m} +{\bf U}_m + {\bf D}_m+{\bf \tilde{T}}_{m/2} = \sum_{\divides{r^2}{m}} {\bf T}_{m/r^2} + {\bf \tilde{T}}_{m/2}.
 \end{gather*}

 The result now follows by induction on $\alpha$ where $m = 2^\alpha \beta$ with $\beta$ odd.
\end{proof}

\subsection[$(2{+})$-replicability]{$\boldsymbol{(2{+})}$-replicability}\label{2Arepl}

The operators ${\bf T}_m$ and ${\bf \tilde{T}}_m$ just defined map the field of modular functions for $\Gp$ to itself. If~$T_{2A}$ is the normalized Hauptmodul of $\Gp$ this means that ${\bf T}_m(T_{2A})(z)$ is a rational function of~$T_{2A}(z)$ and since ${\bf T}_m(T_{2A})(z)$ has no poles in the upper half-plane this rational function is actually a polynomial. From the power series expansion we can see that it has to be the $m$-th Faber polynomial of $T_{2A}$. We have just said that
\begin{gather}\label{2Aself}
 P_{m}(T_{2A}(\tau))= \sum_{\substack{ad=m\\0\leq b<d}} T_{2A}\left(\frac{a\tau+b}{d} \right)+ \sum_{\substack{ad=m \\ d \ \text{even} \\ 0\leq b<d}} T_{2A}\left(\frac{2a\tau+b}{d}\right).
\end{gather}
As discussed in the introduction, we will show that the situation is entirely analogous to the situation with ${\rm PSL}(2,\intn)$, the monster and ``ordinary'' replication. For $\Gp$ and the baby monster we will show
that the appropriate generalization is the following:
\begin{Definition}
A function $f$ is \tp-replicable if there are $f^{[n]}$ and $f^{[n\sqrt{2}]}$, for $n\in \mathbb{N}$, such that
\begin{gather}\label{def2A}
P_{n,f}(f)= \sum_{\substack{ad=n\\0\leq b<d}} f^{\left[a\right]}\left(\frac{a\tau+b}{d} \right)+ \sum_{\substack{ad=n \\ d \ \text{even} \\ 0\leq b<d}} f^{\left[a\sqrt{2}\right]}\left(\frac{2a\tau+b}{d}\right).
\end{gather}
\end{Definition}

Equation (\ref{2Aself}) is the \tp-self-replication property of $T_{2A}$.
We make a few remarks on this definition.

\begin{Remark} Given a \tp-replicable function $f$, its \tp-replicates are not determined uniquely. For example, for $m=2$, equation (\ref{def2A}) becomes
\begin{gather*}
f^{[2]}(2\tau)+f^{[\sqrt{2}]}(\tau)+f^{[\sqrt{2}]}\left(\tau+\frac{1}{2}\right)+f\left(\frac{\tau}{2}\right)+f\left(\frac{\tau+1}{2}\right)=P_{2,f}(f),
\end{gather*}
and we can see that $f^{[2]}$ is known when $f$ and $f^{[\sqrt{2}]}$ are known. Also, the odd-power coefficients of the replicates $f^{[\sqrt{2}n]}$ can be changed freely and identity (\ref{def2A}) is still true. We will see instances of \tp-replicable functions that are \tp-replicable in different ways, i.e., have different \tp-replicates. For example, in Table~\ref{Table3} the trace functions for the $2\cdot\mathbb{B}$ classes~$2e$ and $4d$ are both $T_{4C}$, the Hauptmodul for $\Gamma_0(4)$ which corresponds to the class $4C$ in the monster. However, for the classes $2e$ and $4d$, $f^{[\sqrt{2}]}$ is given by the monstrous functions $T_{2B}$ and $T_{4C}$ respectively. In other words there are two \tp-replication structures which are compatible with the Hauptmodul $T_{4C}$ and both of these occur for the baby monster. One might think that for ``ordinary'' replication there is in principle a similar lack of uniqueness in the definition of replicates. For example, the monstrous classes $27A$ and $27B$ have the same trace functions (in fact they are the only rational classes with this property). But both cube to~$9B$. More generally, as pointed out by Norton~\cite{N2}, the ``ordinary'' replicates of a replicable function are unique. So for ``ordinary'' replication we have a unique ``power map'' structure.
\end{Remark}

\begin{Remark}\label{equivrep2Arep}
If a function $f$ is \tp-replicable then it is replicable with
\begin{gather*}
f^{(n)}(z)= \begin{cases}
 f^{[n]}(z), & n \ \mbox{odd}, \\
 \displaystyle f^{[n]}(z)+f^{[\frac{n}{\sqrt{2}}]}\left(\frac{z}{2}\right)+f^{[\frac{n}{\sqrt{2}}]}\left(\frac{z+1}{2}\right), & n \ \mbox{even}. \\
 \end{cases}
\end{gather*}

Also, we can see that if $f$ is replicable and if, for every $n$ even, we can write
\begin{gather*}
f^{(n)}(z)=f^{[n]}(z)+f^{[\frac{n}{\sqrt{2}}]}\left(\frac{z}{2}\right)+f^{[\frac{n}{\sqrt{2}}]}\left(\frac{z+1}{2}\right),
\end{gather*}
for some $f^{[n]}$, $f^{[\frac{n}{\sqrt{2}}]}$, then $f$ is \tp-replicable. For $n$ odd we obviously have $f^{[n]}=f^{(n)}$. This is shown by the following simple manipulation
\begin{gather*}
 \sum_{\substack{ad=n \\ 0\leq b < d}} f^{(a)}\left(\frac{az+b}{d}\right)
 = \sum_{\substack{ad=n \\ a \ \text{odd} \\0\leq b < d}} f^{(a)}\left(\frac{az+b}{d}\right)+\sum_{\substack{ad=n \\ a \ \text{even} \\0\leq b < d}} f^{(a)}\left(\frac{az+b}{d}\right) \\
\hphantom{\sum_{\substack{ad=n \\ 0\leq b < d}} f^{(a)}\left(\frac{az+b}{d}\right)}{}
=\sum_{\substack{ad=n \\ a \ \text{odd} \\0\leq b < d}} f^{[a]}\left(\frac{az+b}{d}\right)+ \sum_{\substack{ad=n \\ a \ \text{even} \\0\leq b < d}} f^{[a]}\left(\frac{az+b}{d}\right) \\
\hphantom{\sum_{\substack{ad=n \\ 0\leq b < d}} f^{(a)}\left(\frac{az+b}{d}\right)=}{} +\sum_{\substack{ad=n \\ a \ \text{even} \\0\leq b < d}} f^{[\sqrt{2}\frac{a}{2}]}\left(\frac{az+b}{2d}\right)+\sum_{\substack{ad=n \\ a \ \text{even} \\0\leq b < d}} f^{[\sqrt{2}\frac{a}{2}]}\left(\frac{az+b+d}{2d}\right) \\
\hphantom{\sum_{\substack{ad=n \\ 0\leq b < d}} f^{(a)}\left(\frac{az+b}{d}\right)}{}
 =\sum_{\substack{ad=n \\0\leq b < d}} f^{[a]}\left(\frac{az+b}{d}\right)+\sum_{\substack{ad=n \\ a \ \text{even} \\0\leq b < 2d}} f^{[\sqrt{2}\frac{a}{2}]}\left(\frac{2\left(\frac{a}{2}\right)z+b}{2d}\right)\\
\hphantom{\sum_{\substack{ad=n \\ 0\leq b < d}} f^{(a)}\left(\frac{az+b}{d}\right)}{}
 =\sum_{\substack{ad=n \\0\leq b < d}} f^{[a]}\left(\frac{az+b}{d}\right)+\sum_{\substack{ad=n \\ d \ \text{even} \\0\leq b < d}} f^{[\sqrt{2}a]}\left(\frac{2az+b}{d}\right).
\end{gather*}

Finally, if $f$ is replicable then $f$ is always \tp-replicable by taking, for example, $f^{[n\sqrt{2}]}=0$.

Note that this is in contrast with ordinary replication for which the replication equations uniquely fix the replicates.
\end{Remark}

\begin{Remark} Because of Remark~\ref{equivrep2Arep} what we are actually interested in is finding the possible \tp-replicables for a given \tp-replicable function. For example, we could ask if a \tp-replicable function is completely \tp-replicable in the sense given below. There are examples of Hauptmoduls that are not complete replicable functions but are completely \tp-replicable. As we will see, some examples of these are $T_{4\sim b}$, $T_{12\sim d}$ among many others (see~\cite{N3} for the notation).
\end{Remark}

For the monster, since replicates corresponds to power maps, the replicable functions sa\-tisfy a stronger property called complete replicability. Namely that the replicate functions are themselves replicable and that
taking replicates is ``commutative''. We make a corresponding definition
for \tp-replicability.
\begin{Definition}
We say that a function $f$ is completely \tp-replicable if
\begin{itemize}\itemsep=0pt
\item[--] it is \tp-replicable, with replicates $f^{[n]}$, $n\in \nat \cup\sqrt{2}\nat$, and
\item[--] for every $n\in \nat\cup \sqrt{2}\nat$ the function $f^{[n]}$ is \tp-replicable with \tp-replicates $\big(f^{[n]}\big)^{[m]}=f^{[mn]}$, for any $m\in \nat \cup\sqrt{2}\nat$.
\end{itemize}
\end{Definition}

In the following section we show that certain Hauptmoduls are completely \tp-replicable. The list will include almost all the Hauptmoduls which occur in moonshine for the baby monster. The proof of \tp-replicability for the remaining baby monstrous moonshine functions will be given at the end of Section~\ref{BMandrep}.

\section[Complete $(2+)$-replicability and Hauptmoduls]{Complete $\boldsymbol{(2+)}$-replicability and Hauptmoduls}\label{CRHauptmoduls}

The following result is Theorem~5.15 in Ferenbaugh's Ph.D.\ Thesis \cite{Fer} and will be useful in proving that certain Hauptmoduls that are not completely replicable are completely \tp-replicable.
We refer to \cite{CN} and \cite{FMN} for the notation on the Atkin--Lehner involutions $W_e$ and on groups of the form $n\,|\, h+e_1,e_2,\ldots$ and $n\parallel h+e_1,e_2,\ldots$.

\begin{Theorem}\label{Feren}
Let $p$ be prime and $f$ be the Hauptmodul for some group $np\,|\, h+e_1,e_2\ldots$ with $p\nmid h$. Then
\begin{itemize}\itemsep=0pt
\item[--] if $p$ is one of the $e_1$, $e_2,\ldots$ then $f^{(p)}(z)=f(z)+f\big(\frac{z}{p}\big)+\dots+f\big(\frac{z+p-1}{p}\big)$,
\item[--] if $p$ is not one of the $e_1, e_2,\ldots$ but $ep$ is, for some $e$ with $p\!\nmid \!e$, then $f^{(p)}(W_ez)=f(W_pz)+f\big(\frac{z}{p}\big)+\dots+f\big(\frac{z+p-1}{p}\big)$,
\item[--] if $ep$ is one of the $e_1$, $e_2,\ldots$ with $p\,|\, e$ then $f^{(p)}(W_ez)=f\big(\frac{z}{p}\big)+\dots+f\big(\frac{z+p-1}{p}\big)$.
\end{itemize}
\end{Theorem}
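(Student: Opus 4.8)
The plan is to reduce the statement to the ordinary prime-$p$ replication equation and then to recognize each claimed $f^{(p)}$ as coming from a Hecke coset sum for the group $G=np\,|\,h+e_1,e_2,\dots$. Recall that $P_{p,f}(f)$ is, by definition, the unique polynomial in the Hauptmodul $f$ whose $q$-expansion is $q^{-p}+O(q)$; moreover any $G$-invariant function whose only singularity is a pole at the cusp $\infty$ is automatically a polynomial in $f$, since $f$ generates the function field of the genus zero group $G$. For prime $p$ the replication equation reads
\[
f^{(p)}(p\tau)+\sum_{b=0}^{p-1}f\Big(\frac{\tau+b}{p}\Big)=P_{p,f}(f),
\]
so, writing $\Phi(\tau)$ for the left-hand side with $f^{(p)}$ replaced by the formula claimed in the relevant case, it suffices to show that $\Phi$ is $G$-invariant, holomorphic on the upper half-plane, and has $q$-expansion $q^{-p}+O(q)$; any such function must equal $P_{p,f}(f)$.

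The central step is to realize $\Phi$ as a Hecke coset sum $\sum_j f(\gamma_j\tau)$ attached to a double coset $G\sigma G$ with $\det\sigma=p$, using the general construction recalled at the start of Section~\ref{heckeandrep}. Invariance under $G$ is then immediate, since right multiplication by $g\in G$ permutes the left cosets $G\gamma_j$; and, provided the Hauptmodul is taken with vanishing constant term (the standard moonshine normalization), the coset sum has $q$-expansion $q^{-p}+O(q)$, forced by the shape of the representatives. Thus everything reduces to checking, in each of the three cases, that substituting the claimed $f^{(p)}$ into the left-hand side of the replication equation reproduces exactly the single-coset representatives of the appropriate double coset.

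The three cases are distinguished by how $p$ meets the Atkin--Lehner group of $G$. When $W_p\in G$ (Case~1) the ``raising'' representative $\left[\begin{smallmatrix}p&0\\0&1\end{smallmatrix}\right]$ is $G$-equivalent, via $W_p$, to further cosets, and expanding $f^{(p)}(p\tau)=f(p\tau)+\sum_{b=0}^{p-1}f\big(\tau+\tfrac{b}{p}\big)$ shows that $\Phi$ is precisely the resulting $G$-invariant coset sum; this is exactly the shape of the \tp-self-replication identity~(\ref{2Aself}) for $T_{2A}$, which is the instance $G=\Gp$, $p=2$. When $p\notin\{e_i\}$ but $ep\in\{e_i\}$ with $p\nmid e$ (Case~2), conjugating the Case~1 computation by $W_e$ twists the identity: the role of $f(p\tau)$ is taken over by $f(W_pz)$ and the whole relation is naturally expressed at $W_ez$, which is why $f^{(p)}$ appears evaluated at $W_ez$. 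When $p\,\vert\,e$ (Case~3) the raising representative is already $G$-equivalent to the ``lowering'' cosets $\left[\begin{smallmatrix}1&b\\0&p\end{smallmatrix}\right]$, so no extra term survives and $f^{(p)}(W_ez)$ reduces to the bare sum $\sum_{b=0}^{p-1}f\big(\tfrac{z+b}{p}\big)$.

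The main obstacle is the coset bookkeeping. For each configuration one must write down explicit single-coset representatives of the relevant double coset of $G=np\,|\,h+\dots$, check that the generators of $G$ (the generators of $\Gamma_0(n|h)$ together with the Atkin--Lehner involutions $W_{e_i}$, in the notation of \cite{CN,FMN}) permute them, verify holomorphy at the remaining cusps, and correctly track the $W_e$-conjugation in Cases~2 and~3. The hypothesis $p\nmid h$ is what makes this clean: it forces $p$ to enter the width data in the expected way and guarantees that the three listed configurations of $p$ relative to $\{e_i\}$ are exhaustive. Once the representatives are determined in each case, both the invariance and the $q$-expansion assertions collapse to finite, mechanical verifications.
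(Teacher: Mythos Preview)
The paper does not prove this statement: it is quoted as Theorem~5.15 of Ferenbaugh's thesis \cite{Fer} and used thereafter as a black box. So there is no proof in the paper to compare your attempt against.

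Your outline is sound in strategy and in fact mirrors what the paper itself carries out in Section~\ref{gamma2} for the single special case $G=\Gp$, $p=2$: there the authors perform exactly the coset bookkeeping you describe, obtain the five-term Hecke sum~(\ref{BabyT2}), and deduce the self-replication identity~(\ref{2Aself}), which is precisely Case~1 of this theorem for $f=T_{2A}$. Recognizing the candidate left-hand side $\Phi$ as the action on $f$ of a double coset of $G$, then using genus zero to force $\Phi=P_{p,f}(f)$, is the correct mechanism.

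That said, what you have written is an outline rather than a proof. The actual content of Ferenbaugh's argument is the part you explicitly defer as ``mechanical'': for a general group $np\,|\,h+e_1,\ldots$ one must produce explicit single-coset representatives, verify closure under the generators of $\Gamma_0(n|h)$ and under each $W_{e_i}$, and check holomorphy at every cusp (your first reduction ``$G$-invariant, holomorphic on the upper half-plane, $q$-expansion $q^{-p}+O(q)$'' is not quite sufficient when $G$ has several cusps, as you later acknowledge). The hypothesis $p\nmid h$ and the trichotomy on the $p$-content of the Atkin--Lehner set are doing real work in that analysis. In Cases~2 and~3 the appearance of $W_e$ is also more delicate than ``conjugate Case~1 by $W_e$'': one must track how $W_e$ interacts with the coset representatives and then identify the resulting invariant function with the Hauptmodul $f^{(p)}$ of the correct smaller group $n\,|\,h+\ldots$. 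None of this is difficult, but none of it is carried out here; the proposal correctly locates where the work lies without doing it.
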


Remark \ref{equivrep2Arep} gives motivation to find formulas of type $A(z)=B(z)+C\big(\frac{z}{2}\big)+C\big(\frac{z+1}{2}\big)$. This will help us finding \tp-replicates for certain Hauptmoduls.
Before we proceed, we need some notation in order to separate even Atkin--Lehner involutions from the odd ones.

From now on, any group $n\,|\, h+e,f,\ldots$ will be written as $n\,|\, h+O_1+2^kO_2$ where $2^k$ is the highest power of $2$ that divides $\frac{n}{h}$, the set $O_1$ is the set of odd Atkin--Lehner involutions of the group and $O_2$ is the set of even Atkin--Lehner involutions of the group divided by $2^k$. We always include $1$ in $O_1$.

For example, the group $30+6$, $10$, $15$ will be written as $30+O_1+2O_2$, with $O_1= \{1,15 \}$ and $O_2=\left\{3,5\right\}$.

We should also note that the elements of both $O_1$ and $O_2$ are odd, and these sets can satisfy $O_1=O_2$ as, for example, in $6+1$, $2$, $3$, $6$, where $O_1=O_2=\{1,3\}$.

Recall, $G^\alpha$ denotes $
\left[\begin{smallmatrix}
1 & \alpha \\
0 & 1 \\
\end{smallmatrix}\right]
G
\left[\begin{smallmatrix}
1 & \alpha \\
0 & 1 \\
\end{smallmatrix}\right]^{-1}$.

\begin{Remark}\label{conj1/2}
If the matrix
$\left[\begin{smallmatrix}
1 & \frac{1}{2} \\
0 & 1 \\
\end{smallmatrix}\right]$
does not normalize the genus zero group $2^kN\,|\,h+O_1+2^kO_2$, with $N$ odd, then
\begin{gather*}
T_{(2^kN|h+O_1+2^kO_2)^{\frac{1}{2}}}(z)=-T_{2^kN|h+O_1+2^kO_2}\left(z+\frac{1}{2}\right).
\end{gather*}

If it normalizes, we have
\begin{gather*}
T_{2^kN|h+O_1+2^kO_2}\left(z+\frac{1}{2}\right)=-T_{2^kN|h+O_1+2^kO_2}(z),
\end{gather*}
and this happens exactly when either $h$ is even or $h=1$, $k\geq 2$ and $O_2$ is empty.

Also, this matrix conjugates the groups $2N+O_1$ and $4N+O_1+4O_1$ onto each other, when $(2,N)=1$.
\end{Remark}

We make the following observation which will be used several times in what follows.
\begin{Remark}
For a Hauptmodul $f$ of a genus zero group $G$, $\alpha=\frac{1}{2},\frac{1}{4},\frac{1}{8}$ and $n$ odd we have
\begin{gather*}
\big(f^{\alpha}\big)^{(n)}=\big(f^{(n)}\big)^{\alpha}.
\end{gather*}

In particular, $\big(\big(f^{\alpha}\big)^{(m)}\big)^{(n)}=\big(f^{\alpha}\big)^{(mn)}$, for $m$, $n$ odd.
\end{Remark}

\begin{Lemma}\label{decomp}
We have the following identities:
\begin{enumerate}\itemsep=0pt
 \item[$1.$] If $(2,N) = 1$ then
 \begin{gather*}
 T_{N+O_1}(z)=T_{2N+O_1+2O_1}(z)+T_{2N+O_1+2O_1}\left(\frac{z}{2}\right)+T_{2N+O_1+2O_1}\left(\frac{z+1}{2}\right)
 \end{gather*}
 or, equivalently,
 \begin{gather*}
T_{2N+O_1+2O_1}(z)=T_{N+O_1}(z)+T_{(2N+O_1+2O_1)^\frac{1}{2}}\left(\frac{z}{2}\right)+ T_{(2N+O_1+2O_1)^\frac{1}{2}}\left(\frac{z+1}{2}\right).
 \end{gather*}
\item[$2.$]
If $(2,N)=1$, $k\geq 1$ and
 $O_2$ is not empty then
\begin{gather*}
T_{2^kN+O_1}(z)=T_{2^kN+O_1+2^kO_2}(z)+T_{(2^{k+1}N+O_1+2^{k+1}O_2)^\frac{1}{2}}\left(\frac{z}{2}\right)\\
\hphantom{T_{2^kN+O_1}(z)=}{} +T_{(2^{k+1}N+O_1+2^{k+1}O_2)^\frac{1}{2}}\left(\frac{z+1}{2}\right).
\end{gather*}

In particular, if $k=1$ and $O_1=O_2$ we have
 \begin{gather*}
T_{2N+O_1}(z)=T_{2N+O_1+2O_1}(z)+T_{2N+O_1}\left(\frac{z}{2}\right)+T_{2N+O_1}\left(\frac{z+1}{2}\right).
 \end{gather*}

\item[$3.$]
 If $(2,N)=1$, $k\geq 1$ and $O_2$ is not empty then
\begin{gather*}
T_{2^k N+O_1+2^kO_2}(z)=T_{2^k N+O_1}(z)+T_{2^{k+1}N+O_1+2^{k+1}O_2}\left(\frac{z}{2}\right)\\
\hphantom{T_{2^k N+O_1+2^kO_2}(z)=}{} +T_{2^{k+1}N+O_1+2^{k+1}O_2}\left(\frac{z+1}{2}\right).
\end{gather*}
\end{enumerate}
\end{Lemma}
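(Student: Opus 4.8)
The plan is to read all three identities as instances of the shape $A(z)=B(z)+C\big(\tfrac z2\big)+C\big(\tfrac{z+1}2\big)$ flagged just after Theorem~\ref{Feren}, and to produce each one from that theorem applied with $p=2$: for $p=2$ the replication sum $f\big(\tfrac z2\big)+\cdots+f\big(\tfrac{z+p-1}{p}\big)$ of Theorem~\ref{Feren} is exactly $f\big(\tfrac z2\big)+f\big(\tfrac{z+1}2\big)$, so the three cases of the theorem correspond to the three ways in which $2$ can sit relative to the even Atkin--Lehner involutions $2^kO_2$ of the groups involved. Throughout I would use that each Hauptmodul $T_{2^kN+\cdots}$ (with $h=1$) has period $1$ in $z$, and I would convert every Hauptmodul of a $\tfrac12$-conjugated group into an honest Atkin--Lehner Hauptmodul by Remark~\ref{conj1/2}.

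For part~1 I take $f=T_{2N+O_1+2O_1}$. Since $1\in O_1$ always, $2=2\cdot 1$ lies in the even involutions $2O_1$, so $p=2$ is itself an Atkin--Lehner involution and the first case of Theorem~\ref{Feren} gives $f^{(2)}(z)=f(z)+f\big(\tfrac z2\big)+f\big(\tfrac{z+1}2\big)$. The first displayed identity then amounts to the replicate identification $f^{(2)}=T_{N+O_1}$, which I would confirm from the replicate-group rules (the $2$-replicate halves the level and deletes the even involutions) together with the leading $q$-coefficients. For the equivalent second form I would invoke Remark~\ref{conj1/2}: here $h=1$, the relevant $2$-power is $k=1$ and $O_2=O_1\neq\emptyset$, so $\left[\begin{smallmatrix} 1 & \frac{1}{2} \\ 0 & 1 \end{smallmatrix}\right]$ does not normalize $2N+O_1+2O_1$ and $T_{(2N+O_1+2O_1)^{1/2}}(z)=-f\big(z+\tfrac12\big)$. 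Substituting this and using period $1$ ($-f\big(\tfrac z2+1\big)=-f\big(\tfrac z2\big)$) turns the two doubling terms into $-f\big(\tfrac z2\big)-f\big(\tfrac{z+1}2\big)$, so the right side of the second form becomes $T_{N+O_1}(z)-f\big(\tfrac z2\big)-f\big(\tfrac{z+1}2\big)=f(z)$ by the first form.

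For parts~2 and~3 I would first show they are equivalent to one another, again through Remark~\ref{conj1/2}: writing $D=T_{2^{k+1}N+O_1+2^{k+1}O_2}$, the group of $D$ has $h=1$, $2$-power $k+1\ge 2$ and $O_2\neq\emptyset$, so the $\tfrac12$-matrix does not normalize it and $T_{(2^{k+1}N+O_1+2^{k+1}O_2)^{1/2}}(z)=-D\big(z+\tfrac12\big)$. Feeding this into part~2 and using period $1$ rewrites its $\tfrac12$-conjugated doubling terms as $-D\big(\tfrac z2\big)-D\big(\tfrac{z+1}2\big)$, and after rearranging both parts~2 and~3 reduce to the single identity $D\big(\tfrac z2\big)+D\big(\tfrac{z+1}2\big)=T_{2^kN+O_1+2^kO_2}(z)-T_{2^kN+O_1}(z)$. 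To prove this I apply the third case of Theorem~\ref{Feren} to $D$ with $e=2^{k}o$ for some $o\in O_2$: then $ep=2^{k+1}o\in 2^{k+1}O_2$ is an even involution and $p=2\mid e$ (as $k\ge 1$), giving $D^{(2)}(W_e z)=D\big(\tfrac z2\big)+D\big(\tfrac{z+1}2\big)$, and it remains to identify $D^{(2)}(W_e z)$ with the difference $T_{2^kN+O_1+2^kO_2}(z)-T_{2^kN+O_1}(z)$. The special case $k=1$, $O_1=O_2$ is cleaner: the last part of Remark~\ref{conj1/2} says the $\tfrac12$-matrix conjugates $2N+O_1$ onto $4N+O_1+4O_1$, so $T_{(4N+O_1+4O_1)^{1/2}}=T_{2N+O_1}$ is a genuine Hauptmodul and no sign or shift is needed, which is exactly the ``in particular'' formula.

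The main obstacle is the last identification in each part: matching the replicate $f^{(2)}$, respectively the twisted replicate $D^{(2)}(W_e\cdot)$, with the prescribed Hauptmodul or difference of Hauptmoduls on the left. Theorem~\ref{Feren} produces the correct shape for free, but pinning down which modular function the replicate is requires the replicate-group bookkeeping for Atkin--Lehner groups---tracking how the level, the value of $h$, and the Fricke and even/odd Atkin--Lehner involutions transform under $2$-replication and under the $W_e$-twist---and then checking invariance group, cusp behaviour, and leading $q$-coefficient. That the answer in parts~2 and~3 is a difference of two Hauptmoduls rather than a single normalized one is forced by the cancellation of the $q^{-1}$ pole at $\infty$ once $W_e$ moves the pole of $D^{(2)}$ to another cusp, and is the one place where care with normalizations is needed; everything else is routine once the groups are correctly identified.
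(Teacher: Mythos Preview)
Your approach is the same as the paper's: part~1 is Theorem~\ref{Feren}, first case, applied to $f=T_{2N+O_1+2O_1}$ with $p=2$ (together with $f^{(2)}=T_{N+O_1}$), and parts~2 and~3 are shown to be equivalent via Remark~\ref{conj1/2} and then deduced from Theorem~\ref{Feren}, third case, applied to $D=T_{2^{k+1}N+O_1+2^{k+1}O_2}$ with $e'=2^ko$ for some $o\in O_2$.

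Where you and the paper differ is only in how the ``main obstacle'' you flag is discharged. You propose to identify $D^{(2)}(W_{e'}z)$ with $T_{2^kN+O_1+2^kO_2}(z)-T_{2^kN+O_1}(z)$ by tracking invariance groups, cusps and leading coefficients. The paper avoids this bookkeeping with a one-line averaging argument: since $D^{(2)}=T_{2^kN+O_1}$ (the standard $2$-replicate rule halves the level and drops the even Atkin--Lehner involutions), the left side is $T_{2^kN+O_1}(W_{e'}z)$; and because $W_{e'}$ normalizes $2^kN+O_1$ and adjoining it generates exactly $2^kN+O_1+2^kO_2$ (as $O_2$ is a single $O_1$-coset), the sum $T_{2^kN+O_1}(z)+T_{2^kN+O_1}(W_{e'}z)$ is invariant under the larger group with a simple pole at~$\infty$ and leading term $q^{-1}$, hence equals $T_{2^kN+O_1+2^kO_2}(z)$. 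Rearranging gives the difference you need. Your direct verification would also work, but the symmetrization identity $T_G+T_G\circ W=T_{\langle G,W\rangle}$ is the cleaner route.
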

\begin{proof} The first identity is Theorem \ref{Feren} applied to $f=T_{2N+O_1+2O_1}$ and $p=2$. The third identity is also a consequence of Theorem \ref{Feren} applied to $f=T_{2^{k+1}N+O_1+2^{k+1}O_2}$ and $p=2$. Since $2^k\parallel N$, for $k\geq 1$, we have
\begin{gather*}T_{2^{k+1}N+O_1+2^{k+1}O_2}\left(\frac{z}{2}\right)+T_{2^{k+1}N+O_1+2^{k+1}O_2}\left(\frac{z+1}{2}\right)=T_{2^kN+O_1}\left(W_{e'}z\right),\end{gather*}
where $e'=2^ke$ for some $e\in O_2$.
But \begin{gather*}T_{2^kN+O_1}(z)+T_{2^kN+O_1}(W_{e'}z)=T_{2^kN+O_1+2^kO_2}(z)\end{gather*} and this proves the third identity. The second identity is the third one written in a different way.
\end{proof}

\begin{table}[t!]
\centering
\scalebox{0.725}{
\begin{tabular}{|c||c|c|c|c|}
\hline
$g$ & $2N+O_1+2O_2$ &$4N+O_1+4O_2$ & $4N+O_1+4O_2$ & $8N+O_1+8O_2$ \\ \hline
$g^{[\sqrt{2}]}$ & $2N+O_1+2O_1$ &$2N+O_1$ & $4N+O_1$ & $4N+O_1$ \\ \hline
$g^{[2]}$ & $2N+O_1+2O_1$ &$2N+O_1+2O_1$ & $2N+O_1$ & $4N+O_1$ \\ \hline
$g^{[2\sqrt{2}]}$ & $ \cdots$ &$2N+O_1+2O_1$ & $2N+O_1+2O_1$ & $2N+O_1$ \\ \hline
$g^{[4]}$ & & $\cdots$ & $2N+O_1+2O_1$ & $2N+O_1+2O_1$ \\ \hline
$g^{[4\sqrt{2}]}$ & & & $\cdots$ & $2N+O_1+2O_1$ \\ \hline
$\vdots$ & & & & $\cdots$ \\ \hline
\hline
$g$ & $8N+O_1+8O_2$ & $16N+O_1+16O_2$ & $16N+O_1+16O_2$ &$32N+O_1+32O_2$ \\ \hline
$g^{[\sqrt{2}]}$ & $8N+O_1$ & $8N+O_1$ & $16N+O_1$ &$16N+O_1 $ \\ \hline
$g^{[2]}$ & $4N+O_1$ & $8N+O_1$ & $8N+O_1$ &$16N+O_1 $ \\ \hline
$g^{[2\sqrt{2}]}$ & $4N+O_1$ & $4N+O_1$ &$8N+O_1$ &$8N+O_1 $ \\ \hline
$g^{[4]}$ & $2N+O_1$ & $4N+O_1$ &$4N+O_1$ &$8N+O_1 $ \\ \hline
$g^{[4\sqrt{2}]}$ & $2N+O_1+2O_1$ & $2N+O_1$ &$4N+O_1$ &$4N+O_1 $ \\ \hline
$g^{[8]}$ & $2N+O_1+2O_1$ & $2N+O_1+2O_1$ &$2N+O_1$ &$4N+O_1 $ \\ \hline
$g^{[8\sqrt{2}]}$ & $\cdots$ & $2N+O_1+2O_1$ &$2N+O_1+2O_1$ &$2N+O_1 $ \\ \hline
$g^{[16]}$ & & $\cdots$ &$2N+O_1+2O_1$ &$2N+O_1+2O_1$ \\ \hline
$g^{[16\sqrt{2}]}$ & & &$\cdots$ &$2N+O_1+2O_1$ \\ \hline
$\vdots$ & & & &$\cdots$ \\ \hline
\hline
$g$ &$2N|2+O_1$ &$4N|2+O_1+2O_2 $ & $8N|2+O_1+4O_2$ &$8N|2+O_1+4O_2$ \\ \hline
$g^{[\sqrt{2}]}$ &$2N+O_1+2O_1$ &$4N+O_1+4O_2 $ & $4N|2+O_1+2O_2$ &$8N+O_1+8O_2$ \\ \hline
$g^{[2]}$ &$2N+O_1+2O_1$ &$2N+O_1$ & $4N+O_1+4O_2$ &$4N+O_1$ \\ \hline
$g^{[2\sqrt{2}]}$ &$\cdots$ &$2N+O_1+2O_1$ & $2N+O_1$ &$4N+O_1$ \\ \hline
$g^{[4]}$ & &$2N+O_1+2O_1$ & $2N+O_1+2O_1$ &$2N+O_1$ \\ \hline
$g^{[4\sqrt{2}]}$ & &$\cdots$ & $2N+O_1+2O_1$ &$2N+O_1+2O_1$ \\ \hline
$g^{[8]}$ & & & $\cdots$ &$2N+O_1+2O_1$ \\ \hline
$\vdots$ & & & & $\cdots$ \\ \hline
\hline
$g$ &$16N|2+O_1+8O_2$ &$16N|2+O_1+8O_2$ &$32N|2+O_1+16O_2$ & $32N|2+O_1+16O_2$ \\ \hline
$g^{[\sqrt{2}]}$ &$8N|2+O_1+4O_2$ &$16N+O_1+16O_2$ &$16N|2+O_1+8O_2$ & $32N+O_1+32O_2$ \\ \hline
$g^{[2]}$ &$8N+O_1+8O_2$ &$8N+O_1$ &$16N+O_1+16O_2$ & $16N+O_1$ \\ \hline
$g^{[2\sqrt{2}]}$ &$4N+O_1$ &$8N+O_1$ &$8N+O_1$ & $16N+O_1$ \\ \hline
$g^{[4]}$ &$4N+O_1$ &$4N+O_1$ &$8N+O_1$ & $8N+O_1$ \\ \hline
$g^{[4\sqrt{2}]}$ &$2N+O_1$ &$4N+O_1$ &$4N+O_1$ & $8N+O_1$ \\ \hline
$g^{[8]}$ &$2N+O_1+2O_1$ &$2N+O_1$ &$4N+O_1$ & $4N+O_1$ \\ \hline
$g^{[8\sqrt{2}]}$ &$2N+O_1+2O_1$ &$2N+O_1+2O_1$ &$2N+O_1$ & $4N+O_1$ \\ \hline
$g^{[16]}$ &$\cdots$ &$2N+O_1+2O_1$ &$2N+O_1+2O_1$ & $2N+O_1$ \\ \hline
$g^{[16\sqrt{2}]}$ & &$\cdots$ &$2N+O_1+2O_1$ & $2N+O_1+2O_1$ \\ \hline
$g^{[32]}$ & & &$\cdots$ & $2N+O_1+2O_1$ \\ \hline
$\vdots$ & & & & $\cdots$ \\ \hline
\hline
$g$ & $8N|4+O_1+2O_2$ & $16N|4+O_1+4O_2$ & $16N|4+O_1+4O_2$ & $32N|4+O_1+8O_2$ \\ \hline
$g^{[\sqrt{2}]}$ & $8N|2+O_1+4O_2$ & $16N|2+O_1+8O_2$ & $8N|4+O_1+2O_2$ & $32N|2+O_1+16O_2$ \\ \hline
$g^{[2]}$ & $4N|2+O_1+2O_2$ & $8N|2+O_1+4O_2$ & $8N|2+O_1+4O_2$ & $16N|2+O_1+8O_2$ \\ \hline
$g^{[2\sqrt{2}]}$ & $4N+O_1+4O_2$ & $8N+O_1+8O_2$ & $4N|2+O_1+2O_2$ & $16N+O_1+16O_2$ \\ \hline
$g^{[4]}$ & $2N+O_1$ & $4N+O_1$ & $4N+O_1+4O_2$ & $8N+O_1$ \\ \hline
$g^{[4\sqrt{2}]}$ & $2N+O_1+2O_1$ & $4N+O_1$ & $2N+O_1$ & $8N+O_1$ \\ \hline
$g^{[8]}$ & $2N+O_1+2O_1$ & $2N+O_1$ & $2N+O_1+2O_1$ & $4N+O_1$ \\ \hline
$g^{[8\sqrt{2}]}$ & $\cdots$ & $2N+O_1+2O_1$ & $2N+O_1+2O_1$ & $4N+O_1$ \\ \hline
$g^{[16]}$ & & $2N+O_1+2O_1$ & $\cdots$ & $2N+O_1$ \\ \hline
$g^{[16\sqrt{2}]}$ & & $\cdots$ & & $2N+O_1+2O_1$ \\ \hline
$g^{[32]}$ & & & & $2N+O_1+2O_1$ \\ \hline
$\vdots$ & & & & $\cdots$ \\ \hline
\hline
$g$ & $4N+O_1$ & $4N|2+O_1+2O_1$ & $4N|2+O_1$ & $(4N+O_1+4O_2)^{\frac{1}{2}}$ \\ \hline
$g^{[\sqrt{2}]}$ & $(4N+O_1+4O_2)^\frac{1}{2}$ & $(2N+O_1+2O_1)^\frac{1}{2}$ & $(4N+O_1+4O_2)^\frac{1}{2}$ & $2N+O_1+2O_2$ \\ \hline
$g^{[2]}$ & $2N+O_1+2O_2$ & $N+O_1$ & $2N+O_1+2O_2$ & $2N+O_1+2O_1$ \\ \hline
$g^{[2\sqrt{2}]}$ & $2N+O_1+2O_1$ & $2N+O_1+2O_1$ & $2N+O_1+2O_1$ & $2N+O_1+2O_1$ \\ \hline
$g^{[4]}$ & $2N+O_1+2O_1$ & $2N+O_1+2O_1$ & $2N+O_1+2O_1$ & $\cdots$ \\ \hline
$\vdots$ & $\cdots$ & $\cdots$ & $\cdots$ & \\ \hline
\end{tabular}}
\vspace{-1.4mm}

\caption{\tp-replicates of Hauptmoduls.}\label{Table1}
\end{table}

\begin{table}[t!]\centering
\scalebox{0.725}{
\begin{tabular}{|c||c|c|c|c|}
\hline
$g$ & $8N|2+O_1$ & $8N|4+O_1+2O_1$ & $8N|2+O_1$ & $8N|4+O_1$ \\ \hline
$g^{[\sqrt{2}]}$ & $(8N+O_1+8O_2)^\frac{1}{2}$ & $(4N|2+O_1+2O_1)^\frac{1}{4}$ & $(8N|2+O_1+4O_1)^\frac{1}{4}$ & $(8N|2+O_1+4O_1)^\frac{1}{4}$ \\ \hline
$g^{[2]}$ & $4N+O_1+4O_2$ & $4N|2+O_1+2O_1$ & $4N+O_1$ & $4N|2+O_1$ \\ \hline
$g^{[2\sqrt{2}]}$ & $4N+O_1$ & $(2N+O_1+2O_1)^\frac{1}{2}$ & $(4N+O_1+4O_1)^\frac{1}{2}$ & $(4N+O_1+4O_1)^\frac{1}{2}$ \\ \hline
$g^{[4]}$ & $2N+O_1 $ & $N+O_1 $ & $2N+O_1+2O_1$ & $2N+O_1+2O_1$ \\ \hline
$g^{[4\sqrt{2}]}$ & $2N+O_1+2O_1$ & $2N+O_1+2O_1$ & $2N+O_1+2O_1$ & $2N+O_1+2O_1$ \\ \hline
$g^{[8]}$ & $2N+O_1+2O_1$ & $2N+O_1+2O_1$ & $2N+O_1+2O_1$ & $2N+O_1+2O_1$ \\ \hline
$\vdots$ & $\cdots$ & $\cdots$ & $\cdots$ & $\cdots$ \\ \hline
\hline
$g$ & $(2N+O_1+2O_1)^{\frac{1}{2}}$ & $N+O_1$ & $(8N+O_1+8O_2)^\frac{1}{2}$ & $(8N+O_1+8O_2)^{\frac{1}{2}}$ \\ \hline
$g^{[\sqrt{2}]}$ & $N+O_1 $ & $2N+O_1+2O_1$ & $4N+O_1+4O_2$ & $8N+O_1+8O_2$ \\ \hline
$g^{[2]}$ & $2N+O_1+2O_1$ & $2N+O_1+2O_1$ & $4N+O_1$ & $4N+O_1$ \\ \hline
$g^{[2\sqrt{2}]}$ & $2N+O_1+2O_1$ & $\cdots$ & $2N+O_1$ & $2N+O_1$ \\ \hline
$g^{[4]}$ & $\cdots$ & & $2N+O_1+2O_1$ & $2N+O_1+2O_1$ \\ \hline
$g^{[4\sqrt{2}]}$ & & & $2N+O_1+2O_1$ & $2N+O_1+2O_1$ \\ \hline
$\vdots$ & & & $\cdots$ & $\cdots$ \\ \hline
\hline
$g$ & $(16N+O_1+16O_2)^{\frac{1}{2}}$ & $(32N+O_1+32O_2)^{\frac{1}{2}}$ & $(4N|2+O_1+2O_1)^{\frac{1}{4}}$ & $(8N|2+O_1+4O_2)^{\frac{1}{4}}$ \\ \hline
$g^{[\sqrt{2}]}$ & $8N+O_1+8O_2$ & $16N+O_1+16O_2$ & $4N|2+O_1+2O_1$ &$4N|2+O_1$ \\ \hline
$g^{[2]}$ & $8N+O_1$ & $16N+O_1$ & $(2N+O_1+2O_1)^{\frac{1}{2}}$ &$(4N+O_1+4O_2)^{\frac{1}{2}}$ \\ \hline
$g^{[2\sqrt{2}]}$ & $4N+O_1$ & $8N+O_1$ & $N+O_1$ &$2N+O_1+2O_2$ \\ \hline
$g^{[4]}$ & $4N+O_1$ & $8N+O_1$ & $2N+O_1+2O_1$ &$2N+O_1+2O_1$ \\ \hline
$g^{[4\sqrt{2}]}$ & $2N+O_1$ & $4N+O_1$ & $2N+O_1+2O_1$ &$2N+O_1+2O_1$ \\ \hline
$g^{[8]}$ & $2N+O_1+2O_1$ & $4N+O_1$ & $\cdots$ &$\cdots$ \\ \hline
$g^{[8\sqrt{2}]}$ & $2N+O_1+2O_1$ & $2N+O_1$ & & \\ \hline
$g^{[16]}$ & $\cdots$ & $2N+O_1+2O_1$ & & \\ \hline
$g^{[16\sqrt{2}]}$ & & $2N+O_1+2O_1$ & & \\ \hline
$\vdots$ & & $\cdots$ & & \\ \hline
\hline
$g$ & $(16N|2+O_1+8O_1)^{\frac{1}{4}}$ & $(8N|4+O_1+2O_1)^{\frac{1}{8}}$ & $(8N|4+O_1+2O_1)^{\frac{1}{8}}$ \\ \cline{1-4}
$g^{[\sqrt{2}]}$ & $8N|2+O_1$ & $8N|4+O_1+2O_1$ & $8N|2+O_1+2O_1$ \\ \cline{1-4}
$g^{[2]}$ & $(8N+O_1+8O_1)^{\frac{1}{2}}$ & $(4N|2+O_1+2O_1)^{\frac{1}{4}}$ & $(4N|2+O_1+2O_1)^{\frac{1}{4}}$ \\ \cline{1-4}
$g^{[2\sqrt{2}]}$ & $4N+O_1+4O_1$ & $4N|2+O_1+2O_1$ & $4N|2+O_1+2O_1$ \\ \cline{1-4}
$g^{[4]}$ & $4N+O_1$ & $(2N+O_1+2O_1)^{\frac{1}{2}}$ & $(2N+O_1+2O_1)^{\frac{1}{2}}$ \\ \cline{1-4}
$g^{[4\sqrt{2}]}$ & $2N+O_1$ & $N+O_1$ & $N+O_1$ \\ \cline{1-4}
$g^{[8]}$ & $2N+O_1+2O_1$ & $2N+O_1+2O_1$ & $2N+O_1+2O_1$ \\ \cline{1-4}
$g^{[8\sqrt{2}]}$ & $2N+O_1+2O_1$ & $2N+O_1+2O_1$ & $2N+O_1+2O_1$ \\ \cline{1-4}
$g^{[16]}$ & $\cdots$ & $\cdots$ & $\cdots$ \\ \cline{1-4}
\end{tabular}}

\caption{\tp-replicates of Hauptmoduls (continued).}\label{Table2}
\end{table}

\begin{Theorem}\label{repl3AHaupt}
Every function $g$ of type as given in Tables~{\rm \ref{Table1}} and~{\rm \ref{Table2}} is \tp-replicable. The $\big[2^{\frac{n}{2}}\big]$-replicates, for $n$ a non-negative integer, are the ones given in those tables and replicates $g^{[2^{\frac{n}{2}}m]}$, for~$n$ a non-negative integer and $m$ a positive odd integer, are given by $g^{[2^{\frac{n}{2}}m]}=\big(g^{[2^{\frac{n}{2}}]}\big)^{(m)}$.
\end{Theorem}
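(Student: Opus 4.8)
The plan is to verify the hypotheses of Remark~\ref{equivrep2Arep}, which reduces \tp-replicability of a replicable function $g$ to producing, for every even $n$, a decomposition
\begin{gather*}
g^{(n)}(z)=g^{[n]}(z)+g^{\left[\frac{n}{\sqrt2}\right]}\left(\frac{z}{2}\right)+g^{\left[\frac{n}{\sqrt2}\right]}\left(\frac{z+1}{2}\right),
\end{gather*}
the odd-index replicates being forced to agree with the ordinary replicates, $g^{[n]}=g^{(n)}$. Thus the theorem amounts to two things: that every function listed as a ``$g$'' in Tables~\ref{Table1} and~\ref{Table2} is replicable, and that, with the table entries taken as the proposed $\big[\sqrt2^{\,n}\big]$-replicates, these even-index identities hold.

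First I would dispose of replicability. The functions appearing as $g$ are Hauptmoduls of genus zero groups $n\,|\,h+\cdots$, or conjugates of such by $\left[\begin{smallmatrix}1&\alpha\\0&1\end{smallmatrix}\right]$ with $\alpha\in\big\{\tfrac12,\tfrac14,\tfrac18\big\}$. For the unconjugated groups replicability is the theorem of~\cite{CuN}. For the conjugated ones I would use Remark~\ref{conj1/2} to rewrite each as $\pm T_G\big(z+\tfrac{c}{2^k}\big)$ for a genuine moonshine Hauptmodul $T_G$; this only multiplies the $q$-coefficients by signs, keeping them rational and preserving the $q^{-1}$ normalization, so these functions are replicable as well.

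Next comes the reduction of the even-index identities to the purely dyadic case $n=2^a$. Writing a general even index as $2^am$ with $m$ odd and $a\ge 1$, the prescription $g^{[2^am]}=\big(g^{[2^a]}\big)^{(m)}$ and $g^{[2^am/\sqrt2]}=\big(g^{[2^a/\sqrt2]}\big)^{(m)}$ lets me obtain the $2^am$ identity by applying ordinary odd replication $(m)$ to the $2^a$ identity. This step rests on two facts about \emph{odd} replication, which remains well behaved even when $g$ is not completely replicable: that $\big(g^{(2^a)}\big)^{(m)}=g^{(2^am)}$ (the odd factor $m$ being coprime to the even part), and that odd replication commutes with the ``de-$2$'' operation $C\mapsto C\big(\tfrac z2\big)+C\big(\tfrac{z+1}2\big)$, which follows from the compatibility $\big(f^{\alpha}\big)^{(n)}=\big(f^{(n)}\big)^{\alpha}$ for odd $n$ noted just before Lemma~\ref{decomp}. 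After this reduction it remains only to check, for each family, the chain of dyadic identities $g^{(2^a)}(z)=g^{[2^a]}(z)+g^{[2^a/\sqrt2]}(z/2)+g^{[2^a/\sqrt2]}((z+1)/2)$.

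These dyadic identities are exactly the decompositions recorded in Lemma~\ref{decomp}, which are themselves instances of Ferenbaugh's Theorem~\ref{Feren} at the prime $p=2$, together with the conjugation relations of Remark~\ref{conj1/2}. So the final and most laborious step is a family-by-family match: for each block of columns one reads off the consecutive entries $g^{[2^a/\sqrt2]}$ and $g^{[2^a]}$ and verifies that the triple $\big(g^{(2^a)},g^{[2^a]},g^{[2^a/\sqrt2]}\big)$ realizes one of the three cases of Lemma~\ref{decomp}, keeping careful track of the level, the quantity $h$, the power of $2$ dividing $n/h$, and the splitting of the Atkin--Lehner set into $O_1$ and $2^kO_2$. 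I expect this bookkeeping—confirming that the group symbols in each successive table row are precisely those produced by the relevant case of Lemma~\ref{decomp}, including the correct $\tfrac12,\tfrac14,\tfrac18$ conjugates in the rows headed by conjugated groups—to be the main obstacle, the earlier steps being essentially formal.
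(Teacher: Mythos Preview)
Your approach is essentially the paper's for the completely replicable functions in the tables: use complete replicability to identify each $g^{(2^m)}$ as a Hauptmodul of known type, invoke Lemma~\ref{decomp} or Remark~\ref{conj1/2} to split it, and observe that odd replication preserves the shape of these identities so that the general even index follows. The paper's case tables (i)--(iii) are exactly the bookkeeping you anticipate.

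The gap is in the conjugated columns of Tables~\ref{Table1} and~\ref{Table2}, those headed by groups of the form $G^{1/2}$, $G^{1/4}$, $G^{1/8}$. These $g$ are replicable by~\cite{CuN}, as you say, but they are \emph{not} completely replicable, and two ingredients of your argument break down. First, the reduction ``$(g^{(2^a)})^{(m)}=g^{(2^am)}$ for $m$ odd'' is precisely a fragment of complete replicability; the remark before Lemma~\ref{decomp} gives $\big((f^{\alpha})^{(m)}\big)^{(n)}=(f^{\alpha})^{(mn)}$ only for $m,n$ \emph{both odd}, and says nothing about $(g^{(2^a)})^{(m)}$. Second, and more seriously, your endgame ``verify that the triple $\big(g^{(2^a)},g^{[2^a]},g^{[2^a/\sqrt2]}\big)$ realizes one of the cases of Lemma~\ref{decomp}'' presupposes that you can identify $g^{(2^a)}$ as a Hauptmodul of one of the standard shapes $2^kN\,|\,h+O_1+2^kO_2$. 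For the conjugated $g$ no such identification is available a priori: the even replicates of, say, $T_{(4N+O_1+4O_2)^{1/2}}$ are uniquely determined by replicability but are not handed to you in any form that Lemma~\ref{decomp} recognizes.

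The paper handles the conjugated cases by a different manoeuvre that avoids ever naming $g^{(n)}$. Writing $g=f^{\alpha}$ with $f$ completely replicable (so $f$ is already known to be \tp-replicable by the first half of the proof), one substitutes $z\mapsto z+\alpha$ directly into the \tp-replication identity for $f$, uses $P_{n,f}\big(f(z+\tfrac12)\big)=(-1)^{n}P_{n,g}(g(z))$, and then rearranges the right-hand side term by term. The summands with $a$ even are unaffected by the shift, while the $a$ odd block is rewritten by exploiting the freedom in the choice of $f^{[\sqrt2 a]}$ together with another application of Lemma~\ref{decomp}. This produces the \tp-replication identity for $g$ with the replicates listed in the tables, without any appeal to the ordinary even replicates of~$g$.
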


\begin{proof} We start by proving the result for completely replicable functions and then use this to complete the proof for conjugates of completely replicable functions. For the cases where the function is completely replicable we know how replication works and for each $g^{(2^m)}$ we will either apply Lemma~\ref{decomp} or, in view of Remark~\ref{equivrep2Arep}, take $g^{(2^m)}$ as $g^{[2^m]}$ and any function satisfying the condition in Remark~\ref{conj1/2} as $g^{[\frac{2^m}{\sqrt{2}}]}$. This defines the $(2+)$-replicates $g^{[m]}$, with~$m$ a power of~$\sqrt{2}$. Having done so, and because of Remark~\ref{equivrep2Arep} again, it is enough in order to finish the proof in this case to show that, for all positive $m$ and odd $n$, we have
\begin{gather}\label{aaa}
 g^{(2^mn)}(z)=\left(g^{[2^m]}\right)^{(n)}(z)+\left(g^{[\frac{2^m}{\sqrt{2}}]}\right)^{(n)}\left(\frac{z}{2}\right)+\left(g^{[\frac{2^m}{\sqrt{2}}]}\right)^{(n)}\left(\frac{z+1}{2}\right).
\end{gather}

 This is a case by case check on every $g^{(2^m)}$.

1.~Our first case is when we are using some identity from Lemma \ref{decomp} to decompose $g^{(2^m)}$. In this case, we must have $g^{(2^m)}=2^kN+O_1+2^kO_2$, i.e., $h=1$, and $g^{(2^mn)}=\left(g^{(2^m)}\right)^{(n)}=2^kN'+O'_1+2^kO'_2$ where $N'=N/\gcd(N,n)$, $O'_1=\{e\in O_1 \,|\, e \mbox{ divides } N'\}$ and $O'_2=\{e\in O_2 \,|\, e \mbox{ divides } N'\}$. We fix an odd $n$ and divide this case further into three different subcases.

i)~If $O_2=\varnothing$ (which implies $O'_2=\varnothing$) then the possibilities, depending on $k$, for $g^{[2^m]}$ and $g^{[\frac{2^m}{\sqrt{2}}]}$ when we apply Lemma \ref{decomp} to decompose $g^{(2^m)}$ are
\begin{gather*}\begin{array}{|c|c|c|c|}
 \hline
 & g^{(2^m)} & g^{[2^m]} & g^{[\frac{2^m}{\sqrt{2}}]} \\\hline
 k=0 & N+O_1 & 2N+O_1+2O_1 & 2N+O_1+2O_1 \\\hline
 k=1 & 2N+O_1 & 2N+O_1+2O_1 & 2N+O_1 \\\hline
 k\geq1 & 2^kN+O_1 & 2^kN+O_1+2^kO_2 & \left(2^{k+1}N+O_1+2^{k+1}O_2\right)^{\frac{1}{2}} \\\hline
 \end{array}
 \end{gather*}

 Applying $(n)$-replication to every entry in the table we obtain
\begin{gather*} \begin{array}{|c|c|c|c|}
 \hline
 & g^{(2^mn)} & \big(g^{[2^m]}\big)^{(n)} & \big(g^{[\frac{2^m}{\sqrt{2}}]}\big)^{(n)} \\\hline
 k=0 & N'+O'_1 & 2N'+O'_1+2O'_1 & 2N'+O'_1+2O'_1 \\\hline
 k=1 & 2N'+O'_1 & 2N'+O'_1+2O'_1 & 2N'+O'_1 \\\hline
 k\geq1 & 2^kN'+O'_1 & 2^kN'+O'_1+2^kO'_2 & \left(2^{k+1}N'+O'_1+2^{k+1}O'_2\right)^{\frac{1}{2}} \\\hline
 \end{array}
 \end{gather*}

 We can see that every row in the table corresponds to some identity from Lemma \ref{decomp} and identity (\ref{aaa}) is satisfied in this case.

ii) If $O_2\neq\varnothing$ and $O'_2\neq\varnothing$ then the possibilities, depending on $k$, for $g^{[2^m]}$ and $g^{[\frac{2^m}{\sqrt{2}}]}$ when we apply Lemma~\ref{decomp} to decompose $g^{(2^m)}$ are
\begin{gather*}
 \begin{array}{|c|c|c|c|}
 \hline
 & g^{(2^m)} & g^{[2^m]} & g^{[\frac{2^m}{\sqrt{2}}]} \\\hline
 k=1 & 2N+O_1+2O_1 & N+O_1 & (2N+O_1+2O_1 )^{\frac{1}{2}} \\\hline
 k\geq1 & 2^kN+O_1+2^kO_2 & 2^kN+O_1 & 2^{k+1}N+O_1+2^{k+1}O_2 \\\hline
 \end{array}\end{gather*}

 Applying $(n)$-replication to every entry in the table we obtain
\begin{gather*}\begin{array}{|c|c|c|c|}
 \hline
 & g^{(2^mn)} & \big(g^{[2^m]}\big)^{(n)} & \big(g^{[\frac{2^m}{\sqrt{2}}]}\big)^{(n)} \\\hline
 k=1 & 2N'+O'_1+2O'_1 & N'+O'_1 & (2N'+O'_1+2O'_1 )^{\frac{1}{2}} \\\hline
 k\geq1 & 2^kN'+O'_1+2^kO'_2 & 2^kN'+O'_1 & 2^{k+1}N'+O'_1+2^{k+1}O'_2 \\\hline
 \end{array}\end{gather*}

 We can see that every row in the table corresponds to some identity from Lemma \ref{decomp} and identity \eqref{aaa} is satisfied in this case.

iii) If $O_2\neq\varnothing$ and $O'_2=\varnothing$ then the possibilities, depending on $k$, for $g^{[2^m]}$ and $g^{[\frac{2^m}{\sqrt{2}}]}$ when we apply Lemma~\ref{decomp} to decompose $g^{(2^m)}$ are
\begin{gather*}
 \begin{array}{|c|c|c|c|}
 \hline
 & g^{(2^m)} & g^{[2^m]} & g^{[\frac{2^m}{\sqrt{2}}]} \\\hline
 k=1 & 2N+O_1+2O_1 & N+O_1 & (2N+O_1+2O_1 )^{\frac{1}{2}} \\\hline
 k\geq1 & 2^kN+O_1+2^kO_2 & 2^kN+O_1 & 2^{k+1}N+O_1+2^{k+1}O_2 \\\hline
 \end{array}\end{gather*}

 Applying $(n)$-replication to every entry in the table we obtain
\begin{gather*}
 \begin{array}{|c|c|c|c|c|}
 \hline
 & g^{(2^mn)} & \big(g^{[2^m]}\big)^{(n)} & \big(g^{[\frac{2^m}{\sqrt{2}}]}\big)^{(n)} \\\hline
 k=1 & 2N' & N' & (2N' )^{\frac{1}{2}} \\\hline
 k\geq1 & 2^kN'+O'_1 & 2^kN'+O'_1 & 2^{k+1}N'+O'_1 \\\hline
 \end{array}\end{gather*}

We can see that the first row corresponds to an identity from Lemma~\ref{decomp} and in the last row the Hauptmodul $2^{k+1}N'+O'_1$ satisfies the condition of Remark~\ref{conj1/2}. Identity~\eqref{aaa} is satisfied in both cases.

2.~If we are taking $g^{[2^m]}=g^{(2^m)}$ and $g^{[\frac{2^m}{\sqrt{2}}]}$ to be any Hauptmodul satisfying the condition in Remark~\ref{conj1/2} then identity~\eqref{aaa} is trivially satisfied because odd replication preserves the property of Remark \ref{conj1/2}.

 This completes the proof in the case $g$ is completely replicable.

The proof also shows that in Tables~\ref{Table1} and~\ref{Table2} we can take any $2^kN\,\parallel\, h+O_1+2^kO_2$ instead of $2^kN\,|\,h+O_1+2^kO_2$ and the result is still valid.

For the remaining cases in the tables, $g$ is not completely replicable, but its invariance group is conjugate by
$\left[\begin{smallmatrix}
1 & \alpha \\
0 & 1 \\
\end{smallmatrix}\right]$, with $\alpha=\frac{1}{2},\frac{1}{4}$ or $\frac{1}{8}$, to some group whose Hauptmodul $f$ is completely replicable. We shall give the proof for the case $\alpha=\frac{1}{2}$ as the other two
 cases are similar.

 Let $g=T_G$ and $f=T_{G^\alpha}$ so that $g(z)=-f\big(z+\frac{1}{2}\big)$. Since we are assuming $f$ is completely replicable it follows, as remarked above, that $f$ is \tp-replicable. So for some~-- not necessarily unique~-- \tp-replicates we have
 \begin{gather}
P_{n,f}(f(z))= \sum_{\substack{ad=n \\ 0\leq b <d}} f^{[a]}\left(\frac{az+b}{d}\right)+ \sum_{\substack{ad=n \\ d \ \text{even}\\ 0\leq b <d}} f^{[\sqrt{2}a]}\left(\frac{2az+b}{d}\right).
 \label{ftprep}
 \end{gather}

 Our aim is to use the relationship between $f$ and $g$ to show that (\ref{ftprep}) implies that $g$ is \tp-replicable after a suitable choice of \tp-replicates for~$f$.

 Now by the definition of the Faber polynomials, for any series $h(z) = \frac{1}{q} + \cdots$ we have $P_{n,h}\big(h\big(z+\frac{1}{2}\big)\big)=(-1)^nP_{n,h^\frac{1}{2}}\big(h^\frac{1}{2}(z)\big)$, where $h^\frac{1}{2}(z)=-h\big(z+\frac{1}{2}\big)$.

 In particular applying this to $f$ and $g$ gives $P_{n,f}\big(f\big(z+\frac{1}{2}\big)\big)=(-1)^nP_{n,g}(g)$. So for $n$ odd, substituting $z$ by $z+\frac{1}{2}$ in~(\ref{ftprep}) gives
 \begin{gather*}
 P_{n,g}(g(z)) =- \sum_{\substack{ad=n\\ 0\leq b <d}} f^{[a]}\left(\frac{az+b}{d}+\frac{a}{2d}\right)
 = \sum_{\substack{ad=n\\ 0\leq b <d}} (f^{[a]})^{\frac{1}{2}}\left(\frac{az+b+\frac{a-d}{2}}{d}\right)\\
\hphantom{P_{n,g}(g(z)) =}{} = \sum_{\substack{ad=n\\ 0\leq b <d}} (f^{[a]})^{\frac{1}{2}}\left(\frac{az+b}{d}\right).
 \end{gather*}

 So these replication identities for $g$ are satisfied by choosing $g^{[a]} = \big(f^{[a]}\big)^{\frac{1}{2}}$ for $a$ odd.

For $n$ even, we again substitute $z$ by $z+\frac{1}{2}$ in (\ref{ftprep}). Arranging the resulting right hand side by the highest power of 2 which divides $a$ gives
\begin{gather}
 P_{n,g}(g(z)) = \sum_{\substack{ad=n \\a \ \text{odd}\\ 0\leq b <d}} f^{[a]}\left(\frac{az+b}{d}+\frac{a}{2d}\right)+ \sum_{\substack{ad=n \\a \ \text{odd}\\ d \ \text{even}\\ 0\leq b <d}} f^{[\sqrt{2}a]}\left(\frac{2az+b}{d}\right) \nonumber\\
\hphantom{P_{n,g}(g(z)) =}{} + \sum_{i=1}^{\infty}\left(\sum_{\substack{ad=\frac{n}{2^i}\\ a \ \text{odd} \\ 0\leq b <d}}
 f^{[2^ia]}\left(\frac{2^iaz+b}{d}\right)
 + \sum_{\substack{2^iad=n \\ a \ \text{odd} \\d \ \text{even}\\ 0\leq b <d}}
 f^{[2^i\sqrt{2}a]}\left(\frac{2^{i+1}az+b}{d}\right)\right).\label{i=0}
\end{gather}

So if we set $g^{[2^\frac{n}{2}m]}=f^{[2^\frac{n}{2}m]}$ for $n\geq 2$ these replication identities for $g$ will be satisfied provided we can show that the $i=0$ term
\begin{gather}\label{zeroterm}
 \sum_{\substack{ad=n \\ a \ \text{odd}\\ 0\leq b <d}} f^{[a]}\left(\frac{az+b}{d}+\frac{a}{2d}\right)+ \sum_{\substack{ad=n \\a \ \text{odd}\\ d \ \text{even}\\ 0\leq b <d}} f^{[\sqrt{2}a]}\left(\frac{2az+b}{d}\right)
\end{gather}
has the form{\samepage
\begin{gather}\label{gform}
 \sum_{\substack{ad=n \\a \ \text{odd}\\ 0\leq b <d}} g^{[a]}\left(\frac{az+b}{d}\right)+ \sum_{\substack{ad=n \\a \ \text{odd}\\ d \ \text{even}\\ 0\leq b <d}} g^{[\sqrt{2}a]}\left(\frac{2az+b}{d}\right)
\end{gather}
with $g^{[a]}$ as given above and for suitable choices of the $g^{[\sqrt{2}a]}$.}

The proof is by considering cases. First consider $g=T_{(2N+O_1+2O_1)^\frac{1}{2}}$, thus $f=T_{2N+O_1+2O_1}$. Then each $f^{[a]}$ in (\ref{zeroterm}) is of type $T_{2M+O'_1+2O'_1}$, where $M=N/\gcd(N,a)$ and $O'_1=\{e\in O_1 \,|\, e \mbox{ divides } M\}$ and we know from Theorem~\ref{Feren} that in this case
\begin{gather*}
T_{2M+O'_1+2O'_1}\left(\frac{z}{2}\right)+T_{2M+O'_1+2O'_1}\left(\frac{z+1}{2}\right)+T_{2M+O'_1+2O'_1}(z)=T_{M+O'_1}(z).
\end{gather*}

In particular, substituting $z$ by $\frac{2az+b}{d}$ and summing over $b$ we have
\begin{gather*}
 \sum_{0\leq b<2d} T_{2M+O'_1+2O'_1}\left(\frac{az}{d}+\frac{b}{2d}\right)+\sum_{0\leq b<d}T_{2M+O'_1+2O'_1}\left(\frac{2az+b}{d}\right)\\
 \qquad{} =\sum_{0\leq b<d}T_{M+O'_1}\left(\frac{2az+b}{d}\right)
\end{gather*}
or, equivalently,
\begin{gather}
 \sum_{0\leq b<d} T_{2M+O'_1+2O'_1}\left(\frac{az+b}{d}+\frac{1}{2d}\right)+ \sum_{0\leq b<d} T_{2M+O'_1+2O'_1}\left(\frac{2az+b}{d}\right)\nonumber\\
\qquad{} =-\sum_{0\leq b<d}T_{2M+O'_1+2O'_1}\left(\frac{az+b}{d}\right)+\sum_{0\leq b<d}T_{M+O'_1}\left(\frac{2az+b}{d}\right)\nonumber\\
\qquad{} =\sum_{0\leq b<d}T_{(2M+O'_1+2O'_1)^{\frac{1}{2}}}\left(\frac{az+b}{d}+\frac{1}{2}\right)+\sum_{0\leq b<d}T_{M+O'_1}\left(\frac{2az+b}{d}\right)\nonumber\\
\qquad{} =\sum_{0\leq b<d}T_{(2M+O'_1+2O'_1)^{\frac{1}{2}}}\left(\frac{az+b}{d}\right)+\sum_{0\leq b<d}T_{M+O'_1}\left(\frac{2az+b}{d}\right). \label{fform}
\end{gather}

As mentioned above, the \tp-replicates of $f$ in (\ref{ftprep}) are not necessarily unique and we are free to make any convenient choices. So we now choose $f^{[\sqrt{2}a]}=T_{2M+O'_1+2O'_1}$. Identity~(\ref{fform}) then shows that (\ref{zeroterm}) has the form (\ref{gform}), with $g^{[a]}=\big(T_{2M+O'_1+2O'_1}\big)^{(\frac{1}{2})}$ and $g^{[\sqrt{2}a]}=T_{M+O'_1}$, as required.

\looseness=-1 For the second part of the theorem we just note that when $n\geq 2$ we have $g^{[2^{\frac{n}{2}}m]}=f^{[2^{\frac{n}{2}}m]}=\big(f^{[2^{\frac{n}{2}}]}\big)^{(m)}=\big(g^{[2^{\frac{n}{2}}]}\big)^{(m)}$, when $n=1$ we have $g^{[\sqrt{2}m]}=T_{M+O'_1}=\big(T_{N+O_1}\big)^{(m)}=\big(g^{[\sqrt{2}]}\big)^{(m)}$ and when $n=0$ we have \begin{gather*} g^{[m]}=\big(T_{2M+O'_1+2O'_1}\big)^{\frac{1}{2}}=\big(\big(T_{2N+O_1+2O_1}\big)^{(m)}\big)^{\frac{1}{2}}
=\big(\big(T_{2N+O_1+2O_1}\big)^{\frac{1}{2}}\big)^{(m)}=g^{(m)}.
\end{gather*}

This finishes the proof for functions of the form $T_{(2N+O_1+2O_1)^\frac{1}{2}}$.

We now consider $g$ of the form $T_{(2^kN+O_1+2^kO_2)^\frac{1}{2}}$, with $k\geq 2$. In this case, the $f^{[a]}$ in~(\ref{i=0}) are equal to $T_{2^kM+O'_1+2^kO'_2}$, where $M=N/\gcd(N,a)$, $O'_1=\{e\in O_1 \,|\, e \mbox{divides } M\}$ and $O'_2=\{e\in O_2 \,|\, e \mbox{ divides } M\}$ and applying Lemma~\ref{decomp} to this function we have
\begin{gather*}
T_{2^kM+O'_1+2^kO'_2}\left(\frac{z}{2}\right)+T_{2^kM+O'_1+2^kO'_2}\left(\frac{z+1}{2}\right)+T_{2^{k-1}M+O'_1}(z)=T_{2^{k-1}M+O'_1+2^{k-1}O'_2}(z).
\end{gather*}

Substituting $z$ by $\frac{2az+b}{d}$ and summing over $b$ we obtain
\begin{gather*}
\begin{split}&
 \sum_{0\leq b<2d}T_{2^kM+O'_1+2^kO'_2}\left(\frac{az}{d}+\frac{b}{2d}\right)+\sum_{0\leq b<d}T_{2^{k-1}M+O'_1}\left(\frac{2az+b}{d}\right)\\
 &\qquad{} =\sum_{0\leq b<d}T_{2^{k-1}M+O'_1+2O'_2}\left(\frac{2az+b}{d}\right) \end{split}
\end{gather*}
and, in particular,
\begin{gather*}
\sum_{0\leq b<d}{} T_{2^kM+O'_1+2^kO'_2}\left(\frac{az+b}{d} +\frac{1}{2d}\right)+\sum_{0\leq b<d}T_{2^{k-1}M+O'_1}\left(\frac{2az+b}{d}\right) \\
 \qquad{} =-\sum_{0\leq b<d}T_{2^kM+O'_1+2^kO'_2}\left(\frac{az+b}{d}\right)+\sum_{0\leq b<d}T_{2^{k-1}M+O'_1+2^{k-1}O'_2}\left(\frac{2az+b}{d}\right)\\
 \qquad{}= \sum_{0\leq b<d}T_{(2^kM+O'_1+2^kO'_2)^\frac{1}{2}}\left(\frac{az+b}{d}+\frac{1}{2}\right)+\sum_{0\leq b<d}T_{2^{k-1}M+O'_1+2^{k-1}O'_2}\left(\frac{2az+b}{d}\right)\\
 \qquad{} =\sum_{0\leq b<d}T_{(2^kM+O'_1+2^kO'_2)^\frac{1}{2}}\left(\frac{az+b}{d}\right)+\sum_{0\leq b<d}T_{2^{k-1}M+O'_1+2^{k-1}O'_2}\left(\frac{2az+b}{d}\right).
\end{gather*}

\looseness=-1 As mentioned above, the \tp-replicates of $f$ in (\ref{ftprep}) are not necessarily unique and we are free to make any convenient choices. So we now choose $f^{[\sqrt{2}a]}$ of the form $T_{2^{k-1}M+O'_1}$. Identity~(\ref{fform}) then shows that~(\ref{zeroterm}) has the form~(\ref{gform}), with $g^{[a]}=\big(T_{2M+O'_1+2O'_2}\big)^{\frac{1}{2}}$ and $g^{[\sqrt{2}a]}=T_{M+O'_1}$, as required.

For the second part of the theorem we just note that when $n\geq 2$ we have $g^{[2^{\frac{n}{2}}m]}=f^{[2^{\frac{n}{2}}m]}=\big(f^{[2^{\frac{n}{2}}]}\big)^{(m)}\!=\big(g^{[2^{\frac{n}{2}}]}\big)^{(m)}$, when $n=1$ we have $g^{[\sqrt{2}m]}=T_{2^{k-1}M+O'_1+2^{k-1}O'_2}=\big(T_{2^{k-1}N+O_1+2^kO_2}\big)^{(m)}\!$ $=\big(g^{[\sqrt{2}]}\big)^{(m)}$ and when $n=0$ we have $g^{[m]}=\big(T_{2^kM+O'_1+2^kO'_2}\big)^{\frac{1}{2}}=\big(\big(T_{2^kN+O_1+2^kO_2}\big)^{(m)}\big)^{\frac{1}{2}}=\big(\big(T_{2^kN+O_1+2^kO_2}\big)^{\frac{1}{2}}\big)^{(m)}=g^{(m)}$.

The remaining cases can be done similarly. For Hauptmoduls of groups of the form $G^{\frac{1}{4}}$ (resp.~$G^{\frac{1}{8}}$) it is the summand corresponding to $i=1$ (resp.\ $i=2$) that matters. We just note that the replicates $f^{[\sqrt{2}a]}$ (resp.\ $f^{[\sqrt{2}a]}$ and $f^{[2\sqrt{2}a]}$) with $a$ odd, have a power series expansion with coefficients of even powers of $q$ equal to zero. This means that any other function with the same property will work as well. We just chose those particular ones because they give complete \tp-replicability.
\end{proof}

\begin{Corollary}\label{subcolumn}
The functions $g$ on Tables~{\rm \ref{Table1}} and~{\rm \ref{Table2}} are completely \tp-replicable.
\end{Corollary}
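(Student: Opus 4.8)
The plan is to leverage Theorem~\ref{repl3AHaupt}, which already gives that every $g$ in Tables~\ref{Table1} and~\ref{Table2} is \tp-replicable with the replicates listed there: the power-of-$\sqrt2$ replicates $g_j:=g^{[2^{j/2}]}$ read down each column (with $g_0=g$), together with $g^{[2^{j/2}m]}=\big(g_j\big)^{(m)}$ for $m$ odd. What remains for complete \tp-replicability is to check two things: that each replicate $g^{[a]}$ is itself \tp-replicable, and that the replicates compose, i.e.\ $\big(g^{[a]}\big)^{[b]}=g^{[ab]}$ for all $a,b\in\nat\cup\sqrt2\,\nat$. The first is immediate once one observes (Step~1) that every entry occurring below the head of a column is again a function of one of the types classified in the tables, up to the substitution $N\mapsto N/\gcd(N,m)$, $O_\ell\mapsto\{e\in O_\ell\colon e\mid N/\gcd(N,m)\}$, and up to conjugation by $\left[\begin{smallmatrix}1&\alpha\\0&1\end{smallmatrix}\right]$; hence Theorem~\ref{repl3AHaupt} applies to it verbatim and produces its replicates.

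The decisive step (Step~2) is a structural \emph{subcolumn} property of the tables: deleting the top entry of any column yields exactly the column headed by the function in its second row. Equivalently,
\[
 \big(g_j\big)^{[2^{i/2}]}=g_{i+j}\qquad(i,j\ge 0),
\]
so that the replicate data assigned by Theorem~\ref{repl3AHaupt} to $g_j$ is precisely the tail $g_{j+1},g_{j+2},\dots$ of the original column. I would verify this case by case over the column types of Tables~\ref{Table1} and~\ref{Table2}, using Lemma~\ref{decomp} and Remark~\ref{conj1/2} to identify the entries. This is the main obstacle, since it is where all the content resides: it must be checked separately for the uniform-level groups $2^kN+O_1+2^kO_2$, for the groups of the form $2^kN\,|\,h+\cdots$, and for the conjugated functions $G^{1/2},G^{1/4},G^{1/8}$; the table entries were in fact chosen (as noted at the end of the proof of Theorem~\ref{repl3AHaupt}) precisely so that this consistency holds. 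The verification is essentially bookkeeping confirming that the tables are internally stable under the replicate operation, but it is genuinely the heart of the corollary.

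Finally (Step~3) I would incorporate the odd multiples. Odd replication acts on the group labels by the uniform substitution above, applied simultaneously to every entry of a column (exactly as in the case analysis in the proof of Theorem~\ref{repl3AHaupt}), and it leaves the level $2^k$, the symbol $h$, and the conjugation index $\alpha$ unchanged; hence the column of $g_j^{(m)}$ is the entrywise $m$-replicate of the column of $g_j$, and Step~2 persists in the form $\big(g_j^{(m)}\big)^{[2^{i/2}]}=\big(g_{i+j}\big)^{(m)}$, where for the conjugated functions one also invokes the remark that $\big(f^\alpha\big)^{(n)}=\big(f^{(n)}\big)^\alpha$ and $\big(\big(f^\alpha\big)^{(m)}\big)^{(m')}=\big(f^\alpha\big)^{(mm')}$ for $m,m'$ odd. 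Writing a general $a=2^{j/2}m$ and $b=2^{i/2}m'$ with $m,m'$ odd, the definitions then give
\[
 \big(g^{[a]}\big)^{[b]}=\Big(\big(g_j^{(m)}\big)^{[2^{i/2}]}\Big)^{(m')}=\big(g_{i+j}^{(m)}\big)^{(m')}=g_{i+j}^{(mm')}=g^{[2^{(i+j)/2}mm']}=g^{[ab]},
\]
which is the required composition law. Together with Step~1 this establishes complete \tp-replicability of every $g$ in the tables.
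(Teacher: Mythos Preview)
Your proposal is correct and follows essentially the same approach as the paper: both proofs rest on the \emph{subcolumn property} (removing the first few entries of any column, or applying odd $(m)$-replication entrywise, again yields a full column of the tables), verified case by case, and then derive the composition law $\big(g^{[a]}\big)^{[b]}=g^{[ab]}$ by the chain of equalities you wrote out. Your presentation is somewhat more explicit than the paper's, but the structure and content are the same.
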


\begin{proof}We begin the proof showing that we have complete \tp-replicability for powers of $\sqrt{2}$.

By complete \tp-replicability for powers of $\sqrt{2}$ we mean that, for any \tp-replicate of $g$,~$g^{[2^\frac{m}{2}n]}$, if we choose $\big(g^{[2^{\frac{m}{2}}n]}\big)^{[\sqrt{2}]}=g^{[2^{\frac{m+1}{2}}n]}$ and apply the \tp-replication rules from the theorem/tables we will necessarily have
\begin{gather}\label{commutation}
\big(g^{[2^{\frac{m}{2}}n]}\big)^{[2^{\frac{m'}{2}}]}=g^{[2^{\frac{m+m'}{2}}n]}, \qquad \text{for all} \ m'\geq 2.
\end{gather}

For doing this, we just have to see that after removing the first few entries of any column or applying $(m)$-replication, with $m$ odd, to a full column, the list we obtain is still a full column in the tables. This is a case by case check and is what guarantees that any replicate $g^{[2^\frac{k}{2}n]}$, together with $g^{[2^\frac{k+1}{2}n]}$ as its $[\sqrt{2}]$-replicate, will have \tp-replicates as expected
\begin{gather*}
\big(g^{[2^{\frac{m}{2}}n]}\big)^{[2^{\frac{m'}{2}}n']}
=\big(\big(g^{[2^{\frac{m}{2}}n]}\big)^{[2^{\frac{m'}{2}}]}\big)^{(n')} \quad \text{(by definition}) \\[-1mm]
\hphantom{\big(g^{[2^{\frac{m}{2}}n]}\big)^{[2^{\frac{m'}{2}}n']}}{}
=\big(g^{[2^{\frac{m+m'}{2}}n]}\big)^{(n')} \quad \text{(from (\ref{commutation})})\\[-1mm]
\hphantom{\big(g^{[2^{\frac{m}{2}}n]}\big)^{[2^{\frac{m'}{2}}n']}}{}
=\big(\big(g^{[2^{\frac{m+m'}{2}}]}\big)^{(n)}\big)^{(n')} \quad \text{(by definition)}\\[-1mm]
\hphantom{\big(g^{[2^{\frac{m}{2}}n]}\big)^{[2^{\frac{m'}{2}}n']}}{}
=\big(g^{[2^{\frac{m+m'}{2}}]}\big)^{(nn')} \quad \text{(with $g^{[2^{\frac{m+m'}{2}}]}$ comp.\ repl.\ or not)}\\[-1mm]
\hphantom{\big(g^{[2^{\frac{m}{2}}n]}\big)^{[2^{\frac{m'}{2}}n']}}{}
=g^{[2^{\frac{m+m'}{2}}nn']} \quad \text{(by definition)}.\tag*{\qed}
\end{gather*}\renewcommand{\qed}{}
\end{proof}

\section[Complete $(2{+})$-replicability and generalized Mahler recurrence relations]{Complete $\boldsymbol{(2{+})}$-replicability\\ and generalized Mahler recurrence relations}\label{mahler}

In this Section we extend some of Martin's results from \cite{Martin} to \tp-replication. Namely, we adapt his proof to show that the coefficients of completely \tp-replicable functions satisfy recurrence relations very similar to those of completely replicable functions.

We consider $\mathcal{L}$ a field extension of $\ratn$ containing all roots of unity, the ring $K\!=\!\mathcal{L}\big[\ldots,x_m^{[n]},\ldots\big]$, $m\in\nat$, $n\in\nat\cup\sqrt{2}\nat$, the series
\begin{gather*}
 h^{[r]}(q)=\frac{1}{q}+ \sum_{m=1}^{\infty}x_m^{[r]}q^m
\end{gather*}
for $r\in \nat\cup\sqrt{2}\nat$, and the polynomials $P_{k,r}(t)$ defined inductively by $P_{1,r}(t)=t$ and $P_{k,r}(t)=tP_{k-1,r}(t)- \sum\limits_{s=1}^{k-2}x_s^{[r]}P_{k-s-1,r}(t)-kx_{k-1}^{[k]}$. These are the same recurrence relations that Faber polynomials satisfy.

We fix $r\in\nat\cup\sqrt{2}\nat$ and consider the set of equations indexed by $k\geq 1$
\begin{gather}\label{comprep}
 \sum_{\substack{ad=k \\ 0\leq b<d}} h^{[ra]}\big(e^{2\pi i \frac{b}{d}}q^{\frac{a}{d}}\big)+
 \sum_{\substack{ad=k \\ d \ \text{even}\\0\leq b<d}} h^{[ra\sqrt{2}]}\big(e^{2\pi i \frac{b}{d}}q^{\frac{2a}{d}}\big)=P_{k,r}\big(h^{[r]}(q)\big).
\end{gather}

These equations give an infinite set of identities in $K$ by equating the coefficients of equal powers of $q$ in both sides of the each equation.
We denote by $I^{[r]}$ the ideal in $K$ generated by them and write $I$ for the ideal in $K$ generated by $ \bigcup\limits_{r\in\nat\cup\sqrt{2}\nat}I^{[r]}$.

If $f(q)=\frac{1}{q}+\sum\limits_{k=1}^{\infty} a_kq^k$ is completely \tp-replicable with replicates $f^{[n]}(q)=\frac{1}{q}+ \sum\limits_{k=1}^{\infty} a_k^{[n]}q^k$, $n\in\nat\cup\sqrt{2}\nat$ then they satisfy equations (\ref{comprep}) with $h^{[n]}(q)$ and $h^{[\sqrt{2}n]}(q)$ replaced by $f^{[n]}(q)$ and $f^{[\sqrt{2}n]}(q)$, respectively. This means that every completely \tp-replicable function $f$ induces a~non-trivial homomorphism $E_f\colon K\longrightarrow \mathbb{C}$ with $E_f\big(x_k^{[n]}\big)=a_k^{[n]}$, whose kernel contains~$I$.

For $u\in\nat\cup\sqrt{2}\nat$ we define a $\mathcal{L}$-algebra endomorphism $\psi_u$ of $K$ letting it fix every element of $\mathcal{L}$ and mapping $x_m^{[n]}$ to $x_m^{[nu]}$. Since the equations defining $I^{[r]}$ and $I^{[ru]}$ have the same form, it is clear that $\psi_u(I^{[r]})=I^{[ru]}$ for any $r$. Consequently, $\psi_u(I)\subseteq I$ and we think of $\psi_u$ as an $\mathcal{L}$-algebra endomorphism of the quotient $K/I$.

For each $M\geq 1$ we set $R_M=K/I\big[\big[q^{\frac{1}{2M}}\big]\big]$ and also \begin{gather*}\Delta=\left\{\left[
\begin{matrix}
a & b \\
0 & d
\end{matrix}
\right]|\, a,b,d\in\nat\right\}\cup
\left\{\left[
\begin{matrix}
\sqrt{2}a & b/\sqrt{2} \\
0 & \sqrt{2}d
\end{matrix}
\right]|\, a,b,d\in\nat\right\}.\end{gather*}

We define an action of $\Delta$ in $R_M$ in the following way. For $\alpha=\left[\begin{smallmatrix}
u & v \\
0 & y \\
\end{smallmatrix}\right]\in \Delta$ we set $e\parallel \alpha= e$, $x_m^{[r]}\parallel\alpha=\psi_u\big(x_m^{[r]}\big)=x_m^{[ru]}$ and $q^{\frac{1}{M}}\parallel\alpha=e^{2\pi i \frac{v}{My}}q^{\frac{u}{My}}$. Then we extend this map to an $\mathcal{L}$-homomorphism from $K\big[\big[q^{\frac{1}{M}}\big]\big]$ to $K\big[\big[q^{\frac{1}{My}}\big]\big]$. Though the context should make it clear, we warn the reader that in this Chapter only the symbol $\parallel$ has this meaning.

\begin{Remark}The ideal $I$ of $K$ is stable under $\parallel\alpha$ for every $\alpha\in \Delta$.
\end{Remark}

\begin{Remark} For every $\alpha,\beta\in\Delta$ and $h(q)\in R_M$ we have \begin{gather*}(h(q)\parallel\alpha)\parallel \beta=h(q)\parallel \alpha\beta.\end{gather*}
\end{Remark}

\begin{Remark}
 If $\alpha=\left[
\begin{smallmatrix}
u & x \\
0 & y \\
\end{smallmatrix}
\right]\in\Delta$ then $h^{[r]}(q)\parallel\alpha=h^{[ru]}\big(e^{2\pi i \frac{x}{y}}q^{\frac{u}{y}}\big)$.
\end{Remark}

\begin{Remark}For every positive integer $j$, $\big(h^{[r]}(q)\big)^j\parallel\alpha=\big(h^{[r]}(q)\parallel\alpha\big)^j$ as $\parallel\alpha$ is a ring homo\-morphism.
\end{Remark}

\begin{Definition}
Let $n$ be a positive integer and $h(q)\in R_1$. We define ${\bf T}_n$ by
\begin{gather*}
 {\bf T}_n(h(q))= \sum_{\substack{uy=n\\0\leq v<y}} h(q)\parallel\left[\begin{matrix}
u & v \\
0 & y \\
\end{matrix}\right]+
\sum_{\substack{uy=\frac{n}{2}\\0\leq v<2y}} h(q)\parallel\left[\begin{matrix}
\sqrt{2}u & \dfrac{v}{\sqrt{2}} \\
 0 & \sqrt{2}y \\
\end{matrix}\right]
\end{gather*}
and call ${\bf T}_n$ a generalized Hecke operator. Both sums are over positive integer numbers which makes the second sum be zero if $n$ is odd.
\end{Definition}

With this definition equation (\ref{comprep}) becomes
\begin{gather}\label{comprep2}
{\bf T}_k\big(h^{[r]}(q)\big)=P_{k,r}\big(h^{[r]}(q)\big).
\end{gather}

\begin{Definition} Let $n\in\nat\cup\sqrt{2}\nat$ and $h(q)\in R_1$. We denote by $\Psi_n$ the mapping $\Psi_n h(q)=h(q)\parallel
\left[\begin{smallmatrix}
n & 0 \\
0 & n
\end{smallmatrix}\right]$.
\end{Definition}

\begin{Proposition} Let $l_1$ and $l_2$ be relatively prime positive integers and $h(q)\in R_1$. Then
\begin{gather*}{\bf T}_{l_1}{\bf T}_{l_2}h(q)={\bf T}_{l_1l_2}h(q).\end{gather*}
In particular, ${\bf T}_{l_1}$ and ${\bf T}_{l_2}$ commute.
\end{Proposition}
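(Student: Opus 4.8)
The plan is to reduce the identity to a purely combinatorial bijection between indexing matrices, exactly as in the classical proof that the Hecke operators for ${\rm PSL}(2,\intn)$ satisfy $T_mT_n=T_{mn}$ when $\gcd(m,n)=1$; the only new feature is the presence of two types of matrices in $\Delta$ (the integral ones and the $\sqrt2$-scaled ones). First I would use the composition law $(h\parallel\alpha)\parallel\beta=h\parallel\alpha\beta$ to expand
\[
{\bf T}_{l_1}{\bf T}_{l_2}h=\sum_{\alpha\in A_{l_1}}\sum_{\beta\in A_{l_2}}h\parallel(\beta\alpha),
\]
where $A_n$ denotes the set of matrices appearing in the definition of ${\bf T}_n$, split into the integral family $\left[\begin{smallmatrix}u&v\\0&y\end{smallmatrix}\right]$ (with $uy=n$, $0\le v<y$) and the scaled family $\left[\begin{smallmatrix}\sqrt2 u&v/\sqrt2\\0&\sqrt2 y\end{smallmatrix}\right]$ (with $uy=n/2$, $0\le v<2y$). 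Each $\beta\alpha$ is again upper triangular of determinant $l_1l_2$.

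The key simplification comes from coprimality: since $\gcd(l_1,l_2)=1$, at most one of $l_1,l_2$ is even, so at most one of $A_{l_1},A_{l_2}$ contains scaled matrices. Consequently a product $\beta\alpha$ never multiplies two scaled matrices (which would produce the collapse $\sqrt2\cdot\sqrt2=2$ and spoil the type bookkeeping — this is precisely the phenomenon behind ${\bf T}_2{\bf T}_2\neq{\bf T}_4$). Thus a product of two integral matrices stays integral, and a product of an integral and a scaled matrix is scaled; a one-line multiplication shows, for $\beta$ integral and $\alpha$ scaled, that $\beta\alpha=\left[\begin{smallmatrix}\sqrt2 u_1u_2&(u_2v_1+2v_2y_1)/\sqrt2\\0&\sqrt2 y_1y_2\end{smallmatrix}\right]$ with $u_1u_2\cdot y_1y_2=l_1l_2/2$, so it indeed lies in the scaled family for $l_1l_2$.

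The heart of the argument is then to show that $(\alpha,\beta)\mapsto\beta\alpha$, followed by normalizing the upper-right entry into its standard range, is a bijection from $A_{l_1}\times A_{l_2}$ onto $A_{l_1l_2}$. I would organize this by the induced factorization: writing $u=u_1u_2$, $y=y_1y_2$, coprimality forces $u_1,y_1$ to be the parts of $u,y$ dividing $l_1$ and $u_2,y_2$ the parts dividing $l_2$, so factorizations of $l_1l_2$ correspond bijectively (by the Chinese remainder theorem) to pairs of factorizations of $l_1$ and $l_2$. For a fixed factorization one must check that the off-diagonal entry $u_2v_1+v_2y_1$ runs over a complete residue system modulo $y=y_1y_2$ as $(v_1,v_2)$ range over $[0,y_1)\times[0,y_2)$; this uses $\gcd(u_2,y_1)=1$, again guaranteed by $\gcd(l_1,l_2)=1$. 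The same count handles the scaled case, with modulus $2y$ and the doubled range $0\le v_1<2y_1$. A term count, using multiplicativity of the divisor sum under $\gcd(l_1,l_2)=1$, confirms that the two sides have equally many summands of each type, so nothing is lost in the bijection.

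I expect the main obstacle to be the bookkeeping in the normalization step. Reducing the upper-right entry of $\beta\alpha$ into the standard range $0\le v<y$ (resp.\ $0\le v<2y$) must be realized by left multiplication by a translation $\left[\begin{smallmatrix}1&k\\0&1\end{smallmatrix}\right]$ that acts trivially on $R_1$, so that $h\parallel(\beta\alpha)$ is genuinely unchanged; one therefore has to verify that the admissible reductions (those fixing the action on $K/I\big[\big[q^{1/2}\big]\big]$) have exactly the period matching the ranges used in the definition of ${\bf T}_{l_1l_2}$. This is the one place where the $\Gp$-specific normalization, and the factor of $2$ distinguishing the two matrix types, must be checked rather than quoted from the classical case. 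Once the bijection is established the identity ${\bf T}_{l_1}{\bf T}_{l_2}h={\bf T}_{l_1l_2}h$ follows, and the final assertion that ${\bf T}_{l_1}$ and ${\bf T}_{l_2}$ commute is immediate, since $l_1l_2=l_2l_1$ forces ${\bf T}_{l_1}{\bf T}_{l_2}={\bf T}_{l_1l_2}={\bf T}_{l_2}{\bf T}_{l_1}$.
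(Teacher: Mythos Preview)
Your proposal is correct and follows essentially the same route as the paper: reduce ${\bf T}_{l_1}{\bf T}_{l_2}={\bf T}_{l_1l_2}$ to a bijection between the set of products of indexing matrices and the set of indexing matrices for the product, using that $\gcd(l_1,l_2)=1$ forces at most one factor to be even so that two scaled matrices are never multiplied. The paper states this bijection in a single line without elaboration; your write-up is considerably more detailed, and your flagged ``main obstacle''---that reducing the off-diagonal entry into its standard range must leave the action on $R_1$ unchanged---is exactly the point that makes the bijection well-defined on summands (it holds because for $h\in R_1$ the action of $\left[\begin{smallmatrix}u&v\\0&y\end{smallmatrix}\right]$ depends on $v$ only modulo $y$, and that of $\left[\begin{smallmatrix}\sqrt2 u&v/\sqrt2\\0&\sqrt2 y\end{smallmatrix}\right]$ only modulo $2y$).
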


\begin{proof}This comes from the fact that
\begin{gather*}
\left\{
\left[\begin{matrix}
u & v \\
0 & y \\
\end{matrix}\right]
\left[\begin{matrix}
u'& v'\\
0 & y'\\
\end{matrix}\right]
 |\, uy =l_1,\, u'y'=l_2,\, 0\leq v<y, \, 0\leq v'<y',\, (yy' \ \text{even})\right\}
\\
 \qquad = \left\{
\left[\begin{matrix}
u'' & v'' \\
0 & y'' \\
\end{matrix}\right]
|\,u''v''=l_1l_2,\, 0\leq v''<y'',\, (y'' \ \text{even})\right\}.\tag*{\qed}
\end{gather*}\renewcommand{\qed}{}
\end{proof}

\begin{Proposition}If $p$ is an odd prime then \begin{gather*}{\bf T}_{p^n}{\bf T}_{p}(h(q))={\bf T}_{p^{n+1}}(h(q))+p{\bf T}_{p^{n-1}}\Psi_{p}(h(q)).\end{gather*} If $p=2$ then
\begin{gather*}{\bf T}_{2^n}{\bf T}_{2}(h(q))={\bf T}_{2^{n+1}}(h(q))+2{\bf T}_{2^{n}}\Psi_{\sqrt{2}}(h(q))+2{\bf T}_{2^{n-1}}\Psi_{2}(h(q)).\end{gather*}
\end{Proposition}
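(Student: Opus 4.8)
The plan is to linearize both sides into $\mathcal{L}$-linear combinations of the elementary maps $h\mapsto h\parallel\gamma$ and then compare coefficients. Using the relation $(h\parallel\alpha)\parallel\beta=h\parallel\alpha\beta$, I would first write
\[
{\bf T}_{p^n}{\bf T}_p h=\sum_{\alpha}\sum_{\beta}h\parallel\alpha\beta,
\]
where $\alpha$ runs over the $p{+}1$ (for odd $p$) or $5$ (for $p=2$) matrices of ${\bf T}_p$ and $\beta$ over those of ${\bf T}_{p^n}$. Since $h^{[r]}\parallel\gamma=h^{[ru]}\big(e^{2\pi i x/y}q^{u/y}\big)$ for $\gamma=\left[\begin{smallmatrix}u&x\\0&y\end{smallmatrix}\right]$, the map $h\mapsto h\parallel\gamma$ is determined by $(u,y,x\bmod y)$ in the integer case and by $(u,y,x\bmod 2y)$ in the $\sqrt2$-case; two upper-triangular matrices act identically precisely when they agree after reducing the off-diagonal entry modulo the lower-right entry (resp.\ twice it). Thus the proposition is equivalent to an identity of multisets of reduced coset representatives, and it suffices to check that each reduced representative occurs with the same total multiplicity on both sides.

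For the odd prime case only integer matrices occur. Writing ${\bf T}_p$ via $\left[\begin{smallmatrix}p&0\\0&1\end{smallmatrix}\right]$ and $\left[\begin{smallmatrix}1&j\\0&p\end{smallmatrix}\right]$, $0\le j<p$, and ${\bf T}_{p^n}$ via $\left[\begin{smallmatrix}p^i&l\\0&p^{n-i}\end{smallmatrix}\right]$, I would carry out the classical $\mathrm{GL}_2$ coset computation. Left multiplication by $\left[\begin{smallmatrix}1&j\\0&p\end{smallmatrix}\right]$ needs no reduction and produces exactly one copy of each representative of ${\bf T}_{p^{n+1}}$ of $u$-exponent $\le n$; left multiplication by $\left[\begin{smallmatrix}p&0\\0&1\end{smallmatrix}\right]$ gives $\left[\begin{smallmatrix}p^{i+1}&pl\\0&p^{n-i}\end{smallmatrix}\right]$, and for $i<n$ reducing $pl$ modulo $p^{n-i}$ is $p$-to-one onto the multiples of $p$. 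Recognizing $\Psi_p$ as left multiplication by $\left[\begin{smallmatrix}p&0\\0&p\end{smallmatrix}\right]$, these are exactly the representatives of ${\bf T}_{p^{n-1}}\Psi_p$, each arising $p$ times, giving the coefficient $p$. A short check of the two extreme exponents (the $i=0$ reps and the top diagonal $\left[\begin{smallmatrix}p^{n+1}&0\\0&1\end{smallmatrix}\right]$, coming from $i=n$) completes that case.

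For $p=2$ the five matrices of ${\bf T}_2$ are $\left[\begin{smallmatrix}2&0\\0&1\end{smallmatrix}\right],\left[\begin{smallmatrix}1&0\\0&2\end{smallmatrix}\right],\left[\begin{smallmatrix}1&1\\0&2\end{smallmatrix}\right]$ together with the two $\sqrt2$-matrices $\sqrt2 I$ and $\sqrt2\left[\begin{smallmatrix}1&1/2\\0&1\end{smallmatrix}\right]$, and ${\bf T}_{2^n}$ likewise contains integer and $\sqrt2$ representatives. I would split the products by the types of the two factors, using $\sqrt2\cdot\sqrt2=2$: (integer)$\cdot$(integer) and $(\sqrt2)\cdot(\sqrt2)$ products are of integer type, the two mixed products of $\sqrt2$ type. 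The key identifications are that $\Psi_{\sqrt2}$ and $\Psi_2$ are left multiplication by $\sqrt2 I$ and $2I$, so the representatives of ${\bf T}_{2^n}\Psi_{\sqrt2}$ are the $\sqrt2\beta$ and those of ${\bf T}_{2^{n-1}}\Psi_2$ the $2\beta$. Tallying the reduced multiplicity of each $\left[\begin{smallmatrix}2^i&v\\0&2^{n+1-i}\end{smallmatrix}\right]$ and each $\sqrt2\left[\begin{smallmatrix}2^j&w/2\\0&2^{n-j}\end{smallmatrix}\right]$, I expect to find total multiplicities $1$, $3{+}2[v\text{ even}]$, $1$ for the integer type (according as the exponent is $0$, interior, or maximal) and $1{+}2[w\text{ even}]$, $1{+}4[w\text{ even}]$ for the $\sqrt2$ type in the interior, with modified boundary values at the extreme exponents, and to match these exactly against ${\bf T}_{2^{n+1}}+2{\bf T}_{2^n}\Psi_{\sqrt2}+2{\bf T}_{2^{n-1}}\Psi_2$.

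The hard part will be the $p=2$ multiplicity bookkeeping: the coefficients $2$ in the two correction terms must be seen to arise from two independent sources---the two-to-one collapse of $2l\bmod 2^{k}$ under reduction, and the pairing-up of the two translates ($\sqrt2 I$ with $\sqrt2\left[\begin{smallmatrix}1&1/2\\0&1\end{smallmatrix}\right]$, and $\left[\begin{smallmatrix}1&0\\0&2\end{smallmatrix}\right]$ with $\left[\begin{smallmatrix}1&1\\0&2\end{smallmatrix}\right]$). The parity of the off-diagonal entry (even versus odd $v,w$) together with the behaviour at the boundary exponents must be tracked carefully, so that the integer- and $\sqrt2$-type contributions coming from all four product classes land in precisely the right terms of the right-hand side.
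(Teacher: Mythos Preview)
Your plan is correct and is essentially the same approach as the paper's: both reduce the identity to a comparison of multisets of upper-triangular coset representatives obtained by multiplying the matrices defining ${\bf T}_p$ against those of ${\bf T}_{p^n}$, then matching against the right-hand side. The paper carries this out for $p=2$ by explicitly expanding the ten families of products and regrouping the sums, whereas you organize the same computation as a multiplicity count per reduced representative; your spot-checked interior multiplicity $3+2[v\text{ even}]$ indeed agrees with the contributions $1+2+2[v\text{ even}]$ from ${\bf T}_{2^{n+1}}$, $2{\bf T}_{2^n}\Psi_{\sqrt2}$, and $2{\bf T}_{2^{n-1}}\Psi_2$ respectively, so the bookkeeping you anticipate will go through.
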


\begin{proof}The case where $p$ is odd is true as the operators ${\bf T}_{p^n}$ agree with the Hecke operator for ${\rm PSL}_2(\intn)$.
For the case $p=2$ we see that ${\bf T}_{2^n}{\bf T}_2(h(q))$ equals
\begin{gather*}
h(q)\parallel
\left(
\sum_{\substack{i=0,1\\ 0\leq b<2^i}}\left[\begin{matrix}
2^{1-i} & b \\
0 & 2^i
\end{matrix}
\right]
+
\sum_{0\leq b<2}\left[\begin{matrix}
\sqrt{2} & \frac{b}{\sqrt{2}} \\
0 & \sqrt{2}
\end{matrix}
\right]
\right)
\left(
\sum_{\substack{i=0,\ldots,n\\ 0\leq b<2^i}}\left[\begin{matrix}
2^{n-i} & b \\
0 & 2^i
\end{matrix}
\right]
\right)
\\
\qquad{} +h(q)\parallel
\left(
\sum_{\substack{i=0,1\\ 0\leq b<2^i}}\left[\begin{matrix}
2^{1-i} & b \\
0 & 2^i
\end{matrix}
\right]+
\sum_{0\leq b<2}\left[\begin{matrix}
\sqrt{2} & \frac{b}{\sqrt{2}} \\
0 & \sqrt{2}
\end{matrix}
\right]
\right)
\left(
\sum_{\substack{i=0,\ldots,n-1\\ 0\leq b<2^{i+1}}}\left[\begin{matrix}
2^{n-1-i}\sqrt{2} & \frac{b}{\sqrt{2}} \\
0 & 2^i\sqrt{2} \\
\end{matrix}
\right]
\right)\\
{}=h(q)\parallel
 \!\left(
\sum_{\substack{i=0,\ldots,n\\0\leq b<2^i}}
\begin{bmatrix}
2^{n+1-i} & 2b \\
0 & 2^i
\end{bmatrix}
+
\sum_{\substack{i=0,\ldots,n\\0\leq b<2^i}}
\begin{bmatrix}
2^{n-i}\sqrt{2} & \sqrt{2}b \\
0 & 2^i\sqrt{2}
\end{bmatrix}
\right.
+
\sum_{{\substack{i=0,\ldots,n\\0\leq b<2^i}}}
\begin{bmatrix}
2^{n-i}\sqrt{2} & \sqrt{2}b+\frac{2^i}{\sqrt{2}}\\
0 & 2^i\sqrt{2}
\end{bmatrix}
\\
\qquad{} +
\sum_{\substack{i=0,\ldots,n\\0\leq b<2^{i}}}
\begin{bmatrix}
2^{n-i} & b \\
0 & 2^{i+1}
\end{bmatrix}
+
\sum_{\substack{i=0,\ldots,n\\0\leq b<2^{i}}}
\begin{bmatrix}
2^{n-i} & b+2^i \\
0 & 2^{i+1}
\end{bmatrix}
 +
\sum_{{{\substack{{i=0,\ldots,n-1} \\ {0\leq b<2^{i+1}} }}}}
\begin{bmatrix}
2^{n-i}\sqrt{2} & \sqrt{2}b \\
0 & 2^i\sqrt{2}
\end{bmatrix}
\\
\qquad{} +
\sum_{\substack{i=0,\ldots,n-1\\0\leq b<2^{i+1}}}
\begin{bmatrix}
2^{n-i} & b \\
0 & 2^{i+1}
\end{bmatrix}
+
\sum_{\substack{i=0,\ldots,n-1\\0\leq b<2^{i+1}}}
\begin{bmatrix}
2^{n-i} & b+2^i \\
0 & 2^{i+1}
\end{bmatrix}\\
\qquad{} +\sum_{{\substack{i=0,\ldots,n-1\\0\leq b<2^{i+1}}}}
\begin{bmatrix}
2^{n-1-i}\sqrt{2} & \frac{b}{\sqrt{2}} \\
0 & 2^{i+1}\sqrt{2}
\end{bmatrix}+
\left.
\sum_{\substack{i=0,\ldots,n-1\\0\leq b<2^{i+1}}}
\begin{bmatrix}
2^{n-1-i}\sqrt{2} & \frac{b}{\sqrt{2}}+2^ib \\
0 & 2^{i+1}\sqrt{2} \\
\end{bmatrix}
\right)
\\
 {} =h(q)\parallel
\left(
\sum_{\substack{i=0,\ldots,n\\0\leq b<2^i}}\left[\begin{matrix}
2^{n+1-i} & 2b \\
0 & 2^i
\end{matrix}
\right]+
\sum_{\substack{i=0,\ldots,n\\0\leq b<2^i}}\left[\begin{matrix}
2^{n-i}\sqrt{2} & \sqrt{2}b \\
0 & 2^i\sqrt{2}
\end{matrix}
\right]\right.\\
\qquad{} +
\sum_{\substack{i=0,\ldots,n\\0\leq b<2^i}}\left[\begin{matrix}
2^{n-i}\sqrt{2} & \sqrt{2}b+\frac{2^i}{\sqrt{2}}\\
0 & 2^i\sqrt{2}
\end{matrix}
\right]+
\sum_{\substack{i=0,\ldots,n\\0\leq b<2^{i+1}}}\left[\begin{matrix}
2^{n-i} & b \\
0 & 2^{i+1}
\end{matrix}
\right]\\
\qquad{}+
2 \sum_{\substack{i=0,\ldots,n-1\\0\leq b<2^{i}}}\left[\begin{matrix}
2^{n-i}\sqrt{2} & \sqrt{2}b \\
0 & 2^i\sqrt{2}
\end{matrix}
\right]+
2 \sum_{\substack{i=0,\ldots,n-1\\0\leq b<2^{i+1}}}\left[\begin{matrix}
2^{n-i} & b \\
0 & 2^{i+1}
\end{matrix}
\right]\\
\left.\qquad{} +
\sum_{\substack{i=0,\ldots,n-1\\0\leq b<2^{i+2}}}\left[\begin{matrix}
2^{n-1-i}\sqrt{2} & \frac{b}{\sqrt{2}} \\
0 & 2^{i+1}\sqrt{2}
\end{matrix}
\right]
\right).
\end{gather*}
Now, the first and fourth summands equal
\begin{gather*}
h(q)\parallel
\left(
 \sum_{\substack{i=0,\ldots,n+1\\0\leq b<2^i}}\left[\begin{matrix}
2^{n+1-i} & b \\
0 & 2^i
\end{matrix}
\right]+
2 \sum_{\substack{i=0,\ldots,n-1\\0\leq b<2^i}}\left[\begin{matrix}
2^{n-i} & 2b \\
0 & 2^i \\
\end{matrix}
\right]
\right),
\end{gather*}
the second, third and last summands equal
\begin{gather*}
h(q)\parallel
\left(
 \sum_{\substack{i=0,\ldots,n\\0\leq b<2^{i+1}}}\left[\begin{matrix}
2^{n-i}\sqrt{2} & \frac{b}{\sqrt{2}} \\
0 & 2^i\sqrt{2}
\end{matrix}
\right]+
2 \sum_{\substack{i=0,\ldots,n-2\\0\leq b<2^{i+1}}}\left[\begin{matrix}
2^{n-2-i} & \sqrt{2}b \\
0 & 2^{i+1}\sqrt{2}
\end{matrix}
\right]
\right),
\end{gather*}
and this shows that
\begin{gather*}
{\bf T}_{2^n}{\bf T}_{2}(h(q)) = h(q)\parallel
\left(
\sum_{\substack{i=0,\ldots,n+1\\0\leq b<2^{i}}}
\begin{bmatrix}
2^{n+1-i} & b \\
0 & 2^i
\end{bmatrix}
\right. +
\sum_{\substack{i=0,\ldots,n\\0\leq b<2^{i+1}}}
\begin{bmatrix}
2^{n-i}\sqrt{2} & \frac{b}{\sqrt{2}}\\
0 & 2^{i}\sqrt{2}
\end{bmatrix}
\\
\hphantom{{\bf T}_{2^n}{\bf T}_{2}(h(q))=}{}+
2 \sum_{\substack{i=0,\ldots,n-1\\0\leq b<2^{i+1}}}
\begin{bmatrix}
2^{n-i}\sqrt{2} & \sqrt{2}b \\
0 & 2^i\sqrt{2}
\end{bmatrix} + 2 \sum_{\substack{i=0,\ldots,n-1\\0\leq b<2^{i+1}}}
\begin{bmatrix}
2^{n-i} & b \\
0 & 2^{i+1}
\end{bmatrix}\\
 \left. \hphantom{{\bf T}_{2^n}{\bf T}_{2}(h(q))=}{}
 + 2 \sum_{\substack{i=0,\ldots,n-1\\0\leq b<2^{i}}}
\begin{bmatrix}
2^{n-i} & 2b \\
0 & 2^{i+1}
\end{bmatrix}
+2 \sum_{\substack{i=0,\ldots,n-2\\0\leq b<2^{i+1}}}
\begin{bmatrix}
2^{n-2-i} & \sqrt{2}b \\
0 & 2^{i+1}\sqrt{2}
\end{bmatrix}
\right)\\
\hphantom{{\bf T}_{2^n}{\bf T}_{2}(h(q))}{}
= {\bf T}_{2^{n+1}}(h(q))+2{\bf T}_{2^{n}}\Psi_{\sqrt{2}}(h(q))+2{\bf T}_{2^{n-1}}\Psi_{2}(h(q)),
\end{gather*}
and the theorem is proven.
\end{proof}

\begin{Corollary} The algebra generated by the operators ${\bf T}_{n}$, for $n\in\nat$, is commutative.
\end{Corollary}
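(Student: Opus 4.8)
The plan is to reduce the commutativity of the whole algebra to the commutativity of the operators attached to a single prime, and then to dispatch a fixed prime using the two recurrence relations just established. Since an algebra generated by a family of operators is commutative as soon as the generators commute pairwise, it suffices to prove that ${\bf T}_m$ and ${\bf T}_n$ commute for all positive integers $m$ and~$n$. The essential input will be the preceding Proposition (${\bf T}_{l_1}{\bf T}_{l_2}={\bf T}_{l_1l_2}$ for coprime $l_1,l_2$) together with the prime-power recurrences; the only fact I must check independently is that the scalings $\Psi_c$ behave centrally.

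First I would record that every scaling operator $\Psi_c$, $c\in\nat\cup\sqrt{2}\nat$, commutes with every ${\bf T}_n$. This is immediate from the relation $(h\parallel\alpha)\parallel\beta=h\parallel\alpha\beta$: writing ${\bf T}_n(h)=\sum_\alpha h\parallel\alpha$ and $\Psi_c(h)=h\parallel\left[\begin{smallmatrix} c & 0 \\ 0 & c\end{smallmatrix}\right]$, one finds $\Psi_c{\bf T}_n(h)=\sum_\alpha h\parallel\big(\alpha\left[\begin{smallmatrix} c & 0 \\ 0 & c\end{smallmatrix}\right]\big)$ and ${\bf T}_n\Psi_c(h)=\sum_\alpha h\parallel\big(\left[\begin{smallmatrix} c & 0 \\ 0 & c\end{smallmatrix}\right]\alpha\big)$, and these coincide because a scalar matrix is central. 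I would also note $\Psi_2=\Psi_{\sqrt{2}}^2$, so all the scalings appearing in the recurrences are powers of a single operator and hence commute with one another.

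Next, using ${\bf T}_{l_1}{\bf T}_{l_2}={\bf T}_{l_1l_2}$ for coprime arguments, I would factor ${\bf T}_m$ and ${\bf T}_n$ into their prime-power constituents. Operators attached to distinct primes commute by that same Proposition, so after rearranging factors the commutator $[{\bf T}_m,{\bf T}_n]$ reduces to the single question of whether ${\bf T}_{p^a}$ and ${\bf T}_{p^b}$ commute for a fixed prime $p$. Thus the problem is localized at each prime.

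Finally, for a fixed prime $p$ I would show by induction on $n$ that ${\bf T}_{p^n}$ lies in the commutative subalgebra generated by ${\bf T}_p$ together with the scalar $\Psi_p$ (for $p$ odd) or $\Psi_{\sqrt{2}}$ (for $p=2$). The base cases are ${\bf T}_{p^0}={\bf T}_1=\mathrm{id}$ and ${\bf T}_{p^1}={\bf T}_p$, and the inductive step comes from solving the recurrence for ${\bf T}_{p^{n+1}}$, namely ${\bf T}_{p^{n+1}}={\bf T}_{p^n}{\bf T}_p-p\,{\bf T}_{p^{n-1}}\Psi_p$ for odd $p$ and ${\bf T}_{2^{n+1}}={\bf T}_{2^n}{\bf T}_2-2\,{\bf T}_{2^n}\Psi_{\sqrt{2}}-2\,{\bf T}_{2^{n-1}}\Psi_2$ for $p=2$; since ${\bf T}_p$ and the relevant $\Psi$ all commute by the first step, the right-hand side again lies in this commutative subalgebra. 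Hence every ${\bf T}_{p^n}$ is a polynomial in the commuting operators ${\bf T}_p$ and $\Psi$, so any two of them commute, which is exactly what the prime localization required. The only point needing attention is the bookkeeping of the boundary term ${\bf T}_{p^{n-1}}$ at $n=1$ (where it equals ${\bf T}_1=\mathrm{id}$), but the recurrences as stated already cover this, so no genuine obstacle arises.
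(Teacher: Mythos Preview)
Your argument is correct and is exactly the standard deduction the paper leaves implicit by calling the statement a Corollary: reduce to prime powers via the multiplicativity Proposition, then at each prime use the recurrence to express ${\bf T}_{p^n}$ as a polynomial in the commuting operators ${\bf T}_p$ and $\Psi_p$ (resp.\ $\Psi_{\sqrt{2}}$). Your check that $\Psi_c$ commutes with every ${\bf T}_n$ via the centrality of scalar matrices under the $\parallel$-action is the one point the paper does not spell out, and your handling of it is fine.
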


\looseness=-1 Let $l$ be a fixed prime. We set $Q_k={\bf T}_l\big(h(q)^k\big)$ for $k\geq 1$, and for $b\in\frac{K}{I}$ we use $b^{[l]}$ to denote $b\parallel
\left[\begin{smallmatrix}
l & \ast \\
0 & \ast
\end{smallmatrix}\right]$.
We set also $Q_0=
 \begin{cases}
2l+1,& \text{if } l=2,\\
l+1,& \text{if } l\neq2
\end{cases}$ and define ${\bf T}_{\frac{k}{l}}$ as the operator that sends~$R_1$ to zero if~$l$ does not divide~$k$. We use the same notation to denote both $P_{k,r}(t)$ and its image in~$\frac{K}{I}[t]$.

\begin{Proposition}\label{rec}
For $k\in\nat$ write $P_{k,r}(t)=t^k+ \sum\limits_{i=1}^{k}b_{k,i}t^{k-i}\in\frac{K}{I}[t]$. If $l$ is odd then
\begin{gather*}Q_k+ \sum_{i=1}^kb_{k,i}Q_{k-i}+ \sum_{i=1}^k\big(b_{k,i}^{[l]}-b_{k,i}\big)h^{[l]}(q^l)^{k-i}=P_{kl}(h(q))+k{\bf T}_{\frac{k}{l}}\Psi_l(h(q)),\end{gather*}
 and if $l=2$ we have
\begin{gather*}
Q_k+ \!\sum_{i=1}^k \! b_{k,i}Q_{k-i}\! + \!\sum_{i=1}^k\!\big(b_{k,i}^{[\sqrt{2}]}\!-b_{k,i}\big)\big(h^{[\sqrt{2}]}(q)^{k-i}\!+h^{[\sqrt{2}]}(-q)^{k-i}\big)\!
 +\!\sum_{i=1}^k\!\big(b_{k,i}^{[2]}\!-b_{k,i}\big)h^{[2]}\big(q^2\big)^{k-i} \\
 \qquad{} = \begin{cases}
 P_{2k,1}(h(q)), & \mbox{if } 2\!\nmid\! k, \\
 P_{2k,1}(h(q))+2{\bf T}_{k}\Psi_{\sqrt{2}}(h(q))+2{\bf T}_{\frac{k}{2}}\Psi_{2}(h(q)), & \mbox{if } 2\,|\, k \\
 \end{cases}
\end{gather*}
\end{Proposition}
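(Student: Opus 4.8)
The plan is to evaluate the single quantity ${\bf T}_l\big(P_{k,r}(h(q))\big)$ in two different ways: one computation reproduces the left-hand side of the asserted identity and the other reproduces the right-hand side, so that equating them proves the Proposition. The bridge between the two is the defining relation of complete \tp-replicability, equation~(\ref{comprep2}), which states ${\bf T}_k\big(h^{[r]}\big)=P_{k,r}\big(h^{[r]}\big)$; hence $P_{k,r}(h)={\bf T}_k(h)$ as elements, and applying ${\bf T}_l$ gives ${\bf T}_l\big(P_{k,r}(h)\big)={\bf T}_l{\bf T}_k(h)$.

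For the right-hand side I would feed ${\bf T}_l{\bf T}_k(h)$ into the composition law established in the preceding Proposition. Writing $k=l^a m$ with $m$ coprime to $l$ and using the commutativity of the ${\bf T}_n$ together with the centrality of the scaling operators $\Psi_{\sqrt2}$, $\Psi_2$, $\Psi_l$, the product ${\bf T}_l{\bf T}_k$ collapses to ${\bf T}_{lk}$ plus the $\Psi$-correction terms. Since ${\bf T}_{lk}(h)=P_{kl}(h)$ again by~(\ref{comprep2}), this yields $P_{kl}(h)$ together with the stated correction operators: a single $\Psi_l$-term for odd $l$ (which vanishes unless $l\mid k$, by the convention defining ${\bf T}_{k/l}$), and the pair of terms built from $\Psi_{\sqrt2}$ and $\Psi_2$ when $l=2$, whose survival is dictated by the parity of $k$.

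For the left-hand side I would expand ${\bf T}_l\big(P_{k,r}(h)\big)$ directly. Writing $P_{k,r}(t)=\sum_{i=0}^k b_{k,i}t^{k-i}$ with $b_{k,0}=1$ and using additivity of ${\bf T}_l$, everything reduces to the single-term identity
\begin{gather*}
 {\bf T}_l\big(b_{k,i}h^{k-i}\big)=b_{k,i}Q_{k-i}+\big(b_{k,i}^{[l]}-b_{k,i}\big)\big(h^{[l]}(q^l)\big)^{k-i}
\end{gather*}
(for odd $l$), with the extra summand $\big(b_{k,i}^{[\sqrt2]}-b_{k,i}\big)\big(h^{[\sqrt2]}(q)^{k-i}+h^{[\sqrt2]}(-q)^{k-i}\big)$ present when $l=2$. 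This is where the structure of the generalized Hecke operator enters: each $\parallel\alpha$ is a ring homomorphism acting on the coefficient $b_{k,i}\in K/I$ by $\psi_u$, where $u$ is the upper-left entry of $\alpha$. The matrices occurring in ${\bf T}_l$ with $u=1$ fix $b_{k,i}$, and their $h$-contributions are exactly $Q_{k-i}={\bf T}_l(h^{k-i})$ minus the contributions of the remaining matrices; the matrix $\left[\begin{smallmatrix}l&0\\0&1\end{smallmatrix}\right]$ (and, for $l=2$, the two $\sqrt2$-scaled matrices $\left[\begin{smallmatrix}\sqrt2&0\\0&\sqrt2\end{smallmatrix}\right]$ and $\left[\begin{smallmatrix}\sqrt2&1/\sqrt2\\0&\sqrt2\end{smallmatrix}\right]$) twist $b_{k,i}$ to $b_{k,i}^{[l]}$ (resp.\ $b_{k,i}^{[\sqrt2]}$) and produce the factors $h^{[l]}(q^l)$ (resp.\ $h^{[\sqrt2]}(q)$ and $h^{[\sqrt2]}(-q)$, the sign coming from $e^{2\pi i/2}$). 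Summing over $i$, peeling off the $i=0$ term as $Q_k$, and noting the $i=k$ term is $b_{k,k}Q_0$ with $Q_0={\bf T}_l(1)=l+1$ (or $2l+1$ for $l=2$), reproduces the left-hand side verbatim.

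I expect the $l=2$ case to be the main obstacle. There the definition of ${\bf T}_2$ carries the extra $\sqrt2$-sum, so both the expansion side (the $h^{[\sqrt2]}(\pm q)$ terms and the $b^{[\sqrt2]}$ twists) and the composition side (the two operators $2{\bf T}_k\Psi_{\sqrt2}$ and $2{\bf T}_{k/2}\Psi_2$) acquire genuinely new pieces, and one must separately treat $2\mid k$ and $2\nmid k$. The delicate bookkeeping is to confirm that the half-integer powers of $q$ and the twisted versus untwisted coefficient pieces recombine correctly into the terms $Q_{k-i}$ and $h^{[\sqrt2]}(q)^{k-i}+h^{[\sqrt2]}(-q)^{k-i}$; the odd-$l$ case is the same argument with only the single matrix $\left[\begin{smallmatrix}l&0\\0&1\end{smallmatrix}\right]$ to isolate, and is comparatively routine.
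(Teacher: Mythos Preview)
Your proposal is correct and follows essentially the same route as the paper's proof: both compute ${\bf T}_l\big(P_{k,r}(h)\big)$ in two ways, one via ${\bf T}_l{\bf T}_k(h)$ together with the preceding composition law and commutativity to obtain $P_{kl}(h)$ plus the $\Psi$-corrections, and the other by expanding $P_{k,r}$ monomial by monomial and using that each $\parallel\alpha$ is a ring homomorphism twisting the coefficient $b_{k,i}$ by $\psi_u$. The paper defers the odd-$l$ case to Martin and writes out only $l=2$, but the argument you sketch for odd $l$ is the same one.
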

\begin{proof}
The case where $l$ is odd can be found in \cite{Martin}. When $l=2$ we apply ${\bf T}_2$ to both sides of equation (\ref{comprep2}) \begin{gather*}{\bf T}_2{\bf T}_k(h(q))={\bf T}_2\left(h(q)^k+ \sum_{i=1}^{k}b_{k,i}h(q)^{k-i}\right),\end{gather*}
and the following manipulation
\begin{gather*}
{\bf T}_2\big(b_{k,i}h(q)^{k-i}\big) =
b_{k,i}h(q)^{k-i}\parallel\left[\begin{matrix}
 2 & 0 \\
 0 & 1
 \end{matrix}\right]+
b_{k,i}h(q)^{k-i}\parallel\left[\begin{matrix}
 \sqrt{2} & 0 \\
 0 & \sqrt{2}
 \end{matrix}\right] \\
\hphantom{{\bf T}_2\big(b_{k,i}h(q)^{k-i}\big) =}{} +
b_{k,i}h(q)^{k-i}\parallel\left[\begin{matrix}
 \sqrt{2} & \frac{1}{\sqrt{2}} \\
 0 & \sqrt{2}
 \end{matrix}\right] +
b_{k,i}h(q)^{k-i}\parallel\left[\begin{matrix}
 1 & 0 \\
 0 & 2
 \end{matrix}\right] \\
\hphantom{{\bf T}_2\big(b_{k,i}h(q)^{k-i}\big) =}{}+
b_{k,i}h(q)^{k-i}\parallel\left[\begin{matrix}
 1 & 1 \\
 0 & 2
 \end{matrix}\right]\\
\hphantom{{\bf T}_2\big(b_{k,i}h(q)^{k-i}\big)}{} =
(b^{[2]}_{k,i}-b_{k,i})\left(h(q)^{k-i}\parallel\left[\begin{matrix}
 2 & 0 \\
 0 & 1
 \end{matrix}\right]\right) \\
\hphantom{{\bf T}_2\big(b_{k,i}h(q)^{k-i}\big) =}{}+
(b^{[\sqrt{2}]}_{k,i}-b_{k,i})\left(h(q)^{k-i}\parallel\left[\begin{matrix}
 \sqrt{2} & 0 \\
 0 & \sqrt{2}
 \end{matrix}\right]\right) \\
\hphantom{{\bf T}_2\big(b_{k,i}h(q)^{k-i}\big) =}{} +
(b^{[\sqrt{2}]}_{k,i}-b_{k,i})\left(h(q)^{k-i}\parallel\left[\begin{matrix}
 \sqrt{2} & \frac{1}{\sqrt{2}} \\
 0 & \sqrt{2}
 \end{matrix}\right]\right)+ b_{k,i}{\bf T}_2\big(h(q)^{k-i}\big)\\
\hphantom{{\bf T}_2\big(b_{k,i}h(q)^{k-i}\big) }{}
=\big(b^{[2]}_{k,i}-b_{k,i}\big)h^{[2]}\big(q^2\big)^{k-i}+\big(b^{[\sqrt{2}]}_{k,i}-b_{k,i}\big) \\
\hphantom{{\bf T}_2\big(b_{k,i}h(q)^{k-i}\big) =}{}
\times \big(h^{[\sqrt{2}]}(q)^{k-i}+h^{[\sqrt{2}]}(-q)^{k-i}\big)+b_{k,i}Q_{k-i}
\end{gather*}
shows that ${\bf T}_2{\bf T}_k(h(q))$ is equal to
\begin{gather*}
Q_k+ \sum_{i=1}^kb_{k,i}Q_{k-i}+ \sum_{i=1}^k\left(\big(b^{[2]}_{k,i}-b_{k,i}\big)h^{[2]}\big(q^2\big)^{k-i}+\big(b^{[\sqrt{2}]}_{k,i}-b_{k,i}\big)\right. \\
\left.\qquad{}\times\big(h^{[\sqrt{2}]}(q)^{k-i}+h^{[\sqrt{2}]}(-q)^{k-i}\big)\right).
\end{gather*}
If $(2,k)=1$ then ${\bf T}_2{\bf T}_k(h(q))={\bf T}_{2k}(h(q))=P_{2k,1}(h(q))$ and if $2^r$ is the exact power of $2$ that divides $k$ we have that
\begin{gather*}
{\bf T}_2{\bf T}_k(h(q)) ={\bf T}_{\frac{k}{2^r}}{\bf T}_{2^r}{\bf T}_2
 ={\bf T}_{\frac{k}{2^r}}\big({\bf T}_{2^{r+1}}(h(q))+2{\bf T}_{2^{r}}\Psi_{\sqrt{2}}(h(q))+2{\bf T}_{2^{r-1}}\Psi_{2}(h(q))\big)\\
\hphantom{{\bf T}_2{\bf T}_k(h(q))}{}
={\bf T}_{2k}(h(q))+2{\bf T}_{k}\Psi_{\sqrt{2}}(h(q))+2{\bf T}_{\frac{k}{2}}\Psi_{2}(h(q)).
\end{gather*}
This proves the assertion of the theorem.
\end{proof}

When $l=2$ the $Q_k$ are simply the power sum symmetric functions on $h^{[2]}\big(q^2\big)$, $h^{[\sqrt{2}]}(q)$, $h^{[\sqrt{2}]}(-q)$, $h(q)$, $h(-q)$. By induction, using the previous result, one shows that every $Q_j$ is a~polynomial in $h^{[2]}(q)$, $h^{[2]}\big(q^2\big)$, $h^{[\sqrt{2}]}(q)$, $h^{[\sqrt{2}]}(-q)$ and $h(q)$. Let
\begin{gather*}
\sigma_n= \sum_{1\leq i_1<i_2<\cdots<i_n\leq 5}x_{i_1}\cdots x_{i_n}
\end{gather*}
be the elementary symmetric functions in the indeterminates $x_1$, $x_2$, $x_3$, $x_4$, $x_5$. We know that the elementary symmetric function are polynomials in the power sum symmetric functions and from this we conclude that the elementary symmetric functions on $h^{[2]}\big(q^2\big)$, $h^{[\sqrt{2}]}(q)$, $h^{[\sqrt{2}]}(-q)$, $h(q)$, $h(-q)$ are polynomials in $h^{[2]}(q)$, $h^{[2]}\big(q^2\big)$, $h^{[\sqrt{2}]}(q)$, $h^{[\sqrt{2}]}(-q)$ and $h(q)$. We use these facts in the proof of the following proposition.

\begin{Proposition}If $f(q)=\frac{1}{q}+ \sum\limits_{k=1}^\infty a_kq^k$ is completely \tp-replicable with replicates $f^{[n]}(q)=\frac{1}{q}+ \sum\limits_{k=1}^\infty a_k^{[n]}q^k$, for $n\in\nat\cup\sqrt{2}\nat$, then
\begin{gather*}\begin{split}&
\sigma_2\left(f^{[2]}(2z),f^{[\sqrt{2}]}(z),f^{[\sqrt{2}]}\left(z+\frac{1}{2}\right),f\left(\frac{z}{2}\right),f\left(\frac{z+1}{2}\right)\right)\\
& \qquad{} =2a_2f(z)-f^{[2]}(z)+2\left(a_4-a_1\right)+2a_2^{[\sqrt{2}]}-\big(f^{[\sqrt{2}]}(z)\big)^2.
\end{split}
\end{gather*}
\end{Proposition}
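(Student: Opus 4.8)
The plan is to exploit the observation, made in the paragraph preceding the statement, that the generalized Hecke images $Q_k={\bf T}_2\big(f(q)^k\big)$ are exactly the power sums of the five series $f^{[2]}\big(q^2\big)$, $f^{[\sqrt{2}]}(q)$, $f^{[\sqrt{2}]}(-q)$, $f\big(q^{1/2}\big)$, $f\big(-q^{1/2}\big)$, which under $q=e^{2\pi i z}$ are precisely the five arguments $f^{[2]}(2z)$, $f^{[\sqrt{2}]}(z)$, $f^{[\sqrt{2}]}\big(z+\frac{1}{2}\big)$, $f\big(\frac{z}{2}\big)$, $f\big(\frac{z+1}{2}\big)$ occurring in $\sigma_2$. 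Since $\sigma_2$ is the second elementary symmetric function of these five series, the Newton identity $p_1^2=p_2+2\sigma_2$, together with $p_k=Q_k$, gives
\begin{gather*}
\sigma_2=\frac{1}{2}\big(Q_1^2-Q_2\big),
\end{gather*}
so it suffices to compute $Q_1$ and $Q_2$ and substitute. Everything below takes place after applying the homomorphism $E_f$ to $K/I$, so that all the replication relations \eqref{comprep2} for $f$ and its replicates are available.

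First I would record $Q_1={\bf T}_2(f)=P_{2,f}(f)=f^2-2a_1$, which is just the $n=2$ instance of the \tp-replication identity \eqref{def2A}. For $Q_2$ I would apply Proposition~\ref{rec} with $l=2$ and $k=2$. This needs three ingredients: the Faber polynomials, giving $P_{2,1}(t)=t^2-2a_1$ (hence $b_{2,1}=0$, $b_{2,2}=-2a_1$, with $b_{2,2}^{[\sqrt{2}]}=-2a_1^{[\sqrt{2}]}$ and $b_{2,2}^{[2]}=-2a_1^{[2]}$) and $P_{4,1}(f)=f^4-4a_1f^2-4a_2f+2a_1^2-4a_3$; the value ${\bf T}_2\Psi_{\sqrt{2}}(f)={\bf T}_2\big(f^{[\sqrt{2}]}\big)=P_{2,\sqrt{2}}\big(f^{[\sqrt{2}]}\big)=\big(f^{[\sqrt{2}]}\big)^2-2a_1^{[\sqrt{2}]}$, where the middle equality is the self-replication \eqref{comprep2} applied to the $\sqrt{2}$-replicate of $f$; and ${\bf T}_1\Psi_2(f)=f^{[2]}$, since ${\bf T}_1$ is the identity. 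Using $Q_0=5$ to evaluate $b_{2,2}Q_0$, the $k=2$ recurrence then solves for
\begin{gather*}
Q_2=f^4-4a_1f^2-4a_2f+2a_1^2-4a_3+2\big(f^{[\sqrt{2}]}\big)^2+2f^{[2]}+4a_1+2a_1^{[2]}.
\end{gather*}

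Substituting into $\sigma_2=\frac{1}{2}(Q_1^2-Q_2)$ with $Q_1^2=f^4-4a_1f^2+4a_1^2$, the $f^4$ and $-4a_1f^2$ terms cancel and I would obtain
\begin{gather*}
\sigma_2=a_1^2+2a_2f+2a_3-\big(f^{[\sqrt{2}]}\big)^2-f^{[2]}-2a_1-a_1^{[2]}.
\end{gather*}
This already has the shape claimed, except that the constant $a_1^2+2a_3-a_1^{[2]}$ must be turned into $2a_4+2a_2^{[\sqrt{2}]}$. The final step supplies this as a relation among coefficients: comparing the coefficients of $q^2$ on the two sides of the $n=2$ replication equation $Q_1=f^2-2a_1$ (equivalently \eqref{def2A}) gives $a_1^{[2]}+2a_2^{[\sqrt{2}]}+2a_4=a_1^2+2a_3$, that is, $a_1^2+2a_3-a_1^{[2]}=2a_4+2a_2^{[\sqrt{2}]}$. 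Replacing the constant yields $\sigma_2=2a_2f-f^{[2]}+2(a_4-a_1)+2a_2^{[\sqrt{2}]}-\big(f^{[\sqrt{2}]}\big)^2$, as required.

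The Faber-polynomial bookkeeping and the algebra of $Q_1^2-Q_2$ are routine; the two points requiring care are, first, correctly evaluating the twisted terms in Proposition~\ref{rec} — in particular recognizing that ${\bf T}_2\Psi_{\sqrt{2}}(f)$ collapses to $P_{2,\sqrt{2}}\big(f^{[\sqrt{2}]}\big)$ by complete \tp-replicability of $f$ — and, second, realizing that the leftover constant does not vanish on its own but is converted to the stated form only after invoking the $q^2$-coefficient relation extracted from the $n=2$ replication equation. Isolating that last relation is the main conceptual obstacle, since without it the answer appears to involve the coefficients $a_3$ and $a_1^{[2]}$ rather than $a_4$ and $a_2^{[\sqrt{2}]}$.
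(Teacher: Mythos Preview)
Your proof is correct and follows essentially the same route as the paper: compute $\sigma_2=\tfrac{1}{2}(Q_1^2-Q_2)$, evaluate $Q_1=P_{2,f}(f)$ directly and $Q_2$ via Proposition~\ref{rec} with $l=k=2$ (using $Q_0=5$ and ${\bf T}_2\Psi_{\sqrt{2}}(f)=P_{2,\sqrt{2}}(f^{[\sqrt{2}]})$, ${\bf T}_1\Psi_2(f)=f^{[2]}$), simplify, and then replace the constant $a_1^2+2a_3-a_1^{[2]}$ by $2a_4+2a_2^{[\sqrt{2}]}$ by reading off the $q^2$-coefficient of the $n=2$ replication identity. The only cosmetic difference is that the paper carries out the computation in $K/I$ and applies $E_f$ at the end, whereas you apply $E_f$ at the outset.
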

\begin{proof}
We take $h^{[n]}(q)$, for $n\in \nat\cup\sqrt{2}\nat$ as before and start by noticing that
\begin{gather*} \sigma_2\left(h^{[2]}(2z),h^{[\sqrt{2}]}(z),h^{[\sqrt{2}]}\left(z+\frac{1}{2}\right),h\left(\frac{z}{2}\right),
h\left(\frac{z+1}{2}\right)\right)=\frac{1}{2}\big(Q_1^2-Q_2\big).
\end{gather*}

Then we use Proposition \ref{rec} to express $Q_1$ and $Q_2$ as polynomials in $h^{[2]}(q)$, $h^{[2]}\big(q^2\big)$, $h^{[\sqrt{2}]}(q)$, $h^{[\sqrt{2}]}(-q)$ and $h(q)$.

Now, \begin{gather*}Q_1={\bf T}_1(h(q))=P_1(h(q))=h^2(q)-2x_1,\end{gather*} and from Proposition~\ref{rec}
\begin{gather*}
Q_2=P_4(h(q))+2P_2\big(h^{[\sqrt{2}]}(q)\big)\!+2h^{[2]}(q)-\big({-}10x_1+2\big({-}2x_1^{[\sqrt{2}]}\!+2x_1\big)+\big({-}2x_1^{[2]}\!+2x_1\big)\big),
\end{gather*}
because $b_{2,1}=b_{2,1}^{[\sqrt{2}]}=b_{2,1}^{[2]}=0$ and $b_{2,1}=-2x_1$, $b_{2,1}^{[\sqrt{2}]}=-2x_1^{[\sqrt{2}]}$, $b_{2,1}^{[2]}=-2x_1^{[2]}$, and $\frac{1}{2}\big(Q_1^2-Q_2\big)$ becomes
\begin{gather*}
\frac{1}{2}(h^4(q)-4x_1h^2(q)+4x_1^2-\big(h^4(q)-4x_1h^2(q)-4x_2h(q)-4x_3+2x_1^2 \\
\qquad\quad{}+ 2h^{[\sqrt{2}]}(q)^2-4x_1^{[\sqrt{2}]}+2h^{[2]}(q)+10x_1+4x_1^{[\sqrt{2}]}-4x_1+2x_1^{[2]}-2x_1\big)\\
\qquad{} =2x_2h(q)-\big(h^{[\sqrt{2}]}(q)\big)^2-h^{[2]}(q)+x_1^2+2x_3-2x_1-x_1^{[2]}.
\end{gather*}

Applying the homomorphism $E_f$ defined at the beginning of this section we get
\begin{gather*}
\sigma_2\left(f^{[2]}(2z),f^{[\sqrt{2}]}(z),f^{[\sqrt{2}]}\left(z+\frac{1}{2}\right),f\left(\frac{z}{2}\right),f\left(\frac{z+1}{2}\right)\right)\\
\qquad{} =2a_2f(q)-\big(f^{[\sqrt{2}]}(q)\big)^2-f^{[2]}(q)+a_1^2+2a_3-2a_1-a_1^{[2]}.
\end{gather*}

Equating the coefficient of $q^2$ in both sides of the equation
\begin{gather*}
f^{[2]}\big(q^2\big)+f^{[\sqrt{2}]}(q)+f^{[\sqrt{2}]}(-q)+f\big(q^\frac{1}{2}\big)+f\big({-}q^\frac{1}{2}\big)=P_2(f(q)),
\end{gather*}
we see that $a_1^2+2a_3-a_1^{[2]}=2a_4+2a_2^{[\sqrt{2}]}$ and this concludes the proof.
\end{proof}

\begin{Theorem}\label{MahlerRec}
If $f(q)=\frac{1}{q}+ \sum\limits_{k=1}^\infty a_kq^k$ is completely \tp-replicable with replicates $f^{[n]}(q)=\frac{1}{q}+ \sum\limits_{k=1}^\infty a_k^{[n]}q^k$, for $n\in\nat\cup\sqrt{2}\nat$, then their coefficients satisfy the following recurrence relation:
\begin{alignat*}{3}
& 1) \quad && a_{4k}=a_{2k+1}+ \sum_{j=1}^{k-1}a_ja_{2k-j}+\frac{1}{2}\big(a_k^2-a_k^{[2]}\big)-a_{2k}^{[\sqrt{2}]},&\\
& 2) \quad && a_{4k+1}= a_{2k+3} -a_2a_{2k} + \sum_{j=1}^{k}a_ja_{2k+2-j} +\sum_{j=1}^{k-1}a_j^{[2]}a_{2k-2j}^{[\sqrt{2}]} +2\sum_{j=1}^{k-1}a_{4j}a_{2k-2j}^{[\sqrt{2}]}&\\
&&& \hphantom{a_{4k+1}=}{} + \sum_{j=1}^{k-1}a_{4j}a_{k-j}^{[2]} +\sum_{j=1}^{2k-1}(-1)^ja_ja_{4k-j} +\sum_{j=1}^{k-1}a_{2j}^{[\sqrt{2}]}a_{2k-2j}^{[\sqrt{2}]}& \\
&&& \hphantom{a_{4k+1}=}{} +\frac{1}{2}\big(a_{k+1}^2-a_{k+1}^{[2]}+a_{2k}^2+a_{2k}^{[2]}\big), &\\
& 3) \quad && a_{4k+2}= \sum_{j=1}^{k}a_ja_{2k+1-j}+a_{2k+2},&\\
& 4) \quad && a_{4k+3}=a_{2k+4}-a_2a_{2k+1}-\frac{1}{2}\big(a_{2k+1}^2-a_{2k+1}^{[2]}\big)+ \sum_{j=1}^{2k}(-1)^ja_ja_{4k+2-j}+a_{2k+2}^{[\sqrt{2}]}&\\
&&& \hphantom{a_{4k+3}=}{} + \sum_{j=1}^{k}a_{4j-2}a_{k+1-j}^{[2]}+\sum_{j=1}^{k+1}a_ja_{2k+3-j}+2\sum_{j=1}^{2k}a_{2j}a_{2k+1-j}^{[\sqrt{2}]}
+\sum_{j=1}^{k}a_j^{[\sqrt{2}]}a_{2k+1-j}^{[\sqrt{2}]}.&
\end{alignat*}
\end{Theorem}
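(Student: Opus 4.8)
The plan is to adapt Martin's derivation of the Mahler recurrences, organising the four congruence classes by the parity of the coefficient index and using only two inputs already in hand: the $n=2$ generalized Hecke identity ${\bf T}_2\big(h^{[r]}\big)=P_{2,r}\big(h^{[r]}\big)$ from~(\ref{comprep2}), and the $\sigma_2$ formula of the previous proposition. It is convenient to record the $q$-expansions of the five functions whose power sums are the $Q_k$, together with the cancellations in their symmetric combinations:
\[
f^{[2]}\big(q^2\big)=q^{-2}+\sum_{k\ge1}a_k^{[2]}q^{2k},\qquad f\big(q^{1/2}\big)+f\big({-}q^{1/2}\big)=2\sum_{m\ge1}a_{2m}q^{m},
\]
\[
f^{[\sqrt{2}]}(q)+f^{[\sqrt{2}]}({-}q)=2\sum_{m\ge1}a_{2m}^{[\sqrt{2}]}q^{2m},\qquad f\big(q^{1/2}\big)f\big({-}q^{1/2}\big)=-2\sum_{m\ge0}a_{2m+1}q^{m}+\cdots,
\]
the decisive feature being that the odd-indexed $a_j$ survive only through the last product.

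First I would read off relations~1 and~3 directly from $\sum_i f_i=P_{2}(f)=f(q)^2-2a_1^{[2]}$. Equating coefficients of $q^{2k}$ gives $2a_{4k}+2a_{2k}^{[\sqrt{2}]}+a_k^{[2]}=2a_{2k+1}+\sum_{i=1}^{2k-1}a_ia_{2k-i}$; folding the convolution about its central term $a_k^2$ and solving for $a_{4k}$ produces relation~1. Equating coefficients of $q^{2k+1}$ gives $2a_{4k+2}=2a_{2k+2}+\sum_{i=1}^{2k}a_ia_{2k+1-i}$, and halving the (midpoint-free) convolution is relation~3.

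Relations~2 and~4 require the $\sigma_2$ identity. Its left-hand side is the sum of the ten pairwise products of the five functions, so by the cancellations above the coefficient of $q^{2k}$ (resp.\ $q^{2k+1}$) is a single linear equation whose only high-index unknowns are $a_{4k+1}$ and $a_{4k+4}$ (resp.\ $a_{4k+3}$ and $a_{4k+6}$): the odd one enters through $f\big(q^{1/2}\big)f\big({-}q^{1/2}\big)$, which also supplies the alternating convolution $\sum_{i+j=4k}(-1)^{j}a_ia_j$, and the even one through $f^{[2]}\big(q^2\big)\big(f(q^{1/2})+f({-}q^{1/2})\big)$. I would then eliminate the even-index unknown by substituting relation~1 with $k$ replaced by $k+1$ (resp.\ relation~3), whereupon the imported block $a_{2k+3}+\sum_j a_ja_{2k+2-j}+\frac{1}{2}\big(a_{k+1}^2-a_{k+1}^{[2]}\big)$ and its analogue appear, while the products $f^{[2]}(q^2)f^{[\sqrt{2}]}(\pm q)$, $f^{[2]}(q^2)f(\pm q^{1/2})$, $f^{[\sqrt{2}]}(q)f^{[\sqrt{2}]}({-}q)$ and the $f(\pm q^{1/2})$ cross terms supply the mixed convolutions $\sum a_j^{[2]}a_{2k-2j}^{[\sqrt{2}]}$, $2\sum a_{4j}a_{2k-2j}^{[\sqrt{2}]}$, $\sum a_{4j}a_{k-j}^{[2]}$ and $\sum a_{2j}^{[\sqrt{2}]}a_{2k-2j}^{[\sqrt{2}]}$. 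Solving for $a_{4k+1}$ (resp.\ $a_{4k+3}$) gives relations~2 and~4.

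\textbf{Main obstacle.} The one genuine difficulty is the bookkeeping in the odd-index cases. One must expand all ten products, track every pole-boundary contribution and every alternating sign produced by the $\pm q$ and $\pm q^{1/2}$ pairs, reindex each convolution correctly after folding, and confirm that the terms imported from relations~1 and~3 assemble into precisely the stated right-hand sides. The parity split in Proposition~\ref{rec}---the extra $2{\bf T}_k\Psi_{\sqrt{2}}+2{\bf T}_{k/2}\Psi_2$ present only for even $k$, which feeds into $Q_2$ and hence into the $\sigma_2$ formula---is exactly what makes relations~2 and~4 longer than~1 and~3, and checking that these corrections land in the intended convolutions is where the calculation must be done with care.
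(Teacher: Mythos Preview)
Your approach is essentially the paper's. The paper's own proof is extremely terse---it simply states that the four recurrences are ``a consequence of'' the $\sigma_1$ identity $\sum_i f_i=P_{2,f}(f)$ and the $\sigma_2$ identity established in the preceding proposition, leaving the coefficient extraction entirely to the reader. You have correctly filled in that extraction: relations~1 and~3 from matching coefficients of $q^{2k}$ and $q^{2k+1}$ in $\sigma_1$, and relations~2 and~4 from $\sigma_2$ together with back-substitution of the already-obtained even-index recurrences to eliminate $a_{4k+4}$ (resp.\ $a_{4k+6}$). Your identification of which pairwise products supply which convolutions, and of the alternating sum arising from $f(q^{1/2})f(-q^{1/2})$, is accurate.

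One small slip: you wrote $P_2(f)=f(q)^2-2a_1^{[2]}$, but the second Faber polynomial of $f$ is $f^2-2a_1$ (the constant comes from the coefficient of $f$ itself, not of $f^{[2]}$). Since this is a constant term it does not affect the coefficient matching at $q^{2k}$ or $q^{2k+1}$ for $k\ge1$, so the derivations of relations~1 and~3 are unaffected; but it is worth correcting.
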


\begin{proof}
This is a consequence of the following two identities
\begin{gather*}
\sigma_1\left(f^{[2]}(2z),f^{[\sqrt{2}]}(z),f^{[\sqrt{2}]}\left(z+\frac{1}{2}\right),f\left(\frac{z}{2}\right),
f\left(\frac{z+1}{2}\right)\right)=P_{2,f}(f(z)),\\
\sigma_2\left(f^{[2]}(2z),f^{[\sqrt{2}]}(z),f^{[\sqrt{2}]}\left(z+\frac{1}{2}\right),f\left(\frac{z}{2}\right),
f\left(\frac{z+1}{2}\right)\right)\\
\qquad {} =2a_2f(z)-f^{[2]}(z)+2\left(a_4-a_1\right)+2a_2^{[\sqrt{2}]}-\big(f^{[\sqrt{2}]}(z)\big)^2.\tag*{\qed}
\end{gather*}\renewcommand{\qed}{}
\end{proof}

\section[The baby monster Lie algebra and $(2{+})$-replication]{The baby monster Lie algebra and $\boldsymbol{(2{+})}$-replication}\label{BMandrep}

The moonshine module $V^\natural$ was constructed in \cite{FLM2} by Frenkel, Lepowsky and Meurman. This is a vertex operator algebra that has $\mathbb{M}$, the monster group, as symmetry group A vertex operator algebra is an intricate algebraic structure and we refer to \cite{LepLi} for the definition and the basics of its theory. The moonshine module has a grading $V^\natural= \bigoplus_{n\geq -1}V^\natural_{(n)}$ and its graded dimension $ \sum\limits_{n\geq -1}\big(\dim V^\natural_{(n)}\big)q^n$ is the $J$-function. Borcherds (\cite{B}) uses this vertex operator algebra to prove the moonshine conjectures in the following way. First, he shows that the McKay--Thompson series for $V^\natural$, i.e., $T_g= \sum\limits_{n\geq -1}\operatorname{Tr}\big(g|V^\natural_{(n)}\big)q^n$ are completely replicable functions. To do this, he uses~$V^\natural$ to build a generalized Kac--Moody algebra, the monster Lie algebra, whose twisted denominator identity is essentially the statement that the $n$-th replicate of a~$T_g$ is~$T_{g^n}$. Knowing that Hauptmoduls for genus-zero congruence groups are also completely replicable functions and that completely replicable functions satisfy some recurrence relations that determine a function from the first 5 coefficients of the function and its replicates, he was able to show that every McKay--Thompson series is indeed a Hauptmodul for some genus-zero congruence groups by just comparing the first few coefficients of the functions involved.

In \cite{H}, H\"ohn shows that there is a vertex algebra $W$ where $2\cdot\mathbb{B}$ acts as a symmetry group. This group is a central extension of $\mathbb{B}$, the baby monster group, and it arises as the centralizer of an element of class~$2A$ in~$\mathbb{M}$. This vertex operator algebra plays for $2\cdot\mathbb{B}$ the role that $V^\natural$ plays for the $\mathbb{M}$ and it was used to prove the generalized moonshine conjectures for the case of the baby monster, i.e., when $g$ in $T_{g,h}$ (see~\cite{N} for a precise statement of the generalized moonshine conjectures and what $T_{g,h}$ is) is an involution of type $2A$ in $\mathbb{M}$. In this section, we use $t$ to represent a (fixed) element in class~$2A$ in~$\mathbb{M}$. More precisely, what H\"ohn states in \cite{H} is the following.
If \begin{gather*}V^\natural(t)= \bigoplus_{n\geq -1}V^\natural_{(\frac{n}{2})}(t)\end{gather*} is the $t$-twisted module and $h$ is an element in the centralizer of $t$ in $\mathbb{M}$ then the McKay--Thompson series
\begin{gather*}T_{t,g}= \sum_{n\geq -1} \operatorname{Tr}\big(g|V^\natural_{\left(\frac{n}{2}\right)}(t)\big)q^n\end{gather*} is the Hauptmodul for some genus zero congruence subgroup.

We give a very brief sketch of the results from~\cite{H}. From the decompositions of $V^\natural=V^{00}\bigoplus V^{01}$ and $V^\natural(t)=V^{10}\bigoplus V^{11}$ of $+1$ and $-1$ eigenspaces for $t$, H\"ohn builds the vertex algebra $W$ mentioned above on which $2\cdot\mathbb{B}$ acts. Using this vertex algebra $W$ a Lie algebra $g_\mathbb{B}^\natural$, the baby monster Lie algebra, is constructed too. This is a $\frac{1}{2}\mathbb{Z}\times\frac{1}{2}\mathbb{Z}$-graded Lie algebra that has an action of $2\cdot\mathbb{B}$ in it that respects the grading and its $\big(\frac{m}{2},\frac{n}{2}\big)$ piece is isomorphic to $V^{[m,n]}_{(2mn)}$ ($[\cdot,\cdot]$ represents reduction ${\rm mod}\,2$). Also, $V^{10}$ is isomorphic to~$V^{01}$ as $2\cdot\mathbb{B}$-modules. H\"ohn shows that~$W$ is a generalized Kac--Moody algebra with twisted denominator formula
\begin{gather}
\sum_{m \in \mathbb{Z}} \operatorname{Tr}\big(g|V^{[m,1]}_{\left(\frac{m}{2}\right)}\big)p^\frac{m}{2}-\sum_{n \in \mathbb{Z}} \operatorname{Tr}\big(g|V^{[1,n]}_{\left(\frac{n}{2}\right)}\big)q^\frac{n}{2}\nonumber\\
\qquad{} =p^{-\frac{1}{2}} \exp\left(- \sum_{i>0}\sum_{\substack{m \in \mathbb{Z}^{+}\\n \in \mathbb{Z}}} \operatorname{Tr} \big(g^i|V^{[m,n]}_{\left(\frac{mn}{2}\right)}\big)\frac{p^\frac{im}{2}q^\frac{in}{2}}{i} \right).\label{twstd}
\end{gather}

We can now state our main result.

\begin{Theorem}\label{maintheorem}
The McKay--Thompson series $T_{t,g}(z)$, for $g\in2\cdot\mathbb{B}$, are completely \tp-replicable with replicates $T_{t,g}^{[n]}=T_{t,g^n}$ and $T_{t,g}^{[\sqrt{2}n]}=T_{1,g^{n}t}$.
\end{Theorem}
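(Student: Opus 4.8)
The plan is to read the two basic \tp-replication identities off the twisted denominator formula (\ref{twstd}) and then to upgrade to complete \tp-replicability using the power-map structure of $2\cdot\mathbb{B}$ together with the facts that $t$ is central and $t^2=1$. The first step is to translate the graded traces in (\ref{twstd}) into McKay--Thompson series. Since $t$ acts as $+1$ on each $V^{\ast 0}$ and as $-1$ on each $V^{\ast 1}$, for any $h$ commuting with $t$ we have $\operatorname{Tr}(h|V^{00}_{\cdot})+\operatorname{Tr}(h|V^{01}_{\cdot})=\operatorname{Tr}(h|V^\natural_{\cdot})$ and $\operatorname{Tr}(h|V^{00}_{\cdot})-\operatorname{Tr}(h|V^{01}_{\cdot})=\operatorname{Tr}(ht|V^\natural_{\cdot})$, with the analogous statements on the twisted module $V^\natural(t)=V^{10}\oplus V^{11}$. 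Thus the twisted sector (first index odd) assembles into the twisted series $T_{t,\cdot}$, while the untwisted eigenspaces read with a $t$-insertion assemble into the ordinary series $T_{1,\cdot t}$; in particular the $q$-series on the left of (\ref{twstd}) is $T_{t,g}$, and the reduction $[m,n]\equiv (m,n)\pmod 2$ sorts the exponent on the right according to the parity of the grading, the half-integer grading being the source of the eventual ``$\sqrt2$'' terms.

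Next I would run the standard Borcherds-type manipulation: take the logarithm of the exponential side of (\ref{twstd}) and compare the two sides coefficient by coefficient in $p$. The $\frac{1}{i}$-weighted double sum in the exponent produces, for each $n$, a divisor sum over $ad=n$ just as in the classical monstrous case, and the parity dictionary above sorts these contributions into the two sums of (\ref{def2A}): the unrestricted divisor sum carries the replicate $T_{t,g^a}$, coming from the twisted sector, while the sum restricted to even $d$ carries $T_{1,g^a t}$, coming from the untwisted sector read with a $t$-insertion at half-integer grading. This yields (\ref{def2A}) for $f=T_{t,g}$ with $T_{t,g}^{[a]}=T_{t,g^a}$ and $T_{t,g}^{[\sqrt2 a]}=T_{1,g^a t}$, so that $T_{t,g}$ is \tp-replicable. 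I expect this coefficient bookkeeping---matching the half-integer grading precisely to the even-$d$ sum so that exactly the claimed $\sqrt2$-replicates appear, while the unrestricted sum reproduces the $n$-th Faber polynomial---to be the main obstacle.

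To obtain complete \tp-replicability I would first apply the identity of the previous step with $g^n$ in place of $g$. This gives $(T_{t,g^n})^{[m]}=T_{t,g^{nm}}$ and $(T_{t,g^n})^{[\sqrt2 m]}=T_{1,g^{nm}t}$, which is exactly the composition law $(f^{[n]})^{[m]}=f^{[nm]}$ for the integer-indexed replicates $f^{[n]}=T_{t,g^n}$.

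It then remains to show that each $\sqrt2$-replicate $T_{1,g^n t}$ is itself \tp-replicable with $(T_{1,g^n t})^{[m]}=T_{1,g^{nm}t}$ and $(T_{1,g^n t})^{[\sqrt2 m]}=T_{t,g^{2nm}}$. As $T_{1,g^n t}$ is an ordinary completely replicable monstrous series, I would invoke Remark~\ref{equivrep2Arep}, for which it suffices to check that for every even $N$ the ordinary replicate $(T_{1,g^n t})^{(N)}=T_{1,(g^n t)^N}=T_{1,g^{nN}}$ (using $[g,t]=1$ and $t^N=1$) decomposes as $T_{1,g^{nN}}(z)=T_{1,g^{nN}t}(z)+T_{t,g^{nN}}(z/2)+T_{t,g^{nN}}((z+1)/2)$; choosing the integer replicate to be $T_{1,g^{nN}t}$ and the $\sqrt2$-replicate to be $T_{t,g^{nN}}$ then gives the required values and realizes the closure $(f^{[\sqrt2]})^{[\sqrt2]}=f^{[2]}$. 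This decomposition is an orbifold trace relation of the same $p=2$ Hecke type as Lemma~\ref{decomp}, expressing the full untwisted trace of $g^{nN}$ through its $t$-twisted trace and the two half-period values of the twisted-module trace, and it follows from the $\mathbb{Z}/2$-orbifold structure underlying $W$ (equivalently, from a further reading of (\ref{twstd})). With these choices every replicate lies in the family $\{T_{t,g^k},\,T_{1,g^k t}\}$ and composition of replication steps matches multiplication of the indexing group elements, so $T_{t,g}$ is completely \tp-replicable.
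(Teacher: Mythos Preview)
Your overall plan is sound, and the first two stages---reading the \tp-replication identity for $T_{t,g}$ off the exponent of~(\ref{twstd}) and then substituting $g\mapsto g^n$ to handle the integer-indexed replicates---are exactly what the paper does. One point to sharpen: your ``parity dictionary'' is correct as far as it goes, but the decisive step in reorganising $Z$ is the $2\cdot\mathbb{B}$-module isomorphism $V^{01}\cong V^{10}$ stated just before~(\ref{twstd}); the paper uses it explicitly to convert the even--even summand into an untwisted trace with a $t$-insertion, and that is what produces the second sum in~(\ref{def2A}) with replicate $T_{1,g^at}$. You should name this isomorphism rather than leave it implicit in the bookkeeping.

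Your treatment of the $\sqrt{2}$-replicates genuinely differs from the paper's. The paper verifies the \tp-replication identity for $T_{1,g^nt}$ directly: it equates the candidate \tp-sum with the known monstrous sum $\sum T_{1,(g^nt)^a}\bigl(\frac{a\tau+b}{d}\bigr)$ and then matches $q$-coefficients through a case split on the parities of the exponent $k$ and of $n$, again invoking $V^{01}\cong V^{10}$; this occupies most of the proof. Your route via Remark~\ref{equivrep2Arep} is more economical, reducing everything to the single identity
\[
T_{1,h}(z)=T_{1,ht}(z)+T_{t,h}\!\left(\tfrac{z}{2}\right)+T_{t,h}\!\left(\tfrac{z+1}{2}\right)
\]
for $h$ commuting with $t$. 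You should, however, actually prove this rather than gesture at ``the $\mathbb{Z}/2$-orbifold structure'': comparing $q^k$-coefficients, the two half-period terms sum to $2\operatorname{Tr}\bigl(h\,|\,V^{10}_{(k)}\bigr)$, since only the integer-graded (hence $t=+1$) part of $V^\natural(t)$ survives, while $\operatorname{Tr}\bigl(h\,|\,V^\natural_{(k)}\bigr)-\operatorname{Tr}\bigl(ht\,|\,V^\natural_{(k)}\bigr)=2\operatorname{Tr}\bigl(h\,|\,V^{01}_{(k)}\bigr)$. Equality is then immediate from $V^{01}\cong V^{10}$. With that line supplied, your argument is complete and shorter than the paper's case analysis.
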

\begin{proof}

From the twisted denominator identity for the baby monster Lie algebra (\ref{twstd}) we have, after substituting $p^\frac{1}{2}$ for $p$ and $q^\frac{1}{2}$ for $q$,
\begin{gather*}
\sum_{m \in \mathbb{Z}} \operatorname{Tr}\big(g|V^{[m,1]}_{\left(\frac{m}{2}\right)}\big)p^m-\sum_{n \in \mathbb{Z}} \operatorname{Tr}\big(g|V^{[1,n]}_{\left(\frac{n}{2}\right)}\big)q^n =p^{-1} \exp\left(- \sum_{i>0}\sum_{\substack{m \in \mathbb{Z}^{+}\\n \in \mathbb{Z}}} \operatorname{Tr} \big(g^i|V^{[m,n]}_{\left(\frac{mn}{2}\right)}\big)\frac{p^{im}q^{in}}{i} \right).
\end{gather*}

The right side of this equation is $p^{-1}\exp(Z)$, where
\begin{gather*}
Z =- \sum_{i>0}\sum_{\substack{m \in \mathbb{Z}^{+}\\n \in \mathbb{Z}}} \operatorname{Tr} \big(g^i|V^{[m,n]}_{\left(\frac{mn}{2}\right)}\big)\frac{p^{im}q^{in}}{i}.
\end{gather*}

Because of the isomorphism between $V^{01}$ and $V^{10}$,
\begin{gather*}Z=- \sum_{i>0}
\sum_{\substack{m \in \mathbb{Z}^{+}\\n \in \mathbb{Z}\\ m \ \text{or} \ n \ \text{odd}}} \operatorname{Tr} \big(g^i|V^{[1,mn]}_{\left(\frac{mn}{2}\right)}\big)\frac{p^{im}q^{in}}{i}
- \sum_{i>0}\sum_{\substack{m \in \mathbb{Z}^{+}\\n \in \mathbb{Z}\\ m \ \text{and} \ n \ \text{even}}} \operatorname{Tr} \big(g^it|V^{00}_{\left(\frac{mn}{2}\right)}\big)\frac{p^{im}q^{in}}{i}
\\
\hphantom{Z}{} =- \sum_{i>0}\sum_{\substack{m \in \mathbb{Z}^{+}\\n \in \mathbb{Z}\\ m \ \text{or} \ n \ \text{odd}}} \operatorname{Tr} \big(g^i|V^{[1,mn]}_{\left(\frac{mn}{2}\right)}\big)\frac{p^{im}q^{in}}{i}\\
\hphantom{Z=}{}- \sum_{i>0}\sum_{\substack{m \in \mathbb{Z}^{+}\\n \in \mathbb{Z}\\ m \ \text{and} \ n \ \text{even}}} \left( \operatorname{Tr} \big(g^it|V^{\natural}_{\left(\frac{mn}{2}\right)}\big)- \operatorname{Tr} \big(g^it|V^{01}_{\left(\frac{mn}{2}\right)}\big)\right)\frac{p^{im}q^{in}}{i}
\\
\hphantom{Z}{} = - \sum_{i>0}\sum_{\substack{m \in \mathbb{Z}^{+}\\n \in \mathbb{Z}\\ m \ \text{or} \ n \ \text{odd}}} \operatorname{Tr}\big(g^i|V^{[1,mn]}_{\left(\frac{mn}{2}\right)}\big)\frac{p^{im}q^{in}}{i}\\
\hphantom{Z=}{} - \sum_{i>0}\sum_{\substack{m \in \mathbb{Z}^{+}\\n \in \mathbb{Z}\\ m \ \text{and} \ n \ \text{even}}} \left( \operatorname{Tr} \big(g^it|V^{\natural}_{\left(\frac{mn}{2}\right)}\big)+ \operatorname{Tr} \big(g^i|V^{01}_{\left(\frac{mn}{2}\right)}\big)\right)\frac{p^{im}q^{in}}{i}
\\
\hphantom{Z}{}=- \sum_{i>0}\sum_{\substack{m \in \mathbb{Z}^{+}\\n \in \mathbb{Z}}} \operatorname{Tr} \big(g^i|V^{[1,mn]}_{\left(\frac{mn}{2}\right)}\big)\frac{p^{im}q^{in}}{i}- \sum_{i>0}\sum_{\substack{m \in \mathbb{Z}^{+}\\n \in \mathbb{Z}}} \operatorname{Tr} \big(g^it|V^{\natural}_{(2mn)}\big)\frac{p^{2im}q^{2in}}{i}
\\
\hphantom{Z}{} =- \sum_{n=1}^{+\infty}\frac{1}{n}\left(\sum_{ad=n}d\cdot\sum_{k\in\mathbb{Z}} \operatorname{Tr} \big(g^a|V_{\left(\frac{kd}{2}\right)}^{[1,kd]}\big) q^{ak} + \sum_{\substack{ad=n\\ d \ \text{even} }}d\cdot\sum_{k\in\mathbb{Z}} \operatorname{Tr} \big(g^at|V_{\left(\frac{kd}{2}\right)}^{\natural}\big)q^{2ak}\right)p^{n}
\\
\hphantom{Z}{} =
- \sum_{n=1}^{+\infty}\frac{1}{n}\left(\sum_{\substack{ad=n\\0\leq b<d}} T_{t,g^a}\left(\frac{a\tau+b}{d}\right) +\sum_{\substack{ad=n \\ d \ \text{even} \\ 0 \leq b <d}} T_{1,g^at}\left(\frac{2a\tau+b}{d}\right)\right)p^{n}.
\end{gather*}

Since
\begin{gather*}T_{t,g}(p)-T_{t,g}(q)=p^{-1}\exp\left(- \sum_{n=1}^{+\infty}\frac{1}{n} P_{n}(T_{t,g}(q))p^n\right),\end{gather*}
where $P_n$ is the $n$-th Faber polynomial, we conclude that, for all $n\in \mathbb{N}$,
 \begin{gather*}\sum_{\substack{ad=n\\0\leq b<d}} T_{t,g^a}\left(\frac{a\tau+b}{d}\right) +\sum_{\substack{ad=n \\d \ \text{even} \\ 0 \leq b <d}} T_{1,g^at}\left(\frac{2a\tau+b}{d}\right)=P_{n}(T_{t,g}(q)),\end{gather*}
and we get that $T_{t,g}$ is \tp-replicable with \tp-replicates given as stated in the theorem.

By substituting $g$ by $g^n$ we see that $T_{t,g^n}$ is \tp-replicable with replicates $T_{t,g^n}^{[m]}=T_{t,g^{mn}}=T_{t,g}^{[mn]}$ and $T_{t,g^n}^{[\sqrt{2}m]}=T_{1,g^{mn}t}=T_{t,g}^{[\sqrt{2}nm]}$, i.e. $\big(T_{t,g}^{[n]}\big)^{[m]}=T_{t,g}^{[mn]}$ and $\big(T_{t,g}^{[n]}\big)^{[\sqrt{2}m]}=T_{t,g}^{[\sqrt{2}nm]}$. To complete the proof it remains to see that $T_{t,g}^{[\sqrt{2}n]}=T_{1,g^nt}$ is \tp-replicable with replicates $\big(T_{t,g}^{[\sqrt{2}n]}\big)^{[m]}=T_{1,g^{nm}t}$ and $\big(T_{t,g}^{[\sqrt{2}n]}\big)^{[\sqrt{2}m]}=T_{t,g^{2nm}}$, for $m\in\nat$. Equivalently, what we need to prove is that, for every $m\in \nat$,
\begin{gather*}
\sum_{\substack{ad=n\\0\leq b<d}} T_{1,g^{ma}t}\left(\frac{a\tau+b}{d}\right) +\sum_{\substack{ad=n \\ d \ \text{even} \\ 0 \leq b <d}} T_{t,g^{2ma}}\left(\frac{2a\tau+b}{d}\right)=P_{n}\big(T_{1,g^{m}t}(q)\big).
\end{gather*}

But since $T_{1,g^{m}t}$ is a monstrous function we know that
\begin{gather*}
\sum_{\substack{ad=n\\0\leq b<d}} T_{1,(g^{m}t)^{a}}\left(\frac{a\tau+b}{d}\right)=P_{n}\big(T_{1,g^{m}t}(q)\big).
\end{gather*}

But now,
\begin{gather*}\sum_{\substack{ad=n\\0\leq b<d}} T_{1,g^{ma}t}\left(\frac{a\tau+b}{d}\right)=n \sum_{k\in\nat}\left(\sum_{a|(n,k)}\frac{1}{a}\operatorname{Tr}\big(g^{ma}t|V^\natural_{\left(\frac{nk}{a^2}\right)}\big)\right)q^k,\\
\sum_{\substack{ad=n \\d \ \text{even} \\ 0 \leq b <d}} T_{t,g^{2ma}}\left(\frac{2a\tau+b}{d}\right)=n \sum_{k\in\nat}\left(\sum_{\substack{a|(n,k)\\ \frac{n}{a} \ \text{even}}}\frac{1}{a}\operatorname{Tr}\big(g^{2ma}|V^{[1,\frac{kn}{a^2}]}_{\left(\frac{nk}{2a^2}\right)}\big)\right)q^{2k},\\
\sum_{\substack{ad=n\\ 0\leq b<d}} T_{1,(g^{m}t)^{a}}\left(\frac{a\tau+b}{d}\right)=n \sum_{k\in\nat}\left(\sum_{a|(n,k)}\frac{1}{a}\operatorname{Tr}\big((g^mt)^a|V^\natural_{\left(\frac{nk}{a^2}\right)}\big)\right)q^k,
\end{gather*}
and what we have to show is
\begin{gather*}
 \sum_{k\in\nat}\sum_{a|(n,k)}\frac{1}{a}\operatorname{Tr}\big(g^{ma}t|V^\natural_{\left(\frac{nk}{a^2}\right)}\big)q^k+ \sum_{k\in\nat}\sum_{\substack{a|(n,k)\\ \frac{n}{a} \ \text{even}}}\frac{1}{a}\operatorname{Tr}\big(g^{2ma}|V^{[1,\frac{kn}{a^2}]}_{\left(\frac{nk}{2a^2}\right)}\big)q^{2k}\\
\qquad{} = \sum_{k\in\nat}\sum_{a|(n,k)}\frac{1}{a}\operatorname{Tr}\big((g^mt)^a|V^\natural_{\left(\frac{nk}{a^2}\right)}\big)q^k.
\end{gather*}

This means that for $k$ odd we have to show that
\begin{gather*}
 \sum_{a|(n,k)}\frac{1}{a}\operatorname{Tr}\big(g^{ma}t|V^\natural_{\left(\frac{nk}{a^2}\right)}\big)= \sum_{a|(n,k)}\frac{1}{a}\operatorname{Tr}\big((g^mt)^a|V^\natural_{\left(\frac{nk}{a^2}\right)}\big),
\end{gather*}
which is true because $t$ and $g$ commute and every $a$ in the sum, being a divisor of $k$, is odd too.

For $k=2k'$ even we have to show that
\begin{gather*}
 \sum_{a|(n,2k')}\frac{1}{a}\operatorname{Tr}\big(g^{ma}t|V^\natural_{\left(\frac{2nk'}{a^2}\right)}\big)+
 \sum_{\substack{a|(n,k')\\ \frac{n}{a} \ \text{even}}}\frac{1}{a}\operatorname{Tr}\big(g^{2ma}|V^{[1,\frac{k'n}{a^2}]}_{\left(\frac{nk'}{2a^2}\right)}\big)\\
\qquad{} = \sum_{a|(n,2k')}\frac{1}{a}\operatorname{Tr}\big((g^mt)^a|V^\natural_{\left(\frac{2nk'}{a^2}\right)}\big).
\end{gather*}

This identity is clearly true for $n$ odd and for $n=2n'$ even it becomes, because of the isomorphism between $V^{01}$ and $V^{10}$,
\begin{gather*}
 \sum_{a|2(n',k')}\frac{1}{a}\operatorname{Tr}\big(g^{ma}t|V^\natural_{\left(\frac{4n'k'}{a^2}\right)}\big)+
 \sum_{\substack{a|(2n',k')\\ \frac{2n'}{a} \ \text{even}}}\frac{1}{a}\operatorname{Tr}\big(g^{2ma}|V^{01}_{\left(\frac{n'k'}{a^2}\right)}\big)\\
 \qquad{} = \sum_{a|2(n',k')}\frac{1}{a}\operatorname{Tr}\big((g^mt)^a|V^\natural_{\left(\frac{4n'k'}{a^2}\right)}\big)
\end{gather*}
and this is now easy to prove
\begin{gather*}
 \sum_{a|2(n',k')}\frac{1}{a}\operatorname{Tr}\big(g^{ma}t|V^\natural_{\left(\frac{4n'k'}{a^2}\right)}\big)+
 \sum_{\substack{a|(2n',k')\\ \frac{2n'}{a} \ \text{even}}}\frac{1}{a}\operatorname{Tr}\big(g^{2ma}|V^{01}_{\left(\frac{n'k'}{a^2}\right)}\big)
\\
\qquad {}= \sum_{a|2(n',k')}\frac{1}{a}\operatorname{Tr}\big(g^{ma}t|V^\natural_{\left(\frac{4n'k'}{a^2}\right)}\big)+
 \sum_{a|(n',k')}\frac{1}{a}\operatorname{Tr}\big(g^{2ma}|V^{01}_{\left(\frac{4n'k'}{(2a)^2}\right)}\big)
\\
\qquad {}= \sum_{a|2(n',k')}\frac{1}{a}\operatorname{Tr}\big(g^{ma}t|V^\natural_{\left(\frac{4n'k'}{a^2}\right)}\big)
- \sum_{\substack{a|2(n',k')\\ a\ \text{even}}}\frac{2}{a}\operatorname{Tr}\big(g^{ma}t|V^{01}_{\left(\frac{4n'k'}{a^2}\right)}\big)
\\
\qquad {}= \sum_{\substack{a|2(n',k')}}\frac{1}{a}\operatorname{Tr}\big(g^{ma}t|V^{00}_{\left(\frac{4n'k'}{a^2}\right)}\big)
 + \sum_{\substack{a|2(n',k')\\ a \ \text{even}}}\frac{1}{a}\operatorname{Tr}\big(g^{ma}t|V^{01}_{\left(\frac{4n'k'}{a^2}\right)}\big)\\
\qquad \quad {} + \sum_{\substack{a|2(n',k')\\ a \ \text{odd}}}\frac{1}{a}\operatorname{Tr}\big(g^{ma}t|V^{01}_{\left(\frac{4n'k'}{a^2}\right)}\big)
- \sum_{\substack{a|2(n',k')\\ a\ \text{even}}}\frac{2}{a}\operatorname{Tr}\big(g^{ma}t|V^{01}_{\left(\frac{4n'k'}{a^2}\right)}\big)
\\
\qquad {}= \sum_{\substack{a|2(n',k')}}\frac{1}{a}\operatorname{Tr}\big((g^{m}t)^{a}|V^{00}_{\left(\frac{4n'k'}{a^2}\right)}\big)
+ \sum_{\substack{a|2(n',k')\\ a \ \text{even}}}\frac{1}{a}\operatorname{Tr}\big((g^{m}t)^a|V^{01}_{\left(\frac{4n'k'}{a^2}\right)}\big)\\
\qquad \quad {} + \sum_{\substack{a|2(n',k')\\ a \ \text{odd}}}\frac{1}{a}\operatorname{Tr}\big((g^{m}t)^a|V^{01}_{\left(\frac{4n'k'}{a^2}\right)}\big)= \sum_{a|2(n',k')}\frac{1}{a}\operatorname{Tr}\big((g^mt)^a|V^\natural_{\left(\frac{4n'k'}{a^2}\right)}\big),
\end{gather*}
and the theorem is proven.
\end{proof}

We can now use Theorem~\ref{maintheorem} to reprove H\"ohn's result

\begin{Theorem}\label{Hoenstheorem}
The McKay--Thompson series $T_{t,g}(z)$, for $g\in2\cdot\mathbb{B}$, are the $q$-expansions of Hauptmoduls.
\end{Theorem}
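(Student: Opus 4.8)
The plan is to reproduce, in the $(2+)$-setting, the final comparison step of Borcherds' monstrous argument, combining three ingredients already in hand. By Theorem~\ref{maintheorem} each $T_{t,g}$ is completely $(2+)$-replicable, with replicates dictated by the power-map structure of $2\cdot\mathbb{B}$ and of $\mathbb{M}$, namely $T_{t,g}^{[n]}=T_{t,g^n}$ and $T_{t,g}^{[\sqrt{2}n]}=T_{1,g^n t}$; in particular the $[\sqrt{2}]$-replicates are monstrous McKay--Thompson series and hence already known to be Hauptmoduls. On the other side, H\"ohn assigns to each conjugacy class of $2\cdot\mathbb{B}$ a candidate genus-zero group, and for all but $13$ of these classes the candidate Hauptmodul occurs in Tables~\ref{Table1} and~\ref{Table2}; by Corollary~\ref{subcolumn} it is therefore completely $(2+)$-replicable, with an explicit replicate structure read off from the tables. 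Finally, Theorem~\ref{MahlerRec} provides the recurrences that express every coefficient of a completely $(2+)$-replicable function in terms of finitely many low-order coefficients of that function and of its $[\sqrt{2}]$- and $[2]$-replicates, so that an entire replication-closed family is pinned down by the first five coefficients of each of its members.

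First I would fix, for each non-exceptional class $g$, the candidate Hauptmodul $H_g$ supplied by H\"ohn and check that the two replication structures agree: the assignments $H_g\mapsto H_g^{[\sqrt{2}]}$ and $H_g\mapsto H_g^{[2]}$ recorded in the tables must match, class by class, the power maps $g\mapsto gt$ and $g\mapsto g^2$ governing the replicates of $T_{t,g}$. This is a finite bookkeeping check, matching the columns of Tables~\ref{Table1}--\ref{Table2} against H\"ohn's list of groups together with the $2\cdot\mathbb{B}$ power maps. With the structures identified, I would compute the first five $q$-coefficients of $T_{t,g}$ from the graded traces and of $H_g$ from the known expansion of the Hauptmodul, and verify that they coincide for every member of this finite family. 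Because both families are completely $(2+)$-replicable with the same replicate structure, Theorem~\ref{MahlerRec} then forces $T_{t,g}=H_g$ in all coefficients: the replicate coefficients on the right-hand side of each recurrence occur at strictly smaller index, so a simultaneous induction over the whole family propagates the agreement from the first five coefficients to all of them. This settles every class outside the $13$ exceptions.

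For the $13$ remaining classes the candidate Hauptmodul does not appear in the tables, so complete $(2+)$-replicability of $H_g$ is not available a priori. Here I would follow H\"ohn's original argument but shorten it using the extra structure now available: $T_{t,g}$ is completely $(2+)$-replicable by Theorem~\ref{maintheorem} and therefore satisfies the recurrences of Theorem~\ref{MahlerRec}, which furnish relations among its coefficients beyond those H\"ohn had. Matching a correspondingly small number of low-order coefficients against the candidate Hauptmodul, and using these recurrences to generate the rest, determines the series; as a byproduct each of these $13$ series is seen to be $(2+)$-replicable.

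The main obstacle I anticipate lies in the closure of the recurrence system rather than in any single computation. Each recurrence of Theorem~\ref{MahlerRec} calls on replicate coefficients, which belong to other functions of the family, and it is precisely the global compatibility of the table-based replicate structure with the power-map structure of $2\cdot\mathbb{B}$ and the involution $t$ that guarantees those coefficients are already determined when they are needed, so that the simultaneous induction is valid. Verifying this compatibility uniformly, and disposing of the $13$ exceptions that fall outside it, are the two points requiring genuine care.
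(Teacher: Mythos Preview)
Your proposal is essentially the paper's own argument, and the overall structure is correct: complete $(2+)$-replicability on both sides, match the replicate structures via the tables, compare five initial coefficients, and let the recurrences of Theorem~\ref{MahlerRec} propagate equality through the whole family for the $234$ non-exceptional classes.

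The one place where you are less precise than the paper is the treatment of the $13$ exceptions. You say you would ``match a correspondingly small number of low-order coefficients'' and let the recurrences ``determine the series'', but that is not quite enough as stated: for these classes the candidate Hauptmodul is \emph{not} known to be completely $(2+)$-replicable, so you cannot run the two-sided recurrence comparison. What the paper actually does is use the $(2+)$-recurrences on the $T_{t,g}$ side only, to compute the first $23$ coefficients of $T_{t,g}$; then it invokes Remark~\ref{equivrep2Arep} ($(2+)$-replicable $\Rightarrow$ replicable) together with the standard fact that an ordinary replicable function is determined by its first $23$ coefficients, and finishes by a lookup among the $616$ rational Hauptmoduls. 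You should name this ``$23$-coefficient'' step explicitly, since it is the mechanism that closes the argument for the exceptions.
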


\begin{proof}We know that the McKay--Thompson series $T_{t,g}$ are completely \tp-replicable and consequently satisfy the recurrence relations from Theorem~\ref{MahlerRec}. We know that $T^{[n\sqrt{2}]}_{t,g}=T_{1,tg^{n}}$ is a monstrous function and therefore its coefficients are known once we know in what class in the monster the element~$tg$ is, for every $g\in 2\cdot\mathbb{B}$. This can be done with GAP~\cite{GAP4}. Hence, the first five coefficients of every $T_{t,g}$ determine all the coefficients of the $T_{t,g}$ completely. From Section~\ref{CRHauptmoduls} we also have some completely \tp-replicability results for some Hauptmoduls and thus these Hauptmoduls satisfy the same recurrence relations from Theorem~\ref{MahlerRec}. To prove that every McKay--Thompson series is a Hauptmodul it would be enough to compare, for every $g\in 2\cdot\mathbb{B}$, the first five coefficients of $T_{t,g}, T_{1,tg}, T_{t,g^2}, T_{1,tg^2},\ldots$ with those of $f, f^{[\sqrt{2}]}, f^{[2]}, f^{[2\sqrt{2}]},\ldots$, respectively, for some Hauptmodul $f$ in Tables~\ref{Table1} and~\ref{Table2}.

However, not all McKay--Thompson series correspond to Hauptmoduls listed in Tables~\ref{Table1} and~\ref{Table2} and, because of that, this method for proving that the McKay--Thompson series are Hauptmoduls works for all 247 classes in $2\cdot \mathbb{B}$ with 13 exceptions. This happens because the Hauptmodul associated to each of these 13 classes is neither a completely replicable function nor a dash (see \cite{FMN} for the definition of the dash operator) of a completely replicable function and our Tables~\ref{Table1} and~\ref{Table2} only contain such functions.

\begin{table}[t!]
\centering
\scalebox{0.8}{
\begin{tabular}{|c|c|c||c|c|c|}
\hline
\mbox{Class of $g$ (of $g^2$)} & $T_{t,g}$ & $T^{[\sqrt{2}]}_{t,g}=T_{1,gt}$ & \mbox{Class of $g$ (of $g^2$)} & $T_{t,g}$ & $T^{[\sqrt{2}]}_{t,g}=T_{1,gt}$ \\\hline
$1a\:(1a) $ & $ 2A $ & $ 2A $ & $ 2a\:(1a) $ & $ 4\sim b $ & $ 1A $ \\\hline
$2b\:(1a) $ & $ 2a $ & $ 2A $ & $ 2c\:(1a) $ & $ 4A $ & $ 2B $ \\\hline
$2d\:(1a) $ & $ 2B $ & $ 2A $ & $ 2e\:(1a) $ & $ 4C $ & $ 2B $ \\\hline
$3a\:(3a) $ & $ 6A $ & $ 6A $ & $ 3b\:(3b) $ & $ 6D $ & $ 6D $ \\\hline
$4a\:(2a) $ & $ 8\sim b $ & $ 4B $ & $ 4b\:(2d) $ & $ 4a $ & $ 4A $ \\\hline
$4c\:(2d) $ & $ 4B $ & $ 4A $ & $ 4d\:(2d) $ & $ 4C $ & $ 4C $ \\\hline
$4e\:(2c) $ & $ 8a $ & $ 4B $ & $ 4f\:(2e) $ & $ 8A $ & $ 4C $ \\\hline
$4g\:(2e) $ & $ 8\sim d $ & $ 4A $ & $ 4h\:(2d) $ & $ 4D $ & $ 4C $ \\\hline
$4i\:(2c) $ & $ 8B $ & $ 4B $ & $ 4j\:(2e) $ & $ 8E $ & $ 4C $ \\\hline
$4k\:(2e) $ & $ 8D $ & $ 4D $ & $ 5a\:(5a) $ & $ 10A $ & $ 10A $ \\\hline
$5b\:(5b) $ & $ 10C $ & $ 10C $ & $ 6a\:(3a) $ & $ 12\sim d $ & $ 3A $ \\\hline
$6b\:(3b) $ & $ 12\sim f $ & $ 3B $ & $ 6c\:(3a) $ & $ 6a $ & $ 6A $ \\\hline
$6d\:(3a) $ & $ 6b $ & $ 6A $ & $ 6e\:(3a) $ & $ 12A $ & $ 6C $ \\\hline
$6f\:(3a) $ & $ 6C $ & $ 6A $ & $ 6g\:(3b) $ & $ 6c $ & $ 6D $ \\\hline
$6h\:(3a) $ & $ 12c $ & $ 6C $ & $ 6i\:(3b) $ & $ 12B $ & $ 6E $ \\\hline
$6j\:(3b) $ & $ 6E $ & $ 6D $ & $ 6k\:(3a) $ & $ 12E $ & $ 6C $ \\\hline
$6l\:(3b) $ & $ 12H $ & $ 6E $ & $ 6m\:(3b) $ & $ 12\sim h $ & $ 6B $ \\\hline
$6n\:(3b) $ & $ 12I $ & $ 6E $ & $ 7a\:(7a) $ & $ 14A $ & $ 14A $ \\\hline
$8a\:(4a) $ & $ 16\sim a $ & $ 8C $ & $ 8b\:(4d) $ & $ 8a $ & $ 8A $ \\\hline
$8c\:(4c) $ & $ 8b $ & $ 8B $ & $ 8d\:(4c) $ & $ 8c $ & $ 8B $ \\\hline
$8e\:(4d) $ & $ 8B $ & $ 8A $ & $ 8f\:(4c) $ & $ 8C $ & $ 8B $ \\\hline
$8g\:(4d) $ & $ 8D $ & $ 8E $ & $ 8h\:(4g) $ & $ 16\sim d $ & $ 8D $ \\\hline
$8i\:(4d) $ & $ 8E $ & $ 8E $ & $ 8j\:(4h) $ & $ 8F $ & $ 8D $ \\\hline
$8k\:(4f) $ & $ 16A $ & $ 8B $ & $ 8l\:(4j) $ & $ 16C $ & $ 8E $ \\\hline
$8m\:(4j) $ & $ 16\sim e $ & $ 8A $ & $ 8n\:(4i) $ & $ 16a $ & $ 8C $ \\\hline
$8o\:(4j) $ & $ 16B $ & $ 8E $ & $ 8p\:(4k) $ & $ 16d $ & $ 8F $ \\\hline
$9a\:(9a) $ & $ 18A $ & $ 18A $ & $ 9b\:(9b) $ & $ 18B $ & $ 18B $ \\\hline
$10a\:(5a) $ & $ 20\sim c $ & $ 5A $ & $ 10b\:(5b) $ & $ 20\sim d $ & $ 5B $ \\\hline
$10c\:(5a) $ & $ 10a $ & $ 10A $ & $ 10d\:(5a) $ & $ 20A $ & $ 10B $ \\\hline
$10e\:(5a) $ & $ 10B $ & $ 10A $ & $ 10f\:(5b) $ & $ 20C $ & $ 10E $ \\\hline
$10g\:(5b) $ & $ 10E $ & $ 10C $ & $ 10h\:(5a) $ & $ 20d $ & $ 10B $ \\\hline
$10i\:(5b) $ & $ 20F $ & $ 10E $ & $ 10j\:(5b) $ & $ 20\sim g $ & $ 10D $ \\\hline
$11a\:(11a) $ & $ 22A $ & $ 22A $ & $ 12a\:(6a) $ & $ 24\sim f $ & $ 12C $ \\\hline
$12b\:(6b) $ & $ 24\sim h $ & $ 12G $ & $ 12c\:(6f) $ & $ 12a $ & $ 12A $ \\\hline
$12d\:(6j) $ & $ 12G $ & $ 12B $ & $ 12e\:(6f) $ & $ 12b $ & $ 12A $ \\\hline
$12f\:(6f) $ & $ 12C $ & $ 12A $ & $ 12g\:(6e) $ & $ 24a $ & $ 12C $ \\\hline
$12h\:(6m) $ & $ 24\sim j $ & $ 12H $ & $ 12i\:(6m) $ & $ 24\sim k $ & $ 12B $ \\\hline
$12j\:(6f) $ & $ 12E $ & $ 12E $ & $ 12k\:(6e) $ & $ 24b $ & $ 12C $ \\\hline
$12l\:(6f) $ & $ 12d $ & $ 12E $ & $ 12m\:(6k) $ & $ 24B $ & $ 12E $ \\\hline
$12n\:(6k) $ & $ 24\sim m $ & $ 12A $ & $ 12o\:(6i) $ & $ 24c $ & $ 12G $ \\\hline
$12p\:(6e) $ & $ 24A $ & $ 12C $ & $ 12q\:(6j) $ & $ 12I $ & $ 12I $ \\\hline
$12r\:(6n) $ & $ 24C $ & $ 12I $ & $ 12s\:(6n) $ & $ 24\sim o $ & $ 12B $ \\\hline
$12t\:(6j) $ & $ 12F $ & $ 12H $ & $ 12u\:(6k) $ & $ 24h $ & $ 12E $ \\\hline
$12v\:(6m) $ & $ 24\sim q $ & $ 12I $ & $ 12w\:(6l) $ & $ 24H $ & $ 12F $ \\\hline
$12x\:(6n) $ & $ 24I $ & $ 12I $ & $ 12y\:(6n) $ & $ 24\sim r $ & $ 12H $ \\\hline
$13a\:(13a) $ & $ 26A $ & $ 26A $ & $ 14a\:(7a) $ & $ 28\sim c $ & $ 7A $ \\\hline
$14b\:(7a) $ & $ 14a $ & $ 14A $ & $ 14c\:(7a) $ & $ 14c $ & $ 14A $ \\\hline
$14d\:(7a) $ & $ 28B $ & $ 14B $ & $ 14e\:(7a) $ & $ 14B $ & $ 14A $ \\\hline
$14f\:(7a) $ & $ 28C $ & $ 14B $ & $ 15a\:(15a) $ & $ 30B $ & $ 30B $ \\\hline
$15b\:(15b) $ & $ 30F $ & $ 30F $ & $ 16a\:(8e) $ & $ 16b $ & $ 16A $ \\\hline
$16b\:(8e) $ & $ 16c $ & $ 16A $ & $ 16c\:(8i) $ & $ 16B $ & $ 16B $ \\\hline
$16d\:(8i) $ & $ 16A $ & $ 16C $ & $ 16e\:(8e) $ & $ 16a $ & $ 16A $ \\\hline
$16f\:(8i) $ & $ 16A $ & $ 16C $ & $ 16g\:(8l) $ & $ 32B $ & $ 16A $ \\\hline
$16h\:(8o) $ & $ 32A $ & $ 16B $ & $ 16i\:(8o) $ & $ 32\sim e $ & $ 16C $ \\\hline
$17a\:(17a) $ & $ 34A $ & $ 34A $ & $ 18a\:(9a) $ & $ 36\sim h $ & $ 9B $ \\\hline
$18b\:(9b) $ & $ 36\sim e $ & $ 9A $ & $ 18c\:(9b) $ & $ 18c $ & $ 18B $ \\\hline
$18d\:(9b) $ & $ 18c $ & $ 18B $ & $ 18e\:(9b) $ & $ 36A $ & $ 18C $ \\\hline
\end{tabular}}
\end{table}

\begin{table}[t!]
\centering
\scalebox{0.8}{
\begin{tabular}{|c|c|c||c|c|c|}
\hline
\mbox{Class of $g$ (of $g^2$)} & $T_{t,g}$ & $T^{[\sqrt{2}]}_{t,g}=T_{1,gt}$ & \mbox{Class of $g$ (of $g^2$)} & $T_{t,g}$ & $T^{[\sqrt{2}]}_{t,g}=T_{1,gt}$ \\\hline
$18f\:(9b) $ & $ 18C $ & $ 18B $ & $ 18g\:(9b) $ & $ 36f $ & $ 18C $ \\\hline
$18h\:(9a) $ & $ 36D $ & $ 18D $ & $ 18i\:(9a) $ & $ 36\sim q $ & $ 18E $ \\\hline
$19a\:(19a) $ & $ 38A $ & $ 38A $ & $ 20a\:(10a) $ & $ 40\sim c $ & $ 20B $ \\\hline
$20b\:(10e) $ & $ 20a $ & $ 20A $ & $ 20c\:(10g) $ & $ 20c $ & $ 20C $ \\\hline
$20d\:(10e) $ & $ 20b $ & $ 20A $ & $ 20e\:(10e) $ & $ 20B $ & $ 20A $ \\\hline
$20f\:(10d) $ & $ 40a $ & $ 20B $ & $ 20g\:(10d) $ & $ 40B $ & $ 20B $ \\\hline
$20h\:(10j) $ & $ 40\sim h $ & $ 20F $ & $ 20i\:(10j) $ & $ 40\sim i $ & $ 20C $ \\\hline
$20j\:(10g) $ & $ 20E $ & $ 20F $ & $ 20k\:(10i) $ & $ 40C $ & $ 20E $ \\\hline
$21a\:(21a) $ & $ 42A $ & $ 42A $ & $ 22a\:(11a) $ & $ 44\sim b $ & $ 11A $ \\\hline
$22b\:(11a) $ & $ 22a $ & $ 22A $ & $ 22c\:(11a) $ & $ 22a $ & $ 22A $ \\\hline
$22d\:(11a) $ & $ 44A $ & $ 22B $ & $ 22e\:(11a) $ & $ 22B $ & $ 22A $ \\\hline
$23a\:(23a) $ & $ 46C $ & $ 46C $ & $ 23b\:(23b) $ & $ 46C $ & $ 46C $ \\\hline
$24a\:(12b) $ & $ 48\sim c $ & $ 24G $ & $ 24b\:(12f) $ & $ 24d $ & $ 24A $ \\\hline
$24c\:(12f) $ & $ 24e $ & $ 24A $ & $ 24d\:(12f) $ & $ 24g $ & $ 24A $ \\\hline
$24e\:(12f) $ & $ 24f $ & $ 24A $ & $ 24f\:(12j) $ & $ 24b $ & $ 24B $ \\\hline
$24g\:(12q) $ & $ 24c $ & $ 24C $ & $ 24h\:(12j) $ & $ 24A $ & $ 24B $ \\\hline
$24i\:(12i) $ & $ 48\sim h $ & $ 24H $ & $ 24j\:(12n) $ & $ 48\sim i $ & $ 24D $ \\\hline
$24k\:(12m) $ & $ 48A $ & $ 24A $ & $ 24l\:(12q) $ & $ 24H $ & $ 24I $ \\\hline
$24m\:(12v) $ & $ 48\sim j $ & $ 24I $ & $ 24n\:(12v) $ & $ 48\sim k $ & $ 24C $ \\\hline
$24o\:(12t) $ & $ 24F $ & $ 24H $ & $ 24p\:(12w) $ & $ 48h $ & $ 24F $ \\\hline
$25a\:(25a) $ & $ 50A $ & $ 50A $ & $ 26a\:(13a) $ & $ 52\sim c $ & $ 13A $ \\\hline
$26b\:(13a) $ & $ 26a $ & $ 26A $ & $ 27a\:(27a) $ & $ 54A $ & $ 54A $ \\\hline
$28a\:(14a) $ & $ 56\sim d $ & $ 28A $ & $ 28b\:(14e) $ & $ 28A $ & $ 28B $ \\\hline
$28c\:(14e) $ & $ 28C $ & $ 28C $ & $ 28d\:(14e) $ & $ 28a $ & $ 28B $ \\\hline
$28e\:(14d) $ & $ 56a $ & $ 28A $ & $ 28f\:(14f) $ & $ 56A $ & $ 28C $ \\\hline
$28g\:(14f) $ & $ 56\sim g $ & $ 28B $ & $ 30a\:(15a) $ & $ 60\sim c $ & $ 15A $ \\\hline
$30b\:(15b) $ & $ 60\sim l $ & $ 15C $ & $ 30c\:(15a) $ & $ 30a $ & $ 30B $ \\\hline
$30d\:(15a) $ & $ 30d $ & $ 30B $ & $ 30e\:(15a) $ & $ 60B $ & $ 30C $ \\\hline
$30f\:(15a) $ & $ 30C $ & $ 30B $ & $ 30g\:(15a) $ & $ 60a $ & $ 30C $ \\\hline
$30h\:(15b) $ & $ 60D $ & $ 30G $ & $ 30i\:(15b) $ & $ 60\sim m $ & $ 30A $ \\\hline
$30j\:(15b) $ & $ 60C $ & $ 30G $ & $ 30k\:(15b) $ & $ 30G $ & $ 30F $ \\\hline
$30l\:(15b) $ & $ 60C $ & $ 30G $ & $ 30m\:(15b) $ & $ 30G $ & $ 30F $ \\\hline
$31a\:(31a) $ & $ 62A $ & $ 62A $ & $ 31b\:(31b) $ & $ 62A $ & $ 62A $ \\\hline
$32a\:(16c) $ & $ 32B $ & $ 32A $ & $ 32b\:(16c) $ & $ 32B $ & $ 32A $ \\\hline
$32c\:(16d) $ & $ 32b $ & $ 32B $ & $ 32d\:(16d) $ & $ 32b $ & $ 32B $ \\\hline
$33a\:(33a) $ & $ 66A $ & $ 66A $ & $ 34a\:(17a) $ & $ 68\sim b $ & $ 17A $ \\\hline
$34b\:(17a) $ & $ 34a $ & $ 34A $ & $ 34c\:(17a) $ & $ 34a $ & $ 34A $ \\\hline
$35a\:(35a) $ & $ 70A $ & $ 70A $ & $ 36a\:(18b) $ & $ 72\sim c $ & $ 36C $ \\\hline
$36b\:(18f) $ & $ 36C $ & $ 36A $ & $ 36c\:(18e) $ & $ 72a $ & $ 36C $ \\\hline
$36d\:(18i) $ & $ 72\sim p $ & $ 36D $ & $ 36e\:(18i) $ & $ 72\sim q $ & $ 36B $ \\\hline
$38a\:(19a) $ & $ 76\sim b $ & $ 19A $ & $ 38b\:(19a) $ & $ 38a $ & $ 38A $ \\\hline
$38c\:(19a) $ & $ 38a $ & $ 38A $ & $ 39a\:(39a) $ & $ 78A $ & $ 78A $ \\\hline
$40a\:(20a) $ & $ 80\sim a $ & $ 40A $ & $ 40b\:(20e) $ & $ 40b $ & $ 40B $ \\\hline
$40c\:(20e) $ & $ 40c $ & $ 40B $ & $ 40d\:(20e) $ & $ 40A $ & $ 40B $ \\\hline
$40e\:(20g) $ & $ 80a $ & $ 40A $ & $ 40f\:(20i) $ & $ 80\sim e $ & $ 40C $ \\\hline
$40g\:(20i) $ & $ 80\sim e $ & $ 40C $ & $ 42a\:(21a) $ & $ 84\sim e $ & $ 21A $ \\\hline
$42b\:(21a) $ & $ 42a $ & $ 42A $ & $ 42c\:(21a) $ & $ 42b $ & $ 42A $ \\\hline
$44a\:(22e) $ & $ 44c $ & $ 44A $ & $ 44b\:(22e) $ & $ 44c $ & $ 44A $ \\\hline
$46a\:(23a) $ & $ 92\sim b $ & $ 23A $ & $ 46b\:(23b) $ & $ 92\sim b $ & $ 23A $ \\\hline
$46c\:(23c) $ & $ 92A $ & $ 46A $ & $ 46d\:(23a) $ & $ 46A $ & $ 46C $ \\\hline
$46e\:(23b) $ & $ 92A $ & $ 46A $ & $ 46f\:(23b) $ & $ 46A $ & $ 46C $ \\\hline
$47a\:(47a) $ & $ 94A $ & $ 94A $ & $ 47b\:(47b) $ & $ 94A $ & $ 94A $ \\\hline
$48a\:(24h) $ & $ 48a $ & $ 48A $ & $ 48b\:(24h) $ & $ 48b $ & $ 48A $ \\\hline
$50a\:(25a) $ & $ 100\sim c $ & $ 25A $ & $ 52a\:(26a) $ & $ 104\sim c $ & $ 52A $ \\\hline
$54a\:(27a) $ & $ 108\sim g $ & $ 27A $ & $ 55a\:(55a) $ & $ 110A $ & $ 110A $ \\\hline
$56a\:(28c) $ & $ 56a $ & $ 56A $ & $ 56b\:(28c) $ & $ 56a $ & $ 56A $ \\\hline
$60a\:(30a) $ & $ 120\sim d $ & $ 60A $ & $ 60b\:(30f) $ & $ 60b $ & $ 60B $ \\\hline
\end{tabular}}
\end{table}

\begin{table}[t!]
\centering
\scalebox{0.8}{
\begin{tabular}{|c|c|c||c|c|c|}
\hline
\mbox{Class of $g$ (of $g^2$)} & $T_{t,g}$ & $T^{[\sqrt{2}]}_{t,g}=T_{1,gt}$ & \mbox{Class of $g$ (of $g^2$)} & $T_{t,g}$ & $T^{[\sqrt{2}]}_{t,g}=T_{1,gt}$ \\\hline
$60c\:(30e) $ & $ 120a $ & $ 60A $ & $ 60d\:(30i) $ & $ 120\sim g $ & $ 60D $ \\\hline
$60e\:(30i) $ & $ 120 \sim h $ & $ 60C $ & $ 62a\:(31a) $ & $ 124\sim b $ & $ 31A $ \\\hline
$62b\:(31b) $ & $ 124\sim b $ & $ 31A $ & $ 66a\:(33a) $ & $ 132\sim c $ & $ 33B $ \\\hline
$66b\:(33a) $ & $ 66a $ & $ 66A $ & $ 66c\:(33a) $ & $ 66a $ & $ 66A $ \\\hline
$68a\:(34a) $ & $ 136\sim b $ & $ 68A $ & $ 70a\:(35a) $ & $ 140\sim b $ & $ 35A $ \\\hline
$70b\:(35a) $ & $ 70a $ & $ 70A $ & $ 70c\:(35a) $ & $ 70a $ & $ 70A $ \\\hline
$78a\:(39a) $ & $ 156\sim d $ & $ 39A $ & $ 84a\:(42a) $ & $ 168\sim c $ & $ 84A $ \\\hline
$94a\:(47a) $ & $ 188\sim b $ & $ 47A $ & $ 94b\:(47a) $ & $ 188\sim b $ & $ 47A $ \\\hline
$104a\:(52a) $ & $ 208\sim a $ & $ 104A $ & $ 104b\:(52a) $ & $ 208\sim a $ & $ 104A $ \\\hline
$110a\:(55a) $ & $ 220\sim b $ & $ 55A$ \\\cline{1-3}
\end{tabular}}
\caption{Classes in $2\cdot \mathbb{B}$ and corresponding McKay--Thompson series together with their $\big[\sqrt{2}\big]$-replicates.}\label{Table3}
\end{table}

For those $247-13=234$ classes covered by Tables~\ref{Table1} and~\ref{Table2} we use the decomposition of the first five head characters given in~\cite{H}:
\begin{itemize}\itemsep=0pt
 \item[1)] $H_1=\chi_1+\chi_2$,
 \item[2)] $H_2=\chi_{185}$,
 \item[3)] $H_3=2\chi_1+\chi_2+\chi_3+\chi_4$,
 \item[4)] $H_4=2\chi_{185}+\chi_{186}$,
 \item[5)] $H_5=3\chi_1+3\chi_2+2\chi_3+\chi_4+\chi_6+\chi_7$
\end{itemize}
to make the comparison and we obtain a proof of H\"ohn's results which is analogous to Borcherds proof of the original moonshine conjectures.

For the remaining 13 classes (their names are, using GAP notation: $12h$, $12i$, $20h$, $20i$, $24i$, $24m$, $24n$, $36d$, $36e$, $40f$, $40g$, $60d$, $60e$) we use again the recurrence relations from Theorem~\ref{MahlerRec} to find the first 23 coefficients of their McKay--Thompson
series. Since we know that a replicable function (we recall from Remark~\ref{equivrep2Arep} that a \tp-replicable function is replicable) is completely determined by its first 23 coefficients, a simple check among the power series expansions of the 616 Hauptmoduls for genus zero groups with rational integer coefficients allows us to match every such class with some Hauptmodul (see references \cite{ChCm, FMN,N3}
 for details of these functions and the corresponding groups). This is analogous to H\"ohn's proof but the recurrence relations from Theorem \ref{MahlerRec} simplify the computations greatly.
 \end{proof}

A list of $2\cdot \mathbb{B}$ classes with their corresponding McKay--Thompson series and $\left[\sqrt{2}\right]$-replicates is given in Table~\ref{Table3} (see~\cite{N3} for the labelling of the Hauptmoduls).

\section{Comments and questions}\label{commentsandquestions}

The results of the previous sections raise some natural questions. For example, to what extent can these results be extended to other groups arising as centralizers of elements in classes~$p+$ or~$p-$, for a prime~$p$, in the monster?

For groups of type $p+$ there is an obvious generalization of \tp-replication to $(p+)$-replica\-tion. Computationally we have found the $(p+)$-replicates of all of the rational replicable functions in Norton's list of 616 such functions~\cite{N3}. In all the cases we have checked, this replication structure respects the power map structure in the group arising as a centralizer of the element of class $p+$ in the monster. However, even if we could generalize all the results of the current paper, this would not be sufficient to give a separate proof akin to Borcherds' monstrous proof for each of these groups. In the first place, an explicit construction of each VOA-module would be required. However, more importantly, for $p>2$ there are Hauptmodul which have some irrational integer coefficients. There are two such Hauptmodul for $3\cdot Fi_{24}$. Although it is possible to write down the identities which are required in these cases, we have not found a~natural way to do so. The crux of the problem is that for the monster and $2\cdot\mathbb{B}$ a $p$-th replicate contains information about the $p$th power of an element~$g$ even when~$p$ divides the order of~$g$ and this information is contained in the essentially combinatorial definition of the $p$-replicate. No information about $g$ or the underlying action of~$g$ on the relevant VOA-module is needed. But when the corresponding Hauptmodul has irrational coefficients we have not found a way to extract this information (other than to look at the underlying VOA-module structure, of course). A solution to this problem for $3\cdot Fi_{24}$ would be very interesting. The classes in question are~$18m$ and~$18p$ and the Hauptmoduls are $54\sim c$ and $54\sim b$~\cite{N3}.

For groups of type $p-$ the situation is more complicated and we have found no natural way to choose an operator in the Hecke algebra that applied to the Hauptmodul for $p-$ yields the Faber polynomial while at the same time respecting the power map structure in corresponding group. This seems to be due to the fact that~$p-$, unlike~$p+$, has more than one cusp. A solution to this problem in the simplest case $2-$ would be very interesting.

We should also mention that complete replicability plays a key role in the work of Martin~\cite{Martin} on modularity of ``$j$-final'' functions and also in the work of Kozlov~\cite{Koz} and Cummins and Gannon~\cite{CG} on modularity from formal modular equations. The aim of these papers was to see if the identities implied by complete replicability imply the modularity of Thompson--McKay series thus avoiding the part of Borcherds' proof which shows modularity by recurrence relations and a comparison of initial terms. This approach was used by Carnahan in his work on generalized moonshine. We believe it would be of interest to extend these results to complete \tp-replicability, but we have not done so here.

\newpage
\subsection*{Acknowledgements}

The authors would like to thank the referees for their valuable comments and suggestions to improve the quality of the paper. The second author acknowledges financial assistance from the Center for Mathematics of the University of Coimbra—UID/MAT/00324/2013, funded by the Portuguese Government through FCT/MCTES and co-funded by the European Regional Development Fund through the Partnership Agreement PT2020.

\pdfbookmark[1]{References}{ref}
\LastPageEnding

\end{document}